\documentclass[10pt,journal,letterpaper]{IEEEtran}
\usepackage{array}
\newcolumntype{M}[1]{>{\centering\arraybackslash}m{#1}}
\newcolumntype{N}{@{}m{0pt}@{}}

\usepackage{subfigure}
\usepackage{amssymb, amsmath, amsfonts}
\usepackage{empheq}
\usepackage{stfloats}
\usepackage{caption, cite}
\usepackage{graphicx}

\usepackage{algorithm}
\usepackage{algorithmic}

\usepackage{amsthm}
\usepackage{tikz}
\usepackage{tikzscale}
\usepackage{amsmath}
\allowdisplaybreaks
\usetikzlibrary{shadows,arrows,positioning}
\pgfdeclarelayer{background}
\pgfdeclarelayer{foreground}
\pgfsetlayers{background,main,foreground}

\DeclareMathOperator{\diag}{diag}
\DeclareMathOperator{\range}{Range}

\usetikzlibrary{external}
\usepackage{pgfplots}
\usepackage{cases}

\newtheorem{theorem}{Theorem}
\newtheorem{definition}{Definition}
\newtheorem{lemma}{Lemma}

\newtheorem{remark}{Remark}
\newtheorem{assumption}{Assumption}

\newlength\figureheight
\newlength\figurewidth

\DeclareFontFamily{OT1}{pzc}{}
\DeclareFontShape{OT1}{pzc}{m}{it}{<-> s * [1.000] pzcmi7t}{}
\DeclareMathAlphabet{\mathpzc}{OT1}{pzc}{m}{it}

\newcommand{\R}{{\mathbb{R}}}
\newcommand{\E}{{\mathbb{E}}}

\newcommand{\vv}{{\mathbf{v}}}

\newcommand{\bv}{{\mathbf{\bar{v}}}}
\newcommand{\x}{{\mathbf{x}}}
\newcommand{\g}{{\mathbf{g}}}
\newcommand{\bg}{{\mathbf{\bar{g}}}}
\newcommand{\bx}{{\mathbf{\bar{x}}}}
\newcommand{\hx}{{\mathbf{\hat{x}}}}

\newcommand{\lf}{{\nabla{f}}}

\newcommand{\lb}{{\bar{\lambda}}}

\newcommand{\bl}{{\mathbf{L}}}
\newcommand{\bi}{{\mathbf{I}}}
\newcommand{\bk}{{\mathbf{K}}}
\newcommand{\bp}{{\mathbf{P}}}
\newcommand{\bh}{{\mathbf{H}}}

\newcommand\addtag{\refstepcounter{equation}\tag{\theequation}}

\makeatletter

\newcommand{\Rmnum}[1]{\expandafter\@slowromancap\romannumeral #1@}
\makeatother

\title{ \hspace*{\fill} \\\hspace*{\fill} \\ \LARGE \bf{Compressed Distributed Stochastic Nonconvex Optimization with Differential Privacy}}

 \author{Antai Xie, Xiaoqiang Ren, Xinlei Yi, Tao Yang, and Xiaofan Wang
 \thanks{A. Xie, X. Ren, and X. Wang are with the School of Mechatronic Engineering and Automation, Shanghai University, Shanghai, China. Emails: \{xatai,\,xqren,\,xfwang\}@shu.edu.cn.}
 \thanks{X. Yi is with the Shanghai Institute of Intelligent Science and Technology, Tongji University, Shanghai, 201210, China. Email: xinleiyi@tongji.edu.cn.}
 \thanks{T. Yang is with the State Key Laboratory of Synthetical Automation for Process Industries, Northeastern University, Shenyang 110819, China. Email: yangtao@mail.neu.edu.cn.}}

\begin{document}
	\maketitle
	 \begin{abstract}
 This paper studies distributed stochastic nonconvex optimization problems with compressed communication and differential privacy, in which  each agent aims to minimize the sum of all agents' cost functions by using local compressed information exchange. To this end, we propose a compressed distributed stochastic gradient descent algorithm, which is robust under a 
general class of compression operators that allow both relative and absolute compression errors. We then show that the proposed algorithm finds the first-order stationary point for smooth nonconvex functions with the linear speedup convergence rate $\mathcal{O}(1/\sqrt{nT})$ and converges to the optimum if the global cost function additionally satisfies the Polyak--Łojasiewicz (P--\L) condition with the convergence rate $\mathcal{O}(1/(nT^\theta)),\theta\in(0,1)$, where $T$ is the total number of iterations and $n$ is the number of agents. Furthermore, if the P--\L ~constant is known in advance, we show that the proposed algorithm achieves a convergence rate $\mathcal{O}(1/(nT))$. Finally, we show that the proposed algorithm is able to achieve $(0,\delta)$-differential privacy without sacrificing convergence accuracy. Numerical experiments are carried out to verify the efficiency of our algorithm.
	\end{abstract}
	 \begin{IEEEkeywords}
		 Compressed communication, distributed nonconvex optimization, differential privacy, linear speedup, stochastic gradient.
	\end{IEEEkeywords}
	
\section{Introduction}
As a foundational framework for networked systems, distributed optimization has become a prominent research topic~\cite{yang2019survey}, playing a fundamental role in fields such as distributed resource allocation~\cite{xu2017distributed}, control~\cite{nedic2018distributed}, learning~\cite{li2020distributed}, and estimation~\cite{cattivelli2009diffusion}. As noted by Notarstefano \textit{et al.}~\cite{notarstefano2019distributed}, distributed optimization designs local computation and communication rules for the networked processes of multi-agent systems, enabling agents to collaboratively address global problems. To address the distributed optimization problem, researchers have proposed numerous distributed optimization algorithms. A pioneering work is the Distributed Gradient Descent (DGD) algorithm proposed by Nedić and Ozdaglar \cite{nedic2009distributed}. They proved that the DGD algorithm can asymptotically converge to the global optimal solution under a diminishing step size. However, the use of a diminishing step size results in a relatively slow convergence speed for the DGD algorithm. To improve convergence speed, Shi et al. \cite{shi2015extra} proposed a novel EXTRA algorithm by utilizing historical information and demonstrated that EXTRA can linearly converge to the optimal solution. Furthermore, Qu and Li \cite{qu2017harnessing} introduced a distributed optimization algorithm based on gradient tracking by incorporating additional communication to track the global gradient, which also achieves linear convergence. 

It is noteworthy that most of the aforementioned methods require full gradient information. However, such information is often unavailable or difficult to obtain~\cite{conn2009introduction,agarwal2010optimal}. An effective solution is to use stochastic gradients as a substitute for actual gradients, as stochastic gradients can be computed from randomly sampled data subsets. Incorporating stochastic gradients, researchers have derived several convergence results for algorithms under strongly convex conditions~\cite{pu2018swarming,pu2021distributed,xin2019distributed}. For instance, Pu and Garcia~\cite{pu2018swarming} investigated distributed asynchronous stochastic optimization algorithms. Notably, due to the errors introduced by stochastic gradients, these methods can only achieve linear convergence to a neighborhood of the optimal solution. To mitigate the impact of stochastic gradients on convergence, a classical approach is to adopt a time-decaying step size~\cite{lei2018asymptotic,sundhar2010distributed,r156,srivastava2011distributed}. For example, Lei \textit{et al.} \cite{lei2018asymptotic} addressed distributed stochastic convex optimization in random networks and proposed a distributed stochastic convex optimization algorithm with a decaying step size. They proved that the proposed algorithm can almost surely converge to the optimal solution. Furthermore, Lian \textit{et al.} \cite{lian2017can} proposed an algorithm for nonconvex objective functions that achieves a convergence rate of $\mathcal{O}(1/\sqrt{nT})$, where $T$ is the number of iterations. Notably, this convergence rate is $n$ times faster than the optimal convergence rate of centralized Stochastic gradient descent (SGD) algorithms, $\mathcal{O}(1/\sqrt{T})$ \cite{ghadimi2013stochastic}. Researchers have defined this property as the algorithm achieving linear speedup with respect to the number of agents. Similar results on linear speedup under nonconvex objective functions can also be found in the literature \cite{tang2018d,yu2019linear,r139,tang2018communication}. For convex objective functions, Koloskova et al. \cite{koloskova2019decentralized1} established a similar linear speedup with a convergence rate of $\mathcal{O}(1/(nT))$, which is also $n$ times faster than the optimal convergence rate of centralized SGD algorithms, $\mathcal{O}(1/T)$ \cite{rakhlin2011making}.

 In distributed optimization problems, each agent needs to exchange information with its neighbors in order to obtain the global information. However, network bandwidth is typically limited in practical problems. Therefore, it is necessary to consider communication-efficient algorithms. A common solution for agents is to transmit compressed information instead of the raw information. Alistarh \textit{et al.}~\cite{alistarh2017qsgd} and Koloskova \textit{et al.}~\cite{koloskova2019decentralized} proposed communication-efficient SGD algorithms by using an unbiased compressor and biased but contractive compressors, respectively. Singh~\textit{et al.}~\cite{singh2022sparq} additionally considered an event-triggered mechanism to further reduce communication costs. Furthermore, \cite{alistarh2017qsgd,koloskova2019decentralized,singh2022sparq,vogels2020practical} also achieved an $\mathcal{O}(1/\sqrt{nT})$ convergence rate, where the omitted parameters are not affected by the number of agents $n$. Therefore, they achieved linear speedup convergence. However, the authors of~\cite{alistarh2017qsgd,koloskova2019decentralized,singh2022sparq,vogels2020practical} provided analysis only for strongly convex and smooth nonconvex cost functions, but did not provide analysis for the Polyak–Łojasiewicz (P--Ł) condition. The P--Ł condition is weaker than the strong convexity and does not imply the convexity~\cite{yi2021linear}. 
 
 In addition to the aforementioned demand for improving communication efficiency in distributed optimization, how to preserve the privacy of agents has also received widespread attention. Differential privacy, introduced by Dwork et al.~\cite{dwork2008differential}, has emerged as the gold standard for privacy in distributed optimization owing to its strong and mathematically rigorous guarantees. A common approach to achieving differential privacy is to inject Gaussian or Laplacian noise into the information exchanged between agents~\cite{huang2024differential,huang2015differentially,wang2023efficient,ding2021differentially1}. Moreover, several recent works have achieved both communication efficiency and differential privacy by combining compression with additive noise perturbation~\cite{xie2023differentially,xie2024communication,chen2024differentially}. However, these methods typically rely on injecting additional noise to achieve privacy guarantees. To this end, several recent works have achieved both communication efficiency and noise-free differential privacy by employing improved compressors~\cite{wang2022quantization,huo2024compression,agarwal2018cpsgd}. Although these methods cleverly exploit properties of the compressors to achieve differential privacy, they are limited to some specific compressors.  
	 
To relax this restriction on the choice of compressors, in this paper we propose a Robust Compressed Primal--dual SGD algorithm (RCP-SGD) to solve the distributed stochastic nonconvex optimization problem with limited bandwidths and differential privacy. The main contributions of this work are summarized as follows.
 \begin{enumerate}
	\item The proposed algorithm RCP-SGD  is robust for a general class of compressors with both relative and absolute compression errors, which covers the class of compressors used in~\cite{alistarh2017qsgd,koloskova2019decentralized,singh2022sparq,vogels2020practical,koloskova2019decentralized1,xie2023differentially,xie2024communication,chen2024differentially,wang2022quantization,huo2024compression}. We show that RCP-SGD finds a first-order stationary point with the linear speedup convergence rate $\mathcal{O}(1/\sqrt{nT})$ when the cost functions are smooth (Theorem~\ref{theo:convergence1}). We would like to highlight that, comparing with~\cite{alistarh2017qsgd,koloskova2019decentralized,singh2022sparq,vogels2020practical,koloskova2019decentralized1}, we achieve such linear speedup convergence under weaker assumptions on the (stochastic) gradients (Remark~\ref{remark:compare}).
        \item We further prove that if the global objective function additionally satisfies the P--Ł condition, the RCP-SGD algorithm converges to a global optimum at a rate of $\mathcal{O}(1/(nT^\theta))$ ~(Theorem~\ref{theo:convergence2}), where $\theta \in (0,1)$. Moreover, if the P--Ł condition constant is known, it can be proven that the proposed algorithm finds a global optimum at the inear speedup convergence rate of $\mathcal{O}(1/(nT))$ (Theorem~\ref{theo:convergence3}). 
        \item Finally, realize differential privacy of the algorithm through compression, we propose a transformation of the compressor (see Definition~\ref{def:privacycom}). For any compressor satisfying the general compressor assumptions in this paper, it is proven that the RCP-SGD algorithm, under the transformed compressor, can simultaneously achieve $(0,\delta)$-differential privacy and accurate convergence (Theorem~\ref{theo:privacy1}).  Notably, unlike existing results~\cite{wang2022quantization,huo2024compression}, the proposed algorithm is applicable to a general class of compressors.  
 \end{enumerate}

The remainder of this paper is organized as follows. In Section~\ref{sec:Problemsetup}, we introduce the necessary notations and formulate the considered problem. The RCP-SGD algorithm is proposed in Section~\ref{sec:Algorithm1}, and its convergence rate without and with P--Ł condition are then analyzed. Section~\ref{sec:privacy} provides the privacy analysis for RCP-SGD under the transformed compressor. Some numerical examples are provided in Section~\ref{simulation} to verify the theoretical results. The conclusion and proofs are provided in Section~\ref{conclusion} and Appendix, respectively.
	
\emph{Notations}: $\R$ ($\mathbb{R}_{+}$) is the set of (positive) real numbers. $\mathbb{N}$ ($\mathbb{N}^+$) the set of nonnegative (positive) nature numbers. $\mathbb{R}^n$ is the set of $n$ dimensional vectors with real values. The transpose of a matrix $P$ is denoted by $P^\top$, and we use $P_{ij}$ to denote the element in its $i$-th row and $j$-th column. The Kronecker production is denoted by~$\otimes$. The $n$-dimensional all-one and all-zero column vectors are denoted by $\mathbf{1}_n$ and $\mathbf{0}_n$, respectively. The $n$-dimensional identity matrix is denoted by $I_n$. $diag(x)$ is a diagonal matrix with the vector $x$ on its diagonal. We then introduce two stacked vectors: for a vector $\x\in\R^{nd}$, we denote $\bar{x}=\frac{1}{n}(\mathbf{1}_n^\top\otimes I_d)\x$ and $\mathbf{\bar{\x}}\triangleq\mathbf{1}_n\otimes\bar{x}$. $\vert\cdot\vert$ and $\Vert\cdot\Vert$ denote the absolute value and $l_2$ norm, respectively. For a matrix $W$, we use $\bar{\lambda}_W$ and $\underline{\lambda}_W$ to denote its spectral radius and
minimum positive eigenvalue if the matrix $W$ has positive eigenvalues, respectively. Furthermore, for any square matrix $A$ and vector $x$ with suitable dimension, we denote $\Vert x\Vert_A^2=x^\top Ax$.

\section{Preliminaries and Problem Formulation} \label{sec:Problemsetup}

\subsection{Distributed Stochastic Optimization}

In this paper, we consider a network of $n$ agents, and all agents aim to solve the following empirical risk minimization problem~\cite{zhao2022beer}:
\begin{align}\label{P2}
	\min_{x\in\mathbb{R}^d}f(x)=\frac{1}{n}\sum_{i=1}^n f_i(x), f_i(x)=\E_{\xi_{i}\sim\mathcal{D}_i}[F_i(x,\xi_i)],
\end{align}
where $x$ is the global decision variable, $f_i(x):\R^d\mapsto \R$ is the local cost function of agent $i$, $\xi_i$ represents the local data of agent $i$ that follows the distribution $\mathcal{D}_i$ and $F_i(x,\xi_i)$ is a local stochastic cost function. In this paper, we assume each agent $i$ maintains a local estimate $x_{i,k}\in\mathbb{R}^d$ of $x$ at time step $k$ and use $\lf_i(x_{i,k})$ to denote the gradient of $f_i$ with respect to $x_{i,k}$. Furthermore, each agent in the network only has access to the stochastic gradient of its local cost function. We use $\tilde{\nabla}f_{i,k}=\nabla F_i(x_{i,k},\xi_{i,k})$ to denote the stochastic gradient at $x_{i,k}$ with a local data $\xi_{i,k}$.

To solve the global stochastic optimization~\eqref{P2}, agents need to communicate for eastimating the global information. We assume that $n$ agents communicate over an undirected graph $\mathcal{G}(\mathcal{V}, \mathcal{E})$, where $\mathcal{V}=\{1, 2, \ldots, n\}$ is the set of agents' indices and $\mathcal{E} \subseteq \mathcal{V} \times \mathcal{V}$ is the set of edges. The edge $(i,j)\in\mathcal{E}$ if and only if agents~$i$ and~$j$ can communicate with each other. The coupling weight matrix of $\mathcal{G}$ is denoted by $W=[w_{ij}]_{n\times n}\in\mathbb{R}^{n\times n}$ with $w_{ij}>0$ if $(i,j)\in\mathcal{E}$, and $w_{ij}=0$, otherwise. Furthermore, the neighbor agent set of agent $i$ is denoted by $\mathcal{N}_i=\{j\in\mathcal{V}|~(i,j)\in\mathcal{E}\}$. The degree matrix is denoted as $D=diag[d_1,d_2,\cdots,d_n]$, where $d_i=\sum_{j}^n w_{ij},~\forall i\in\mathcal{V}$. The Laplacian matrix of graph $\mathcal{G}$ is denoted by $L=D-W$. Then, the following standard assumptions are given.  

\begin{assumption}\label{as:strongconnected}
	The undirected graph $\mathcal{G}(\mathcal{V}, \mathcal{E})$ is connected. Each local cost function $f_i$ is $L_f$-smooth, for some $L_f>0$, namely for any $x,y\in\mathbb{R}^d$,
	\begin{align}\label{eqn:smooth}
		\left\Vert \lf_i(x)-\lf_i(y)\right\Vert\leq L_f\left\Vert x-y\right\Vert.
	\end{align}
\end{assumption}
From~\eqref{eqn:smooth}, we have
\begin{align}\label{eqn:smooth1}
	\vert f_i(y)-f_i(x)-(y-x)^\top\lf_i(x)\vert\leq\frac{L_f}{2}\left\Vert y-x\right\Vert^2.
\end{align}

Assumption~\ref{as:strongconnected} is standard for distributed optimization problems and widely used in existing works, e.g.,~\cite{shi2015extra,yang2019survey,yi2022communication}.  
\begin{assumption}\label{as:boundedvar}
	The random variables $\{\xi_{i,k},i\in\mathcal{V},k\in\mathbb{N}\}$ are independent of each other. The stochastic gradient $\nabla F_i(x,\xi_{i,k})$ is unbiased, that is,
	\begin{align}
		\E_{\xi_{i,k}}[\nabla F_i(x,\xi_{i,k})]=\lf_i(x),~\forall i\in\mathcal{V},~k\in\mathbb{N}, x\in\R^,
	\end{align}
	where $\mathbb{E}_{\xi_{i,k}}$ denotes the expectation with respect to $\xi_{i,k}$. Furthermore, there exists a constant $\sigma>0$ such that 
	\begin{align}
		\E_{\xi_{i,k}}\Vert \nabla F_i(x,\xi_{i,k})-\lf_i(x)\Vert^2\leq\sigma^2,~\forall i\in\mathcal{V},~k\in\mathbb{N}, x\in\R^d.
	\end{align}
\end{assumption}
\begin{remark}
	Assumption \ref{as:boundedvar} are commonly used for stochastic gradients, e.g.,~\cite{alistarh2017qsgd,singh2022sparq,huang2023cedas}. Furthermore, Assumption~\ref{as:boundedvar}~only requires that the random gradient has a bounded variance, which is weaker than the bounded second moment or the bounded gradient used in~\cite{koloskova2019decentralized1,stich2018local}.
\end{remark}

We then make the following assumptions on the global cost function~$f$.

\begin{assumption}\label{as:finite}
	Let $f^*$ be the minimum function value of the problem~\eqref{P2}. We assume $f^*>-\infty$.
\end{assumption}
\begin{assumption}\label{as:PLcondition}
	(Polyak–Łojasiewicz (P--Ł) condition~\cite{yi2022communication}) There exists a constant $\nu>0$ such that for any $x\in\mathbb{R}^d$,
	\begin{align}
		\frac{1}{2}\left\Vert \lf(x)\right\Vert^2\geq \nu(f(x)-f^*).
	\end{align}
\end{assumption}
\begin{remark}
	Note that the P--Ł condition does not imply the convexity of the global cost function $f$, and is weaker than strong convexity~\cite{yi2021linear}. Furthermore, it is easy to check that all stationary points of~\eqref{P2} under P--Ł condition are the global minimizer.
\end{remark}

\subsection{Differential Privacy and Compression Method}
	
In this paper, we follow the standard setting where each agent aims to protect the privacy of its local data. To quantify the level of privacy preservation, we introduce the following concept of adjacency and differential privacy of agent $i$~\cite{chen2024local}.
\begin{definition}\label{def:adjacency}
	(Adjacency) For any agent $i\in\mathcal{V}$, given two local datasets $\mathcal{S}_i^{(1)}=\{\xi_{i,t}^{(1)},t\in[0,\infty) \}_{i=1}^n$ and $\mathcal{S}_i^{(2)}=\{\xi_{i,t}^{(2)},t\in[0,\infty) \}_{i=1}^n$, $\mathcal{S}_i^{(1)}$ is said to be adjacent to $\mathcal{S}_i^{(2)}$ if there exists a time step $k\in\mathbb{N}^+$ such that $\xi_{i,k}^{(1)}\neq \xi_{i,k}^{(2)}$ while $\xi_{i,t}^{(1)}=\xi_{i,t}^{(2)}$ otherwise.
\end{definition}

From the above definitions, two local datasets are said to be adjacent if they differ in exactly one data point and are identical otherwise. We now introduce the definition of differential privacy.
\begin{definition}\label{def:differentialprivacy}
	(Differential privacy) Let $\mathbb{M}(\mathcal{S}_i, x_{-i})$ be be an implementation
	of a decentralized algorithm by agent $i$, which takes agent $i$'s dataset $\mathcal{S}_i$ and all received information $x_{-i}$ as input. Then, given $\epsilon\geq0$ and $1\geq\delta\geq0$, for any two adjacent datasets $\mathcal{S}_i^{(1)}$ and $\mathcal{S}_i^{(2)}$, any observation $\mathcal{H}_i\subseteq\range(\mathbb{M})$, the implementation $\mathbb{M}$ keeps ($\epsilon$, $\delta$)-differential privacy if
	\begin{align}\label{eq:dp}
		\mathbb{P}\{\mathbb{M}(\mathcal{S}_i^{(1)},x_{-i})\in\mathcal{H}_i\}\leq e^\epsilon \mathbb{P}\{\mathbb{M}(\mathcal{S}_i^{(2)},x_{-i})\in\mathcal{H}_i\}+\delta,
	\end{align}
	where $\range(\mathbb{M})$ denotes the output domain of $\mathbb{M}$.
\end{definition}
Definition~\ref{def:differentialprivacy} implies that if $\mathbb{M}$ is $(\epsilon,\delta)$-differentially private, then for any agent $i$, the output distributions of $\mathbb{M}$ under any pair of adjacent local datasets $\mathcal{S}_i^{(1)}$ and $\mathcal{S}_i^{(2)}$ are close. In other words, an adversary cannot reliably detect differences in any agent’s local data simply by observing the algorithm’s output. Additionally, ($\epsilon$, $\delta$)-differential privacy can be simplified to ($0$, $\delta$)-differential privacy if $\epsilon=0$. Traditionally, privacy is ensured by adding explicit noise (Laplacian or Gaussian)~\cite{huang2015differentially,ding2021differentially}. However, recent works~\cite{huo2024compression,wang2022quantization} show that some specific compressors can simultaneously save communication resource and preserve differential privacy. Accordingly, we assume agents exchange only compressed variables, achieving both goals without additional noise. More specifically, for any $x\in\R^d$, we consider a general class of stochastic compressors $\mathcal{C}(x)$ that satisfy the following assumption.

\begin{assumption}\label{as:compressor}
For some constants $\varphi\in(0,1]$, $r>0$ and $\sigma_{\mathcal{C}}\geq0$, the compressor $\mathcal{C}(\cdot):\mathbb{R}^d\mapsto\mathbb{R}^d$ satisfies 
\begin{align*}
    \mathbb{E}_\mathcal{C}\left[
        \left\Vert \frac{\mathcal{C}(x)}{r}-x\right\Vert^2\right]\leq(1-\varphi)\left\Vert x\right\Vert^2+\sigma_{\mathcal{C}}, \forall x\in\mathbb{R}^d,\addtag\label{eq:propertyofcompressors}
\end{align*}
where $\mathbb{E}_\mathcal{C}$ denotes the expectation with respect to the stochastic compression operator $\mathcal{C}$. 
\end{assumption}
From~\eqref{eq:propertyofcompressors} and the Cauchy--Schwarz inequality, one obtains that
\begin{align}\label{eq:propertyofcompressors1}
    \mathbb{E}_C\left[
    \left\Vert C(x)-x\right\Vert^2\right]\leq r_0\left\Vert x\right\Vert^2+2r^2\sigma_{\mathcal{C}}, \forall x\in\mathbb{R}^d.
\end{align}
where $r_0=2r^2(1-\varphi)+2(1-r)^2$.

\begin{remark}
Assumption~\ref{as:compressor} follows the formulation in~\cite{liao2024robust}. As highlighted therein, this assumption is less restrictive than the conditions typically required by most existing compressed decentralized optimization algorithms, providing a more general framework for convergence analysis. It covers the deterministic quantization~\cite{zhu2016quantized} and unbiased random quantization~\cite{reisizadeh2019robust}. It is worth noting that compressors under Assumption~\ref{as:compressor} possess both bounded relative and bounded absolute errors simultaneously. The implemented algorithm must be carefully designed to handle the influence of both relative and absolute error errors on convergence. The precise relationship between compressors and privacy are elaborated in Section~\ref{sec:privacy}.
\end{remark}

\section{Compressed Primal--Dual SGD Algorithm }\label{sec:Algorithm1}
In this section, we propose a Robust Compressed Primal--dual SGD algorithm (RCP-SGD) to solve the problem~\eqref{P2}, which is robust on various compressors that satisfy the Assumption~\ref{as:compressor}. Furthermore, we analyze the convergence rates of RCP-SGD without and with the P--Ł condition.
\subsection{Algorithm Description}
To solve the distributed nonconvex optimization problem~\eqref{P2}, Yi~\textit{et~al.}~\cite{yi2022primal} proposed the following distributed primal--dual SGD algorithm
\begin{align*}
&~x_{i,k+1}=x_{i,k}-\eta_k(\gamma_k\sum_{j=1}^n L_{ij}x_{j,k}+\omega_k  v_{i,k}+\tilde{\nabla} f_{i,k}),\addtag\label{iterationxp}\\
&~v_{i,k+1}=v_{i,k}+\eta_k\omega_k  \sum_{j=1}^n L_{ij}x_{j,k},~v_{i,0}=\mathbf{0}_d,\addtag\label{iterationv}
\end{align*}
where $\eta_k$ is step-size, $\gamma_k$ as well as$~\omega_k $ are time-varying positive parameters, and $v_{i,k}$ is the dual variable of agent~$i$. 

To accommodate limited bandwidth, we assume that each agent~$i$ use a estimated compressed state $\hat{x}_{i,k}$ to replace the true information $x_{i,k}$ in updates. Specifically, the updates for agent $i\in\mathcal{V}$ can be described as follows:
\begin{align*}
&~x_{i,k+1}=x_{i,k}-\eta_k(\gamma_k \sum_{j=1}^n L_{ij}\hat{x}_{j,k}+\omega_k  v_{i,k}+\tilde{\nabla} f_{i,k}),\addtag\label{eq:iterationxp2}\\
&~v_{i,k+1}=v_{i,k}+\eta_k\omega_k \sum_{j=1}^n L_{ij}\hat{x}_{j,k},~v_{i,0}=\mathbf{0}_d,\addtag\label{eq:iterationv2}
\end{align*}
where 
\begin{align*}
	&\hat{x}_{j,k}=x_{j,k}^c+h_k\mathcal C((x_{j,k}-x_{j,k}^c)/h_k),\addtag\label{citerationx}\\
	&x_{j,k+1}^c=(1-\alpha_x)x_{j,k}^c+\alpha_x\hat{x}_{j,k},\addtag\label{citerationxc}
\end{align*}
with $\alpha_x$ being a positive parameter, $\{h_k\}$ is a designed sequence and $x_{i,k}^c$,~$\forall i\in\mathcal{V}$ is an auxiliary variable with initial value $x_{i,0}^c=\mathbf{0}_d$. We then describe the RCP-SGD in Algorithm~\ref{Al:RCP-SGD}. From the compressor's property~\eqref{eq:propertyofcompressors1}, the compression error satisfies
\begin{align*}
\E&[\|x_{j,k}-\hat x_{j,k}\|^2]\\
&=\E[h_k^2\|(x_{j,k}-x_{j,k}^c)/h_k-\mathcal{C}((x_{j,k}-x_{j,k}^c)/h_k)\|^2]\\
&\leq r_0\E\|x_{j,k}-x_{j,k}^c\|^2+2r^2\sigma_{\mathcal{C}}h_k^2, \addtag\label{eq:propertyofcompressors0}
\end{align*}

\begin{algorithm}[]
	\caption{RCP-SGD Algorithm}
	\label{Al:RCP-SGD}
	\begin{algorithmic}[1]
		\STATE \textbf{Input:} Stopping time $T$, Laplacian matrix $L$, and positive parameters $\{\eta_k\}$, $\{\gamma_k\}$, $\{\omega_k\}$, $\alpha_x$.
		\STATE \textbf{Initialization:} Each ~$i\in\mathcal{V}$ chooses arbitrarily $x_{i,0}\in\mathbb{R}^d$, $x^c_i(0)=\bf{0}_d$, $v_i(0)=\bf{0}_d$.
		\FOR{$k=0,1,\dots,T-1$}
		\FOR {for $i\in\mathcal{V}$ in parallel} 
		\STATE Compute $C(x_{i,k}-x_{i,k}^c)$ and broadcast it to its neighbors $\mathcal{N}_i$.
		\STATE Receive $C(x_{j,k}-x_{j,k}^c)$ from $j\in\mathcal{N}_i$.	
		\STATE Update $x_{i,k+1}$ and $v_{i,k+1}$ according to~\eqref{eq:iterationxp2} and~\eqref{eq:iterationv2}, respectively.
		\STATE Update $x_{j,k+1}^c$ from~\eqref{citerationxc}.
		\ENDFOR
		\ENDFOR
		\STATE \textbf{Output:} \{$x_{i,k}$\}.
	\end{algorithmic}
\end{algorithm}

\subsection{Convergence Analysis of RCP-SGD}
In this section, we first show the convergence of RCP-SGD  for smooth nonconvex cost functions.

\begin{theorem}\label{theo:convergence1}
	Suppose Assumptions~\ref{as:strongconnected}--\ref{as:finite} and \ref{as:compressor} hold and in Algorithm~\ref{Al:RCP-SGD}, let~$\gamma_k=\beta_1\omega_k,
 ~\eta_k=\frac{\beta_2}{\omega_k},~\omega_k=\omega>\beta_3$, $\alpha_x\in(0,\frac{1}{r})$, and $h_k=h_0^k$, $\forall k\in\mathbb{N}$ where $~\beta_1>c_0,~\beta_2>0$, $h_0\in(0,1)$ is a arbitrary constant with $c_0,\beta_3$ being positive constants given in Appendix~\ref{app-convergence1}. Then, for any $T\in\mathbb{N}$, we have
 \begin{align*}
    &\frac{1}{T}\sum_{k=0}^{T-1}\mathbb{E}\left[\frac{1}{n}\sum_{i=1}^n\Vert x_{i,k}-\bar{x}_k\Vert^2\right]\leq \mathcal{O}(\frac{1}{T})+\mathcal{O}(\frac{1}{\omega^2}),\addtag\label{eq:theo11}\\
    &\frac{1}{T}\sum_{k=0}^{T-1}\mathbb{E}\Vert \nabla f(\bar{x}_k)\Vert^2\leq\mathcal{O}(\frac{\omega}{T})+\mathcal{O}(\frac{1}{n\omega})+\mathcal{O}(\frac{1}{T})+\mathcal{O}(\frac{1}{\omega^2}),\addtag\label{eq:theo12}
 \end{align*}
Let $\omega=\beta_2\sqrt{T}/\sqrt{n}$, for any $T>n(\beta_3/\beta_2)^2$, then we have
  \begin{align*}
    &\frac{1}{T}\sum_{k=0}^{T-1}\mathbb{E}\left[\frac{1}{n}\sum_{i=1}^n\Vert x_{i,k}-\bar{x}_k\Vert^2\right]= \mathcal{O}(\frac{n}{T}),\addtag\label{eq:coro11}\\
    &\frac{1}{T}\sum_{k=0}^{T-1}\mathbb{E}\Vert \nabla f(\bar{x}_k)\Vert^2=\mathcal{O}(\frac{1}{\sqrt{nT}})+\mathcal{O}(\frac{n}{T}).\addtag\label{eq:coro12}
 \end{align*}
\end{theorem}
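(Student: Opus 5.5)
We will follow the Lyapunov-function approach of Yi \textit{et al.}~\cite{yi2022primal} for the uncompressed primal--dual SGD, and augment it with a compression-error term. In compact notation, let $\x_k$, $\vv_k$, $\hx_k$ and $\x^c_k$ be the stacked vectors and $\bl=L\otimes I_d$. The iteration then reads
\[
\x_{k+1}=\x_k-\eta(\gamma\bl\hx_k+\omega\vv_k+\g_k),\qquad \vv_{k+1}=\vv_k+\eta\omega\bl\hx_k .
\]
Writing $\hx_k=\x_k+\mathbf{e}_k$ with $\mathbf{e}_k=\hx_k-\x_k$, we recover the uncompressed algorithm perturbed by the terms $\eta\gamma\bl\mathbf{e}_k$ and $\eta\omega\bl\mathbf{e}_k$. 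Since $\mathbf{1}^\top L=0$, the average obeys the exact recursion $\bar{x}_{k+1}=\bar{x}_k-\eta\bar{g}_k$, and compression does not enter it.

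\textbf{Lyapunov function.} We will use the standard four-part function
\[
V_k=\tfrac12\|\x_k\|_{\bk}^2+\tfrac12\|\vv_k+\tfrac{1}{\beta_1\omega}\g^0_k\|_{\bp+\beta_1\bk}^2+\|\x_k\|_{\bk\bp}^2+n(f(\bar{x}_k)-f^*)+c_1\|\x_k-\x^c_k\|^2 .
\]
Here $\bk=I-\frac1n\mathbf{1}\mathbf{1}^\top\otimes I_d$, $\bp$ is the pseudo-inverse of $\bl$ restricted to the consensus-orthogonal subspace, and $\g^0_k$ is the stacked true gradient evaluated at $\bar{x}_k$. The last term tracks the compression state. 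Using the compressor bound~\eqref{eq:propertyofcompressors0} together with $\x^c_{k+1}-\x_{k+1}=(1-\alpha_x)(\x^c_k-\x_k)+\alpha_x\mathbf{e}_k+(\x_k-\x_{k+1})$, we will show a contraction
\[
\E\|\x_{k+1}-\x^c_{k+1}\|^2\le(1-c\,\alpha_x)\E\|\x_k-\x^c_k\|^2+C\eta^2\E\|\x_{k+1}-\x_k\|^2/\alpha_x+C r^2\sigma_{\mathcal C}h_k^2 .
\]
This uses \eqref{eq:propertyofcompressors} in its relative form $\E\|\mathcal C(y)/r-y\|^2\le(1-\varphi)\|y\|^2+\sigma_{\mathcal C}$ and the condition $\alpha_x r<1$, which makes $(1-\alpha_x)y+\alpha_x\mathcal C(y)=y-\alpha_x r(y-\mathcal C(y)/r)$ a strict contraction in expectation.

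\textbf{Descent of the remaining terms.} Mimicking~\cite{yi2022primal}, we expand each of the other pieces of $V_{k+1}-V_k$ with smoothness~\eqref{eqn:smooth1}, the unbiasedness and variance bound of Assumption~\ref{as:boundedvar}, and the spectral bounds $\underline\lambda_L\bk\le\bl\le\bar\lambda_L\bk$. This yields
\begin{align*}
\E V_{k+1}\le{}&\E V_k-c_2\eta\gamma\E\|\x_k\|_\bk^2-c_3\eta\,\E\|\vv_k+\tfrac{1}{\beta_1\omega}\g^0_k\|_\bk^2-\tfrac{n\eta}{4}\E\|\nabla f(\bar{x}_k)\|^2\\
&+C\eta^2\sigma^2 n+C\eta^2\gamma^2\E\|\mathbf{e}_k\|^2 .
\end{align*}
The error term is then controlled by~\eqref{eq:propertyofcompressors0} through $r_0\|\x_k-\x^c_k\|^2+2r^2\sigma_{\mathcal C}h_k^2$. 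Since $\eta\gamma=\beta_1\beta_2$ is constant, this term is of order $\eta^0$. It must therefore be absorbed by the contraction of the $c_1$-term, which forces $c_1$ to be large and $\beta_1>c_0$, $\omega>\beta_3$ so that $\eta=\beta_2/\omega$ is small. Because $h_k=h_0^k$, the absolute-error contributions sum to at most $C\sigma_{\mathcal C}/(1-h_0^2)$, a constant independent of $T$.

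\textbf{Telescoping and rates.} Summing the one-step inequality over $k=0,\dots,T-1$ and dividing by $T\eta$, and then by $n$, we use $V_k\ge0$ and Assumption~\ref{as:finite}. The consensus term has coefficient $\eta\gamma$, which is constant, so it gives~\eqref{eq:theo11} with an $\mathcal O(1/T)$ part; the $\mathcal O(1/\omega^2)$ part comes from the gradient terms $\eta^2\|\g^0\|^2$ and the mismatch $\|\g_k^0-\nabla f(\x_k)\|$. The gradient bound~\eqref{eq:theo12} follows because $V_0/(T\eta)=\mathcal O(\omega/T)$ and the variance term gives $\eta\sigma^2=\mathcal O(1/\omega)$, which becomes $\mathcal O(1/(n\omega))$ after dividing by $n$ since the averaged noise has variance $\sigma^2/n$. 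Substituting $\omega=\beta_2\sqrt{T/n}$, with $T>n(\beta_3/\beta_2)^2$ ensuring $\omega>\beta_3$, gives~\eqref{eq:coro11}--\eqref{eq:coro12}. One point needs care here: the $\mathcal O(1/T)$ terms coming from the compression sum and $V_0$ must scale as $n/T$. We will track the $n$-dependence so that the per-agent terms are normalized by $1/n$ and the $1/\omega^2$ terms become $n/T$.

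\textbf{Main obstacle.} The hardest step is the coupling of the compression error with the dual dynamics. Because $\eta\gamma$ and $\eta\omega$ do not vanish, $\mathbf{e}_k$ enters the recursion for $\vv$ and the cross terms $\langle\vv_k,\bl\mathbf{e}_k\rangle$ at order one. We would first try Young's inequality with weights proportional to the contraction rate $\alpha_x\varphi$, and then choose $c_1$ together with the thresholds $c_0,\beta_3$ so that every resulting coefficient stays negative. This parameter bookkeeping is where the constants in Appendix~\ref{app-convergence1} will be determined.
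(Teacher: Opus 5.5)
Your overall architecture matches the paper's: a five-part Lyapunov function whose last term is $\|\x_k-\x^c_k\|^2$, a contraction for that term, telescoping with $V_k\ge 0$, and the substitution $\omega=\beta_2\sqrt{T/n}$. However, the Lyapunov function you write down is not the one of \cite{yi2022primal}, and the one-step inequality you claim cannot be derived from it.

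First, the dual error must be $\vv_k+\frac{1}{\omega}\g^0_k$ (your $\g^0_k$ is the paper's $\g^b_k$), not $\vv_k+\frac{1}{\beta_1\omega}\g^0_k$. The primal update contains $\omega\vv_k+\g_k$, so your shift leaves a forcing term $\eta(1-\frac{1}{\beta_1})\bk\g^0_k$. This is a gradient-heterogeneity quantity that Assumptions 1--3 do not bound. Moreover, $\vv_k+\frac{1}{\gamma}\g^0_k$ does not vanish at the fixed point, where $\bk(\omega\vv+\g^0)=0$.

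Second, and more seriously, your third term $\|\x_k\|^2_{\bk\bp}$ has to be the sign-indefinite cross term $V_{3,k}=\x_k^\top\bk\bp(\vv_k+\frac{1}{\omega}\g^0_k)$, and the dual weight must be $(1+\beta_1)\bp$ rather than $\bp+\beta_1\bk$. The primal--dual coupling is skew, so the two quadratic terms produce no dissipation of the dual error. Neither does $\|\x_k\|^2_{\bk\bp}$: its increment contains the indefinite term $-2\eta\omega\,\x_k^\top\bk\bp\vv_k$. Consequently, the term $-c_3\eta\E\|\vv_k+\cdots\|^2$ in your one-step bound has no source.

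This is exactly where the paper resolves the obstacle you flag at the end, so it is not a matter of bookkeeping. With the correct $V_k$, the three order-one terms $\hx_k^\top\bk(\vv_k+\frac{1}{\omega}\g^0_k)$ coming from $V_1$, $V_2$, $V_3$ carry coefficients $-\eta\omega$, $(1+\beta_1)\eta\omega$ and $-\eta\gamma$. These cancel exactly because $\gamma=\beta_1\omega$. In addition, $V_3$ supplies the dissipation $-\eta(\omega-3\underline{\lambda}_L^{-1})\|\vv_k+\frac{1}{\omega}\g^0_k\|^2_\bp$. That dissipation absorbs the Young residual $\frac{\eta\omega}{4}\|\vv_k+\frac{1}{\omega}\g^0_k\|^2_\bp$, which is incurred when $\x_k$ is traded for $\hx_k$ at the price $\eta\omega\bar{\lambda}_L\|\hx_k-\x_k\|^2$.

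Without it, the residual from $\eta\omega\langle\vv_k,\bl\mathbf{e}_k\rangle$, where $\eta\omega=\beta_2$ is of order one, can only be bounded by a multiple of $V_k$. The recursion then reads $\E V_{k+1}\le(1+c\beta_2)\E V_k+\cdots$, and nothing telescopes. Because the cross term is indefinite, $V_k\ge\check{c}_1U_k\ge0$ additionally requires $\beta_1\underline{\lambda}_L>1$; this, together with $\tilde\epsilon_1>0$, is what $c_0$ encodes.

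Smaller points:
\begin{itemize}
\item Enlarging $c_1$ cannot by itself absorb the order-one compression error. The constant $c_1$ also multiplies $(1+\frac{2}{\varphi_1})\E\|\x_{k+1}-\x_k\|^2$, which contains $4\beta_1^2\beta_2^2\bar{\lambda}_L^2r_0\|\x_k-\x^c_k\|^2$.
\item Taking $\omega$ large does not shrink $\eta\gamma=\beta_1\beta_2$ or $\eta\omega=\beta_2$. What is really needed is $\beta_2$ small relative to $\varphi_1$. This is what the paper's condition $\epsilon_{10}-\eta\epsilon_{11}-\eta^2\epsilon_{12}>0$ amounts to, since $\eta\epsilon_{11}$ and $\eta^2\epsilon_{12}$ are essentially independent of $\omega$.
\item The contracting map is $y-\alpha_x\mathcal{C}(y)=(1-\alpha_x r)y+\alpha_x r(y-\mathcal{C}(y)/r)$, not $(1-\alpha_x)y+\alpha_x\mathcal{C}(y)$; the two agree only for $r=1$.
\item There should be no extra factor $\eta^2$ in front of $\E\|\x_{k+1}-\x_k\|^2$.
\end{itemize}
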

\begin{proof}
	See Appendix~\ref{app-convergence1}.
\end{proof}
\begin{remark}\label{remark:compare}
    Notably, the omitted parameters in $\mathcal{O}(1/\sqrt{nT})$ in~\eqref{eq:coro12} are unaffected by any parameters related to communication graphs. In other words, RCP-SGD is suitable for any connected graph. According to Theorem~\ref{theo:convergence1}, RCP-SGD achieves the linear speedup convergence rate $\mathcal{O}(1/\sqrt{nT})$ under smooth and nonconvex cost functions. Furthermore, it is important to note that, although the similar linear speedup convergence rate is also established in references~\cite{alistarh2017qsgd,koloskova2019decentralized,singh2022sparq,vogels2020practical,koloskova2019decentralized1}, they require additional assumptions. Specifically, the methods~\cite{alistarh2017qsgd,koloskova2019decentralized,singh2022sparq,koloskova2019decentralized1} required the stochastic gradients have second bounded moment and the method~\cite{ vogels2020practical} assumed that $\|\lf_i(x)-\lf(x)\|^2$ is uniformly bounded. Furthermore, these methods either do not account for compressed communication or consider compressors under conditions that are stricter than Assumption~\ref{as:compressor}.
\end{remark}

Then we provide the linear convergence of RCP-SGD with the P--\L~condition.
\begin{theorem}\label{theo:convergence2}
	Suppose Assumptions~\ref{as:strongconnected}--\ref{as:compressor} hold and in Algorithm~\ref{Al:RCP-SGD}, for any $T\geq(\beta_3/\beta_2)^{1/\theta}\in\mathbb{N}$, let~$\gamma_k=\beta_1\omega_k,
 ~\eta_k=\beta_2/\omega_k,~\omega_k=\omega=\beta_2(T+1)^\theta$, $\alpha_x\in(0,\frac{1}{r})$, and $h_k=h_0^k$, $\forall k\in\mathbb{N}$ where $~\beta_1>c_0,~1>\beta_2>0$, and $\theta\in(0,1)$, $h_0\in(0,1/2)$ are arbitrary constants with $c_0$ and $\beta_3$ being positive constants given in Appendix~\ref{app-convergence1}. Then, we have
 \begin{align*}
 \mathbb{E}\left[\frac{1}{n}\sum_{i=1}^n\Vert x_{i,T}-\bar{x}_T\Vert^2\right]= \mathcal{O}(\frac{1}{T^{2\theta}}),\addtag\label{eq:theo21}\\
 \mathbb{E}\left[f(\bar{x}_T)-f^*\right]= \mathcal{O}(\frac{1}{nT^{\theta}})+ \mathcal{O}(\frac{1}{T^{2\theta}}).\addtag\label{eq:theo22}\\
 \end{align*}
\end{theorem}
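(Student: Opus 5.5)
We will reuse the Lyapunov machinery built for Theorem~\ref{theo:convergence1} and add the P--Ł inequality to turn the averaged descent bound into a contraction. Write everything in stacked form: $\x_k$, $\vv_k$, $\x_k^c$, $\hx_k$. Use the eigen-decomposition of $L$ to work in the space orthogonal to $\mathbf{1}_n$. Define the potential
\begin{align*}
U_k=\tfrac12\Vert\x_k-\bx_k\Vert^2_{\bk}+\tfrac12\Big\Vert \vv_k+\tfrac{1}{\beta_1\omega}\g^0_k\Big\Vert^2_{\bp}+(\x_k-\bx_k)^\top\bk\bp\Big(\vv_k+\tfrac{1}{\beta_1\omega}\g^0_k\Big)+n\big(f(\bar{x}_k)-f^*\big)+c_x\Vert\x_k-\x_k^c\Vert^2,
\end{align*}
where $\bk=(I_n-\frac1n\mathbf{1}\mathbf{1}^\top)\otimes I_d$, $\bp$ is the pseudo-inverse of $L$ (tensored with $I_d$), and $\g^0_k$ is the stacked true gradient at $\bar{x}_k$. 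The last term tracks the compression memory error. With $\alpha_x<1/r$ and \eqref{eq:propertyofcompressors0}, it obeys
\begin{align*}
\E\Vert\x_{k+1}-\x_{k+1}^c\Vert^2\le(1-c\alpha_x)\E\Vert\x_k-\x_k^c\Vert^2+C\eta_k^2(\cdots)+C h_k^2 .
\end{align*}

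The first step is to take the one-step inequality for $\E[U_{k+1}]$ derived in Appendix~\ref{app-convergence1}. Given $\beta_1>c_0$ and $\omega>\beta_3$, it has the shape
\begin{align*}
\E U_{k+1}\le \E U_k-a_1\E\Vert\x_k-\bx_k\Vert^2_{\bk}-a_2\eta_k\E\Vert\nabla f(\bar{x}_k)\Vert^2-a_3\E\Vert\vv_k+\cdots\Vert_{\bp}^2+C\eta_k^2 n\sigma^2+C h_0^{2k}.
\end{align*}
The consensus and dual terms contract at a rate independent of $T$, since $\eta_k\gamma_k=\beta_1\beta_2$ is constant. The gradient term carries $\eta_k=\beta_2/\omega$.

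The second step applies Assumption~\ref{as:PLcondition}, $\Vert\nabla f(\bar x_k)\Vert^2\ge2\nu(f(\bar x_k)-f^*)$. The quadratic parts of $U_k$ are equivalent to the consensus, dual and memory errors, and those errors already decrease at a constant rate. This gives
\begin{align*}
\E U_{k+1}\le(1-\epsilon/\omega)\E U_k+C\frac{n\sigma^2}{\omega^2}+C h_0^{2k},
\end{align*}
with $\epsilon$ proportional to $\nu\beta_2$. Here $\beta_2<1$ keeps $\epsilon/\omega<1$, and $T\ge(\beta_3/\beta_2)^{1/\theta}$ guarantees $\omega\ge\beta_3$. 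Unrolling over $T$ steps and using $h_0<1/2$, so that $h_0^{2k}$ decays faster than the contraction factor, gives
\begin{align*}
\E U_T\le e^{-\epsilon T/\omega}U_0+\frac{C n\sigma^2}{\epsilon\,\omega}+(\text{geometric term}).
\end{align*}
Since $\omega=\beta_2(T+1)^\theta$ with $\theta<1$, the factor $e^{-\epsilon T^{1-\theta}/\beta_2}$ is super-polynomially small, and the noise term is $\mathcal{O}(n/T^\theta)$.

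The third step converts this bound into the two claims. Dividing by $n$ gives $\E[f(\bar x_T)-f^*]=\mathcal{O}(1/(nT^\theta))$ once the noise enters only through its average. For the consensus claim $\mathcal{O}(T^{-2\theta})$, we will not read it off $U_T$ directly. Instead we will run a separate recursion on $\Vert\x_k-\bx_k\Vert^2_{\bk}$, which contracts at a constant rate and is driven by $\eta^2(\sigma^2+\Vert\nabla f\Vert^2)$, i.e.\ $\mathcal{O}(1/\omega^2)=\mathcal{O}(T^{-2\theta})$. Its coupling back into the optimality gap produces the extra $\mathcal{O}(T^{-2\theta})$ term in \eqref{eq:theo22}.

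\textbf{Main obstacle.} The hard part is obtaining the $1/n$ factor on the leading term. The gradient noise must enter $n(f(\bar x_k)-f^*)$ only through $\bar{g}$, whose variance is $\sigma^2/n$. Meanwhile, the consensus and dual components see the full noise $n\sigma^2$, but only at order $\eta^2=\mathcal{O}(\omega^{-2})$. We will therefore keep two coupled recursions, one for the optimality gap and one for the consensus/dual/compression errors, rather than a single $U_k$. We then substitute the $\mathcal{O}(\omega^{-2})$ consensus bound into the gap recursion before unrolling.
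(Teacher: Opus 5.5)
Your plan is essentially the paper's proof. It has three stages: a crude $\mathcal{O}(n/T^{\theta})$ bound on the full Lyapunov function via P--\L{}, whose role is to supply the uniform bound \eqref{eq:proveoftheo27} and hence $\|\g_k^b\|^2\le 2L_fV_{4,k}=\mathcal{O}(n)$, which you need to close your consensus recursion; then a constant-rate recursion for $\breve{V}_k=V_k-V_{4,k}$ driven by $\mathcal{O}(n\eta^2)$ terms; then a P--\L{} recursion for $V_{4,k}$ in which the noise enters only as $\eta^2\sigma^2$ (since $\E\|\bg_k^s-\bg_k\|^2\le\sigma^2$) and the consensus bound is substituted before unrolling.

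One correction to your potential: the one-step inequality \eqref{eq:upperofV14} that you invoke holds for the paper's $V_k$, not for your $U_k$. The paper's dual error is $\vv_k+\frac{1}{\omega}\g_k^b$, because the $\x$-update contains $\omega\vv_k+\g_k^s$, not $\vv_k+\frac{1}{\beta_1\omega}\g_k^b$. The paper also weights the $\bp$-term by $\frac{1+\beta_1}{2}$ rather than $\frac{1}{2}$; with weight $\frac{1}{2}$ the cross term $\x_k^\top\bk\bp(\cdot)$ is dominated only if $\underline{\lambda}_L>1$, so your $U_k$ need not be nonnegative.
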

\begin{proof}
	See Appendix~\ref{app-convergence2}.
\end{proof} 

From the above theorem, it can be seen that the RCP-SGD algorithm converges to the global optimum under the P--\L~condition. However, its convergence rate is strictly slower than $\mathcal{O}(1/(nT))$ and requires an upper bound on the number of iterations $T$. To overcome these limitations, we presents the following result: when the P--\L~constant is known, our proposed algorithm achieves linear speedup with an $\mathcal{O}(1/(nT))$ convergence rate to the global optimum without imposing any restriction on the iteration count $T$.  

 \begin{theorem}\label{theo:convergence3}
 	Suppose Assumptions~\ref{as:strongconnected}--\ref{as:compressor} hold, and P--Ł constant $\nu$ is known in advance. In Algorithm~\ref{Al:RCP-SGD}, let~$\gamma_k=\beta_1\omega_k,~\omega_k=\beta_0(k+t_1)$, $\alpha_x\in(0,\frac{1}{r})$, $\eta_k=\frac{\beta_2}{\omega_k}$, and $h_k=h_0^k$, $\forall k\in\mathbb{N}$ where $\beta_0\in[\tilde{c}\nu\beta_2/4,\nu\beta_2/4)$, $\beta_1>\bar{c}_1$, $0<\beta_2<\bar{c}_2$, $t_1>\bar{c}_5$, $h_0\in(0,1/t_1)$ with $1>\tilde{c}>0$ being a constant, $\bar{c}_1$, $\bar{c}_2$, and $\bar{c}_5$ being constants given in Appendix~\ref{app-convergence3}. Then, for any $T\in\mathbb{N}$, we have
 \begin{align*}
     &\E[\frac{1}{n}\sum_{i=1}^n\Vert x_{i,T}-\bar{x}_T\Vert^2]=\mathcal{O}(\frac{1}{T^2}),\addtag\label{eq:convergenceofth31}\\
     &\E[f(\bar{x}_{k})-f^*)]=\mathcal{O}(\frac{1}{nT})+\mathcal{O}(\frac{1}{T^2}).\addtag\label{eq:convergenceofth32}
 \end{align*}
\begin{proof}
     See Appendix~\ref{app-convergence3}.
 \end{proof}
\end{theorem}
 \begin{remark}
     From \eqref{eq:convergenceofth32}, it follows that RCP-SGD achieves a linear speedup convergence rate $\mathcal{O}(1/(nT))$. Notably, the same linear speedup properties has been established in~\cite{reisizadeh2020fedpaq,lan2020communication} and~\cite{yi2022primal}. However, the works \cite{reisizadeh2020fedpaq,lan2020communication} assume convexity of the cost function, while \cite{yi2022primal} does not consider compressed communication.
 \end{remark}

 \subsection{Proof Sketch}\label{sec:proofsketch}
 This subsection provides the proof sketch for Theorems~\ref{theo:convergence1}--\ref{theo:convergence3}. First, for notational convenience, we denote $\g_k^s,\x_k, \x^c_k$, and $\bx_k$ as the compact form of $\tilde{\nabla} f_{i,k},x_{i,k},x_{i,k}^c$, and $\bar{x}_k$, respectively, and denote $\g_k^b=\nabla\tilde{f}(\bx_{k})$. To guarantee the convergence of the algorithm, we construct a Lyapunov function $V_k=\sum_{i=1}^5 V_{i,k}$, which consists of consensus errors $\|\x_k-\bx_k\|^2$, compression errors $\Vert\x_{k}-\x^c_{k}\Vert^2$, optimization errors $n(f(\bar{x}_{k})-f^*)$ and the intermediate coupling terms of these errors (the detailed definition of $V_k$ is given in Appendix~\ref{app-convergence0}). We then analyze the relationship between these five terms at time $k+1$ and time $k$, and present these results in Lemma~\ref{lemma:lyapunov}. It is noteworthy that analyzing the Lyapunov function $V_k$ using the methods mentioned above is nontrivial. This is attributed to two key factors: (i) The definition of $V_k$ shows a coupling between the compression errors and the stochastic gradient errors, which undeniably complicates the analysis; (ii) The compressor considered in this paper exhibits both bounded relative and bounded absolute compression errors (see \eqref{eq:propertyofcompressors0}), and RCP-SGD is required to ensure that both types of errors contract. Consequently, due to these reasons, we have to use more stricter inequalities and tighter parameter designs to ensure the convergence.

\section{Adapted compressors enable differential privacy}\label{sec:privacy}
 In this section, we show that compression can ensure differential privacy. From Definition~\ref{def:differentialprivacy}, it is easy to know that differential privacy relies on the uncertainty or obfuscation introduced by the randomized mechanism $\mathbb{M}$. Combining the characteristics of stochastic compressors, it is natural to consider how to use compression to protect privacy. This is also the main problem we aim to investigate in this section.

It is crucial to note that merely relying on the uncertainty inherent in Assumption~\ref{as:compressor} is insufficient to ensure privacy. This is because Assumption~\ref{as:compressor} is general enough to encompass the standard uncompressed case. Specifically, if we consider the boundary case where $\varphi=1$ and $\sigma_{\mathcal{C}}=0$, Assumption \ref{as:compressor} degenerates to the uncompressed setting, i.e., $\mathcal{C}(x)=x$. Clearly, privacy cannot be guaranteed in this scenario. While improved compressors have been proposed to simultaneously achieve both convergence and privacy, such results are contingent on a specific compressor structure. This reliance on a particular compressor is a common limitation faced by existing methods~\cite{wang2022quantization,huo2024compression,agarwal2018cpsgd}. Therefore, to enhance the algorithm's privacy across a wider range of compressors, this section focuses on transforming a general class of compressors satisfying Assumption \ref{as:compressor} to strengthen the privacy guarantees of the RCP-SGD algorithm.

By Definition~\ref{def:differentialprivacy}, to achieve differential privacy, the outputs of the compressor for different inputs must be statistically similar. To this end, we propose the following transformation for compressors. Specifically, any compressor that satisfies Assumption \ref{as:compressor} can be improved through the transformation method defined below, thereby enhancing privacy.
\begin{definition}~\label{def:privacycom}
Given any compressor $\mathcal{C}(x)$ satisfying Assumption \ref{as:compressor}, the privacy-enhanced transformation for $\mathcal{C}'(x)$ is defined as follows.
\begin{align}\label{eq:compressedpev}
    \mathcal{C}'(x)= \begin{cases} \mathcal{C}(x) , & w.p.~~~1-q\\ 0, & w.p.~~~q\end{cases}
\end{align}
where $0<q<1$ is the parameter used to control the probability of confusion.
\end{definition}
\begin{remark}
According to Definition \ref{def:privacycom}, the essence of the transformed compressor $\mathcal{C}'(x)$ is to output $0$, which is independent of the input $x$, with a positive probability $q$. From Definition~\ref{def:privacycom} and Assumption~\ref{as:compressor}, one obtains that
\begin{align*}
    \mathbb{E}_\mathcal{C'}\left[
        \left\Vert \frac{\mathcal{C}'(x)}{r}-x\right\Vert^2\right]&\leq (1-q)(1-\varphi)\left\Vert x\right\Vert^2+(1-q)\sigma_{\mathcal{C}}\\
        &~~~+q\|x\|^2\\
        &\leq (1-\varphi(1-q))\|x\|^2+(1-q)\sigma_{\mathcal{C}}.
\end{align*}
Note that the transformed compressor $\mathcal{C}'(x)$ still satisfies Assumption~\ref{as:compressor}, characterized by $\varphi' = \varphi(1-q)$ and $\sigma_{\mathcal{C}}' = (1-q)\sigma_{\mathcal{C}}$.Consequently, the RCP-SGD algorithm is still compatible with the transformed compressor $\mathcal{C}'(x)$. Furthermore, the transformed compressor $\mathcal{C}'(x)$ can be viewed as a combination of a compressor and an event-triggered algorithm. As per Definition \ref{def:privacycom}, the compressor $\mathcal{C}'(x)$ does not transmit information with probability $q$. Consequently, the transformed compressor $\mathcal{C}'(x)$ also improves communication efficiency compared to the initial compressor $\mathcal{C}(x)$.
\end{remark}

Now, we are ready to show that the privacy can be preserved under such privacy-enhanced compressors.
\begin{theorem}\label{theo:privacy1}
    Suppose Assumptions~\ref{as:strongconnected}--\ref{as:finite} hold. For any compressor $\mathcal{C}(\cdot)$ satisfying Assumption~\ref{as:compressor}, by employing its privacy-enhanced version $\mathcal{C}'(\cdot)$ given in Definition~\ref{def:privacycom} and replacing the parameters $\varphi$ and $\sigma_{\mathcal{C}}$ in Theorem~\ref{theo:convergence1} with $\varphi(1-q)$ and $(1-q)\sigma_{\mathcal{C}}$ respectively, Algorithm~\ref{Al:RCP-SGD} achieves $(0,\delta)$-differential privacy with $\delta=1-q$ for the local data of any agent $i$, while still ensuring convergence.
\end{theorem}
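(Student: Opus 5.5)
Convergence follows from the remark after Definition~\ref{def:privacycom}. There we showed that $\mathcal{C}'$ satisfies Assumption~\ref{as:compressor} with $\varphi'=\varphi(1-q)\in(0,1]$ and $\sigma_{\mathcal{C}}'=(1-q)\sigma_{\mathcal{C}}\geq0$, with the same scaling constant $r$. So every hypothesis of Theorem~\ref{theo:convergence1} holds once $\varphi,\sigma_{\mathcal{C}}$ are replaced by these values. The constants $c_0,\beta_3$ of Appendix~\ref{app-convergence1} are recomputed accordingly, and \eqref{eq:theo11}--\eqref{eq:coro12} follow verbatim. The only care needed is to check that the constants stay finite and positive for $q\in(0,1)$; this holds since $\varphi'>0$.

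For privacy I would couple the two executions on adjacent datasets $\mathcal{S}_i^{(1)},\mathcal{S}_i^{(2)}$, which differ only at time $k$. I will take the observable output of $\mathbb{M}$ to be the sequence of messages agent $i$ broadcasts, $\{\mathcal{C}'((x_{i,t}-x^c_{i,t})/h_t)\}_{t\geq0}$. The received $x_{-i}$ is fixed input, and all other quantities ($\hat x_{i,t}$, $x^c_{i,t}$, and hence neighbours' updates) are deterministic functions of these messages. I would then proceed in three steps.

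First, for $t\leq k$ the states $x_{i,t},v_{i,t},x^c_{i,t}$ do not depend on $\xi_{i,k}$, because $\xi_{i,k}$ enters only through $\tilde\nabla f_{i,k}$ in the update of $x_{i,k+1}$. So the messages up to time $k$ have identical conditional distributions under both datasets.

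Second, I introduce the event $A$ that the Bernoulli switch of $\mathcal{C}'$ outputs $0$ at every time $t>k$. The switches are independent of data and of $\mathcal{C}$'s randomness. On $A$, the outputs after time $k$ are identically zero under both datasets, so by the first step $\mathbb{P}\{\mathbb{M}(\mathcal{S}_i^{(1)})\in\mathcal{H}_i, A\}=\mathbb{P}\{\mathbb{M}(\mathcal{S}_i^{(2)})\in\mathcal{H}_i, A\}$.

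Third, I split $\mathbb{P}\{\mathbb{M}(\mathcal{S}_i^{(1)})\in\mathcal{H}_i\}\leq\mathbb{P}\{\cdot,A\}+\mathbb{P}(A^c)$. Using the second step, this is at most $\mathbb{P}\{\mathbb{M}(\mathcal{S}_i^{(2)})\in\mathcal{H}_i\}+\mathbb{P}(A^c)$, which is \eqref{eq:dp} with $\epsilon=0$ and $\delta=\mathbb{P}(A^c)$.

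The main obstacle is making $\mathbb{P}(A^c)$ equal to $1-q$ rather than $1-q^{T-k}$. An adversary can see the difference at any later time, because the perturbation propagates through $x_{i,t}$ for all $t>k$. I would first try to argue that only the single message at time $k+1$ carries fresh information about $\xi_{i,k}$, with $A$ taken as ``the switch outputs $0$ at time $k+1$'' so that $\mathbb{P}(A^c)=1-q$. This matches the per-data-point formulation in \cite{wang2022quantization,huo2024compression}, where each sample is charged once at its first release. If this interpretation of the observation cannot be justified, the honest bound is $\delta=1-q^{T-k}$, or equivalently one restricts $\mathcal{H}_i$ to single-round outputs. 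I would state the interpretation explicitly so that the claimed $\delta=1-q$ is exactly the single-release coupling bound.
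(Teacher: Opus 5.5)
Your convergence part is the paper's argument: Appendix~\ref{app-privacy1} only invokes the remark after Definition~\ref{def:privacycom} together with Appendices~\ref{app-convergence0}--\ref{app-convergence1}. Your Steps~1--3 are a correct coupling. They prove $(0,1-q^{T-k})$-differential privacy for agent $i$'s full broadcast sequence, where the exponent counts the broadcasts after the perturbed update $x_{i,k+1}$.

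The step that fails is your proposed route to $\delta=1-q$: taking $A$ to be ``the switch is off at time $k+1$'' and claiming that only that message carries information about $\xi_{i,k}$. With this $A$, the Step~2 equality $\mathbb{P}\{\mathbb{M}(\mathcal{S}_i^{(1)})\in\mathcal{H}_i,A\}=\mathbb{P}\{\mathbb{M}(\mathcal{S}_i^{(2)})\in\mathcal{H}_i,A\}$ is false. An off switch only gives $\hat x_{i,k+1}=x^c_{i,k+1}$ and freezes $x^c_{i,\cdot}$. The perturbation sits in $x_{i,k+1}$ itself, and \eqref{eq:iterationxp2} carries it into every $x_{i,t}$, $t\geq k+2$, partly through $\tilde\nabla f_{i,k+1}$ evaluated at the perturbed point. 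Hence it reaches the law of every later broadcast.

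No other argument can rescue $1-q$ for full-trajectory observations. Take $\mathcal{C}$ to be the identity, which satisfies Assumption~\ref{as:compressor} with $r=1$, $\varphi=1$, $\sigma_{\mathcal{C}}=0$. For generic data, the two broadcast sequences coincide only when every post-$k$ switch is off. To see this, look at the first post-$k$ round where either run's switch is on. There one message is $(x_{i,t}-x^c_{i,t})/h_t$, and the other is either $0$ or the same expression with identical $x^c_{i,t}$ but different $x_{i,t}$. So the set of outputs produced under $\mathcal{S}_i^{(1)}$ with at least one post-$k$ switch on has probability $1-q^{T-k}$ under $\mathcal{S}_i^{(1)}$ and $0$ under $\mathcal{S}_i^{(2)}$. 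Your $\delta$ is therefore tight, and it tends to $1$ as the horizon grows.

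The paper's proof reaches $1-q$ only through \eqref{eqn:privacy2}. That line asserts that all events at times $t\neq k$ have equal probability under the two datasets, which is exactly the single-release claim above, and it is false for $t>k$. Conversely, the time-$k$ broadcast, which the paper treats as the only affected one, is actually unaffected, as your Step~1 shows.

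What the paper's last step genuinely establishes is a one-round bound. For any round $t$ and any random inputs $X,Y$ independent of that round's switch, $\mathbb{P}\{\mathcal{C}'(X)\in H\}-\mathbb{P}\{\mathcal{C}'(Y)\in H\}=(1-q)\bigl(\mathbb{P}\{\mathcal{C}(X)\in H\}-\mathbb{P}\{\mathcal{C}(Y)\in H\}\bigr)\leq 1-q$. So your fallback of restricting $\mathcal{H}_i$ to a single round's output is the reading under which $\delta=1-q$ is true. It holds for every round, not only $k+1$, and needs no ``fresh information'' argument.

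Note that this restriction and your bound $1-q^{T-k}$ are two different statements, not equivalent ones. For observation of the whole trajectory, state $\delta=1-q^{T-k}$ rather than trying to prove the theorem as literally written.
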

\begin{proof}
    See Appendix~\ref{app-privacy1}.
\end{proof}

\begin{remark}
	According to Definition \ref{def:privacycom}, the approach of the transformed compressor $\mathcal{C}'(x)$ for achieving privacy is similar to existing methods that map sensitive information to other spaces \cite{shoukry2016privacy,lu2018privacy,yan2021distributed,chen2022privacy,zhang2018admm,alexandru2020cloud,lou2017privacy}. However, this method does not require additional computational resources, unlike encryption-based methods \cite{shoukry2016privacy,lu2018privacy,yan2021distributed,chen2022privacy,zhang2018admm,alexandru2020cloud}. Furthermore, compared to the approach proposed by Lou et al. \cite{lou2017privacy}, which protects privacy through projection operations and asynchronous heterogeneous step-size optimization mechanisms, the transformed compressor method here achieves stricter differential privacy (see Theorem \ref{theo:privacy1}) and additionally enhances communication efficiency.
\end{remark}

\begin{remark}
	According to Theorem \ref{theo:privacy1}, for any compressor $\mathcal{C}(x)$ satisfying Assumption \ref{as:compressor}, the RCP-SGD algorithm can simultaneously achieve convergence and guarantee $(0,\delta)$-differential privacy for the local data of any agent $i$ at time step $k$ under its transformed version. Compared to results in \cite{wang2022quantization,huo2024compression}, the privacy parameter $\delta$ in Theorem \ref{theo:privacy1} is independent of the dimension $d$. The privacy analysis here is not limited to specific compressors but is effective for a general class of compressors~(Assumption \ref{as:compressor}). It is worth noting that while the compressor transformation in Definition~\ref{def:privacycom} can simultaneously achieve convergence and differential privacy, a trade-off between privacy level and convergence rate is still required. More specifically, according to Theorem~\ref{theo:privacy1}, a larger parameter $q$ implies a higher level of privacy; however, an increase in $q$ leads to a decrease in the parameter $\varphi$ of the transformed compressor.  By combining Theorem~\ref{theo:convergence1}, \eqref{eq:upperofx3}, and the definitions of parameters $\varphi_1 = \alpha_xr\varphi$ and $\epsilon_9 = 11 + \frac{16}{\varphi_1}$ (detailed in Appendices~\ref{app-convergence0} and~\ref{app-convergence1}), it can be concluded that the convergence rate of the algorithm diminishes as $q$ increases. This illustrates the necessity of a trade-off between the privacy level and the convergence rate.
\end{remark}

\section{simulation}\label{simulation}
This section validates the effectiveness of RCP-SGD through simulation experiments. First, consider a distributed optimization problem with $n=10$ agents communicating over a ring graph. Specifically, all agent aim to address the following nonconvex distributed binary classification problem \cite{yi2021linear,sun2019distributed}.
\begin{align*}
	&~~~~~\min_{x}f(x)=\frac{1}{10}\sum_{i=1}^{10} f_i(x),\\\addtag\label{eq:simulation0}
 &f_i\left(x_i\right)=\frac{1}{m} \sum_{j=1}^m \log \left(1+\exp \left(-u_{i j} x_i^{\top} z_{ij}\right)\right)+\sum_{s=1}^d \frac{\lambda \alpha x_{i, s}^2}{1+\alpha x_{i, s}^2},
\end{align*}
where $z_{ij}\in\R^d$ is the feature vector, $u_{ij}\in\{-1,1\}$ is the label and $x_{i, s}$ is the $s$-th coordinate of $x_i$. We use the breast cancer dataset\footnote{https://archive.ics.uci.edu/dataset/14/breast+cancer. Prior to the experiments, we apply min-max normalization to the data. This preprocessing step benefits the experiments but may alter the variance of the data. Further details can be found in~\cite[Chapter 3]{han2022data}.} to train the model. The goal of the agents is to determine whether a breast cancer is malignant or benign by training the model \eqref{eq:simulation0}. Furthermore, to enhance communication efficiency, this section considers the following compressors.

\begin{table*}[h]
	\centering
	\caption{Parameter setting for different algorithms}
	\begin{tabular}{lc c c c c c c}\hline
		Algorithms & Compressors &$\gamma$&$\omega$&$\eta$&$\alpha_x$&$m_k$&$q$ \\\hline
		DSGD &---&---&---&0.1&---&---&---\\
		Choco-SGD &$\mathcal{C}_1$&0.2&---&0.1&---&---&---\\
		RCP-SGD-1&$\mathcal{C}_1$&5&0.5&$0.08/k^{10^{-2}}$&---&$1/k$&---\\
		RCP-SGD-2&$\mathcal{C}_3$&2&0.5&$0.08/k^{10^{-2}}$&0.8&$1/k$&---\\ RCP-SGD-3 &$\mathcal{C}_2$&2&0.5&$0.08/k^{10^{-2}}$&0.8&$1/k$&---\\ RCP-SGD-4 &$\mathcal{C}_4$&2&0.5&$0.08/k^{10^{-2}}$&0.8&$1/k$&---\\ RCP-SGD-5 &$\mathcal{C}'_3$&2&0.5&$0.08/k^{10^{-2}}$&0.8&$1/k$&0.2\\unRCP-SGD &---&2&0.5&$0.08/k^{10^{-2}}$&0.8&$1/k$&---\\\hline\\
	\end{tabular}
	\label{tab:parameter}
\end{table*}

\begin{itemize}
    \item Biased $b$-bits quantizer\cite{koloskova2019decentralized}:
    \begin{align*}
    	\mathcal C_1(x):=\frac{\Vert x\Vert}{\xi}\cdot \text{sign}(x)\cdot 2^{-(b-1)}\circ \left\lfloor\frac{2^{(b-1)}\vert x\vert}{\Vert x\Vert} +u\right\rfloor,\addtag\label{eq:compressor1}
    \end{align*}
where $\xi=1+\min\{\frac{d}{2^{2(b-1)}},\frac{\sqrt{d}}{2^{(b-1)}}\}$, the vector $u$ is a random dithering vector uniformly sampled from $[0,1]^d$, $\circ$ denotes the Hadamard product, and $\text{sign}(\cdot)$, $|\cdot|$, and $\lfloor\cdot\rfloor$ represent the element-wise sign, absolute value, and floor functions, respectively. In this simulation experiment, the parameter is set to $b=2$.
     \item Sign norm compressor\cite{yi2022communication}:
    \begin{align*}
    	\mathcal C_2(x):=\frac{\Vert x\Vert_\infty}{2}\text{sign} (x).\addtag\label{eq:compressor2}
    \end{align*}
     \item The improved $b$-bits quantizer:
    \begin{align*}
    	\mathcal C_3(x):=\frac{\Phi(\Vert x\Vert)}{\xi}\cdot \text{sign}(x)\cdot 2^{-(b-1)}\circ \left\lfloor\frac{2^{(b-1)}\vert x\vert}{\Vert x\Vert} +u\right\rfloor,\addtag\label{eq:compressor3}
    \end{align*}
Here, $\Phi(x):\mathbb{R}\mapsto\mathbb{Z}$ is a mapping that outputs $\lfloor x\rfloor+1$ with probability $x-\lfloor x\rfloor$ and $\lfloor x\rfloor$ with probability $\lfloor x\rfloor-x+1$.
 \item The improved Sign-norm compressor:
    \begin{align*}
    	\mathcal C_4(x):=\frac{\Phi(\Vert x\Vert_\infty)}{2}\text{sign} (x).\addtag\label{eq:compressor4}
    \end{align*}
\end{itemize}	
It is evident that the aforementioned compressors all satisfy Assumption \ref{as:compressor}. Furthermore, the compressors $\mathcal{C}_3(x)$ and $\mathcal{C}_4(x)$ are improved versions of the compressors $\mathcal{C}_1(x)$ and $\mathcal{C}_2(x)$, respectively. Compared to the original versions, the mapping $\Phi$ compresses real numbers into integers, resulting in fewer bits required for transmission by the compressors $\mathcal{C}_3(x)$ and $\mathcal{C}_4(x)$ compared to $\mathcal{C}_1(x)$ and $\mathcal{C}_2(x)$. However, it is noteworthy that these two improved compressors exhibit both relative and absolute compression errors simultaneously.

Subsequently, the RCP-SGD algorithm is compared with the Distributed SGD algorithm (DSGD) and the compressed algorithm \cite{koloskova2019decentralized1} (Choco-SGD) under different parameters, with specific parameter settings detailed in Table \ref{tab:parameter}. Similar to Chapter 5, the residual $R_k\triangleq\min_{t\leq k}\{\frac{1}{n}\sum_{i=1}^n\Vert x_{i,t}-\bar{x}_t\Vert^2+\|\nabla f(\bar{x}_t) \|^2\}$ is used here to evaluate the convergence of the algorithms. As shown in Figure \ref{fig:bit1}, the RCP-SGD algorithm ensures convergence under different compressors, even for those with both relative and absolute compression errors. Furthermore, it can be observed that the RCP-SGD algorithm, when using the improved compressors $\mathcal{C}_3(x)$ and $\mathcal{C}_4(x)$, requires fewer transmitted bits to achieve the same level of accuracy compared to using the original compressors $\mathcal{C}_1(x)$ and $\mathcal{C}_2(x)$. This indicates that, despite the additional absolute compression error introduced by the mapping $\Phi$, it further enhances communication efficiency. Moreover, compared to the Choco-SGD algorithm, the RCP-SGD algorithm achieves similar or better convergence rates across different compressors and is applicable to a broader range of compressors.

\begin{figure}
	\centering
	\includegraphics[width=1\linewidth]{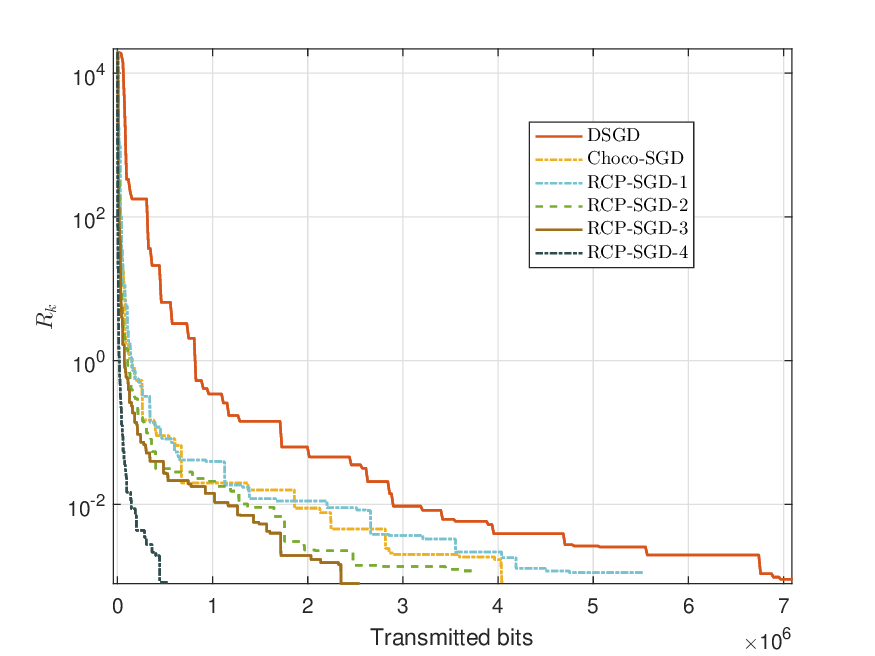}
	\caption{The evolution of residual with respect to the transmitted bits under DSGD, Choco-SGD, and RCP-SGD}
	\label{fig:bit1}
\end{figure}

\begin{figure}
	\centering
    \includegraphics[width=1\linewidth]{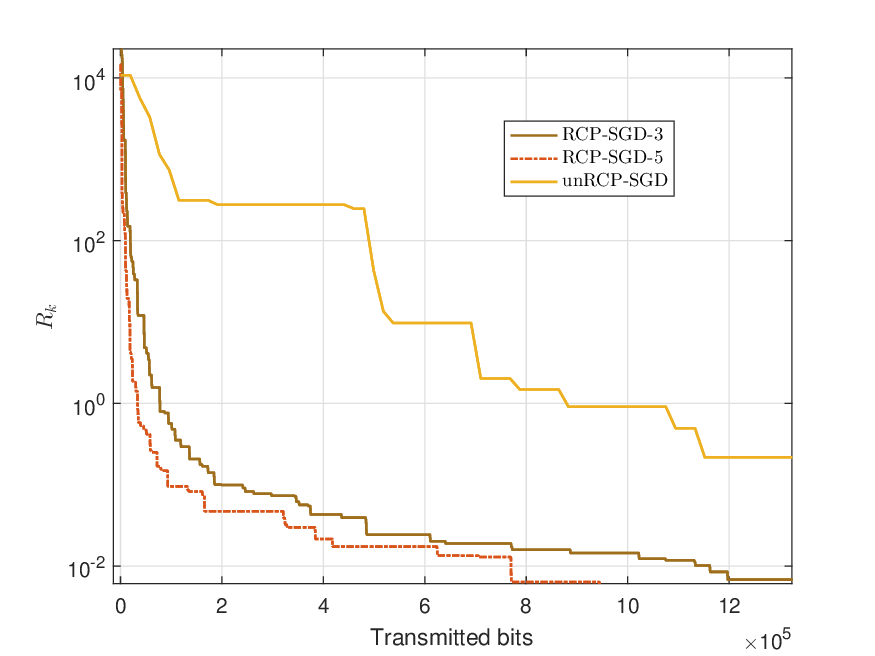}
	\caption{The evolution of residual with respect to the transmitted bits under RCP-SGD-3, RCP-SGD-5, and unRCP-SGD}
	\label{fig:uncompressed}
\end{figure}

Then we further validates the effectiveness of the compressor and its transformed version on the communication efficiency of the algorithm. Specifically, taking the RCP-SGD-3 algorithm as an example, it is compared with the algorithm RCP-SGD-5 under the transformed compressor and the uncompressed RCP-SGD (unRCP-SGD). Similarly, the specific parameter settings can be found in Table \ref{tab:parameter}. As shown in Figure \ref{fig:uncompressed}, both the initial compressor $\mathcal{C}_3$ and its transformed version $\mathcal{C}'_3$ effectively reduce the algorithm's requirement for transmitted bits. Furthermore, it can be observed that the transformed compressor $\mathcal{C}'_3$ outperforms the initial compressor $\mathcal{C}_3$ in terms of saving transmitted bits. This is because the transformed compressor has a probability $q$ of not transmitting information, which can be viewed as a combination of a compressor and an event-triggered mechanism. Consequently, it requires fewer communication resources than the initial compressor. Next, the impact of the topology graph on the RCP-SGD algorithm is validated. Specifically, the convergence of the RCP-SGD-2 algorithm is compared under different topologies: ring, torus, and fully connected graphs. As illustrated in Figure \ref{fig:graph1}, the RCP-SGD-2 algorithm ensures convergence across these different topologies, with the influence of the topology on the algorithm's convergence being minimal.

\begin{figure}
	\centering
	\includegraphics[width=1\linewidth]{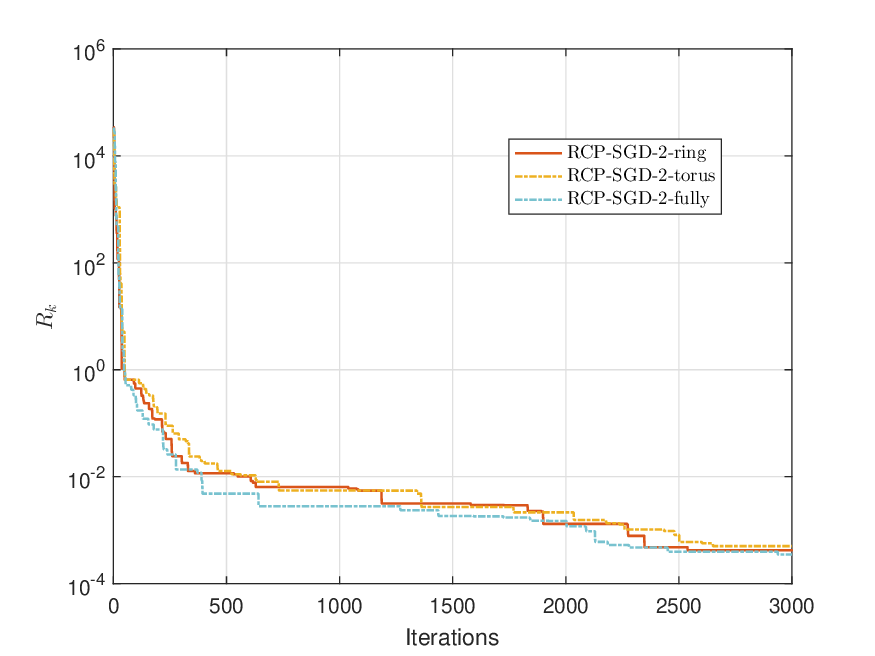}
	\caption{The evolution of residual under RCP-SGD with different graphs}
	\label{fig:graph1}
\end{figure}

To validate the effectiveness of the transformed compressor in the RCP-SGD-5 algorithm with respect to privacy, this section introduces a powerful DLG attack \cite{zhu2019deep}, which can infer the feature vector of problem \eqref{eq:simulation0} through shared gradient information. To evaluate privacy, the attacker's estimation error is defined as $E_k \triangleq \Vert \hat{z}_{i,k} - z_{i} \Vert^2$, where $\hat{z}_{i,k}$ is the DLG attacker's estimate of agent $i$'s feature vector at time step $k$, and $z_{i}$ is the true feature vector of agent $i$. The parameters of RCP-SGD-5 are set as specified in Table \ref{tab:parameter}. It is then assumed that the DLG attacker's goal is to infer the feature vector of agent $2$. At each time step $k$, the agent obtains a noisy gradient $\tilde{\nabla} f_{i,k} = \nabla f_i(x_{i,k}) + \delta_s$, where $\delta_s \sim \mathcal{N}(0, 0.005)$ represents the noise. As shown in Figure \ref{fig:privacy1}, under the DSGD algorithm, the DLG attack can successfully estimate the feature vector of agent $2$, whereas under the RCP-SGD-5 algorithm, it fails to do so. This demonstrates that the RCP-SGD-5 algorithm can ensure the privacy of the agents' local data under the DLG attack.

\begin{figure}
	\centering
	\includegraphics[width=1\linewidth]{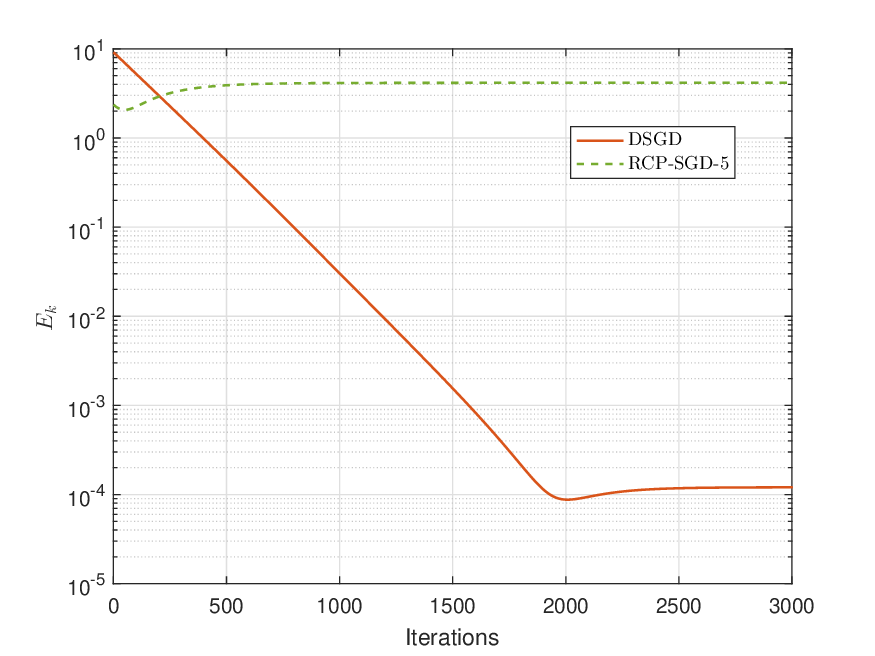}
	\caption{The evolution of estimate error of DLG uncer DSGD, and RCP-SGD-5}
	\label{fig:privacy1}
\end{figure}

\section{conclusion}\label{conclusion}
In this paper, we investigated distributed nonconvex optimization under limited communication with privacy concern. pecifically, we propose a robust compressed primal-dual SGD algorithm (RCP-SGD) that works for a broad class of compressors simultaneously exhibiting bounded absolute error and bounded relative error. For arbitrary smooth (possibly non-convex) objective functions, we proved that RCP-SGD achieves linear speedup convergence rate $\mathcal{O}(1/\sqrt{nT})$, where $T$ and $n$ are the number of iterations and agents, respectively. If the global cost function additionally satisfies the Polyak-Łojasiewicz condition, we proved that the proposed algorithm converge to the global optimum with a linear sppedup cpnvergence rate $\mathcal{O}(1/(nT))$. Notably, the optimal convergence rate for centralized stochastic optimization is known to be $\mathcal{O}(1/T)$. Finally, to exploit the inherent privacy benefits of compression, we introduce a simple yet effective transformation of the compressor. We proved that under the transformed compressors, RCP-SGD achieves rigorous differential privacy guarantees without any additional noise or degradation in convergence accuracy. Future work includes extending the study to directed graphs and online optimization.

\appendices
\section{Supporting Lemmas}
We first introduce some useful vector and matrix inequalities.
\begin{lemma}\label{lemma:lsmooth}
	\cite{liao2022compressed} Suppose the function $f(x): \R^d\mapsto\R$ is smooth with constant $L_f>0$, we have
 \begin{align*}
     \|\lf(x)\|^2\leq2L_f(f(x)-f^*).\addtag\label{eq:lsmooth}
 \end{align*}
	\end{lemma}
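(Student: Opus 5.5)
This is a one-line lemma: it is the standard consequence of $L_f$-smoothness combined with $f^*$ being a global lower bound on $f$. The plan is to apply the descent inequality \eqref{eq:smooth1} to a single gradient step of length $1/L_f$ and then use $f(y)\ge f^*$.

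Fix any $x\in\mathbb{R}^d$ and set $y=x-\frac{1}{L_f}\nabla f(x)$. By the upper side of the quadratic bound \eqref{eq:smooth1} (stated there for each $f_i$, but it holds verbatim for any $L_f$-smooth $f$),
\begin{align*}
f(y)&\le f(x)+(y-x)^\top\nabla f(x)+\frac{L_f}{2}\Vert y-x\Vert^2\\
&= f(x)-\frac{1}{L_f}\Vert\nabla f(x)\Vert^2+\frac{1}{2L_f}\Vert\nabla f(x)\Vert^2\\
&= f(x)-\frac{1}{2L_f}\Vert\nabla f(x)\Vert^2 .
\end{align*}
Since $f^*$ is the minimum value (Assumption~\ref{as:finite} gives $f^*>-\infty$), we have $f^*\le f(y)$. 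Substituting and rearranging gives $\Vert\nabla f(x)\Vert^2\le 2L_f\,(f(x)-f^*)$, which is \eqref{eq:lsmooth}.

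The only point needing care is the hypothesis on $f^*$: it must be a genuine global lower bound of $f$ over all of $\mathbb{R}^d$, not merely the infimum over some constraint set. In the paper's setting the problem \eqref{P2} is unconstrained, so this holds.

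When the lemma is later applied to the global cost $f=\frac1n\sum_i f_i$, its smoothness follows from Assumption~\ref{as:strongconnected} by the triangle inequality, with the same constant $L_f$. No step here is a real obstacle.
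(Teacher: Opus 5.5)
Your proof is correct: evaluating the descent inequality at $y=x-\frac{1}{L_f}\nabla f(x)$ and using $f(y)\ge f^*$ is the standard argument behind this result, which the paper does not prove itself but cites from \cite{liao2022compressed}. Your caveat that $f^*$ must be a genuine global lower bound is also relevant beyond the global cost $f$: in \eqref{eq:upperofV21} the paper applies the lemma to each $f_i$ with $f_i^*$, which tacitly requires every $f_i$ to be bounded below, and Assumption~\ref{as:finite} does not supply that.
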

\begin{lemma}(Lemma 4 in~\cite{yi2022primal})
    For any constants $a\in(0,1)$, we have
    \begin{align*}
        (1-a)^T\leq\frac{k!}{(aT)^k},\forall k,T\in\mathbb{N}.\addtag\label{eq:linearbound}
    \end{align*}
\end{lemma}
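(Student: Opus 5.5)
The plan is to pass through the exponential function and split the inequality into two elementary bounds. We will show
\begin{align*}
(1-a)^T\leq e^{-aT}\leq \frac{k!}{(aT)^k}.
\end{align*}

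For the first inequality we use $1-a\leq e^{-a}$, which holds for every real $a$ by convexity of $x\mapsto e^{-x}$: the function lies above its tangent line $1-x$ at $x=0$. Since $a\in(0,1)$, both sides are nonnegative, so raising to the power $T\in\mathbb{N}$ preserves the order and gives $(1-a)^T\leq e^{-aT}$.

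For the second inequality we set $x=aT\geq 0$ and use the power series $e^{x}=\sum_{j\geq0}x^j/j!$. Every term is nonnegative, so dropping all terms except the one with $j=k$ yields $e^{x}\geq x^k/k!$. When $x>0$, rearranging gives $e^{-x}\leq k!/x^k$, which is $e^{-aT}\leq k!/(aT)^k$.

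The only step needing care is the degenerate case $T=0$, where $x=0$. If $k=0$, the right-hand side is $0!/0^0=1$ under the usual convention, and the left-hand side is $(1-a)^0=1$, so the inequality holds with equality. If $k\geq1$, the right-hand side has a zero denominator. It is interpreted as $+\infty$, so the bound is trivial; we will state this convention explicitly. With that case handled, chaining the two inequalities proves \eqref{eq:linearbound}.

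There is no genuine obstacle. The lemma will later be used with $a$ depending on $\varphi_1$ or $\nu\beta_2$ and with a fixed $k$, to turn a geometrically decaying term into a polynomially decaying one such as $\mathcal{O}(1/T^{2})$.
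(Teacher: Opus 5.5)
Your proof is correct and complete. The chain $(1-a)^T\le e^{-aT}\le k!/(aT)^k$, with the second step obtained by keeping only the $j=k$ term of the series for $e^{aT}$, is the standard argument; the paper gives no proof of its own and simply imports the bound as Lemma~4 of \cite{yi2022primal}. Your separate treatment of $T=0$ is worthwhile, because the paper's $\mathbb{N}$ contains $0$ and the right-hand side is undefined there when $k\ge 1$, so reading it as $+\infty$ is needed for the statement to make sense.
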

\begin{lemma}
(Lemma 2 in~\cite{yi2022communication}) Suppose Assumption~\ref{as:strongconnected} holds, let $L$ be the Laplacian matrix of the graph $G$ and $K_n=\mathbf{I}_n-\frac{1}{n}\mathbf{1}_n\mathbf{1}_n^\top$. Then $L$ and $K_n$ are positive semi-definite, $L\leq\bar{\lambda}_L\mathbf{I}_n$, $\bar{\lambda}_{K_n}=1$, 
\begin{align}
&K_nL=LK_n=L,\label{eq:propertyofk}\\
&0\leq\underline{\lambda}_LK_n\leq L\leq\bar{\lambda}_LK_n.\label{eq:propertyofk1}
\end{align}
Moreover, there exists an orthogonal matrix $[r~R]\in\R^{n\times n}$ with $r=\frac{1}{\sqrt{n}}\mathbf{1}_n$ and $R\in\R^{n\times(n-1)}$ such that
\begin{align}
	&PL=LP=K_n,\label{eq:propertyofp}\\
	&\bar{\lambda}_L^{-1}\mathbf{I}_n\leq P\leq\underline{\lambda}_L^{-1}\mathbf{I}_n,\label{eq:propertyofp1}
\end{align}
where 
\begin{align*}
	P=\begin{bmatrix}
		r&R
	\end{bmatrix}
	\begin{bmatrix}
		\lambda_n^{-1}&0\\
		0&\Lambda_1^{-1}
	\end{bmatrix}
	\begin{bmatrix}
		r^\top\\
		R^\top\top
	\end{bmatrix},
\end{align*}
with $\Lambda_1=\diag([\lambda_2,\dots,\lambda_n])$ and $0<\lambda_2\leq\cdots\leq\lambda_n$ being the nonzero eigenvalues of $L$.
\end{lemma}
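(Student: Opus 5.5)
The plan is to derive every claim from the spectral decomposition of $L$. Because the graph is undirected, $w_{ij}=w_{ji}$, so $W$, $D$ and hence $L=D-W$ are symmetric. First I will establish positive semidefiniteness and identify the kernel through the quadratic form
\begin{align*}
x^\top L x=\tfrac{1}{2}\sum_{i,j} w_{ij}(x_i-x_j)^2\geq 0 .
\end{align*}
If $x^\top Lx=0$, then $x_i=x_j$ for every edge $(i,j)\in\mathcal{E}$. Since the graph is connected (Assumption~\ref{as:strongconnected}), propagating this equality along paths gives $x\in\mathrm{span}(\mathbf{1}_n)$. Also $L\mathbf{1}_n=0$ by the definition of $d_i$. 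So the eigenvalues are $0=\lambda_1<\lambda_2\leq\cdots\leq\lambda_n$, the eigenvector of $\lambda_1$ is $r=\frac{1}{\sqrt n}\mathbf{1}_n$, and the spectral theorem gives an orthonormal $R\in\R^{n\times(n-1)}$ with $[r~R]$ orthogonal and $L=R\Lambda_1R^\top$. Here $\bar\lambda_L=\lambda_n$ and $\underline\lambda_L=\lambda_2$. The kernel identification is the only step that genuinely uses connectivity, and it is the one I will write out carefully.

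Next come the $K_n$ facts. Orthogonality of $[r~R]$ gives $rr^\top+RR^\top=\mathbf{I}_n$, so $K_n=\mathbf{I}_n-rr^\top=RR^\top$. This is an orthogonal projection with eigenvalues $0$ (once) and $1$ ($n-1$ times). Hence $K_n$ is positive semidefinite and $\bar\lambda_{K_n}=1$. From $R^\top r=0$ and $R^\top R=\mathbf{I}_{n-1}$ I get $K_nL=RR^\top R\Lambda_1R^\top=L$, and similarly $LK_n=L$, which is \eqref{eq:propertyofk}. The bound $L\leq\bar\lambda_L\mathbf{I}_n$ is immediate from the eigenvalues. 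For \eqref{eq:propertyofk1}, the inequalities $\lambda_2\mathbf{I}_{n-1}\leq\Lambda_1\leq\lambda_n\mathbf{I}_{n-1}$ are preserved under the congruence $X\mapsto RXR^\top$. This yields $\underline\lambda_LRR^\top\leq L\leq\bar\lambda_LRR^\top$, that is, $\underline\lambda_LK_n\leq L\leq\bar\lambda_LK_n$, and $0\leq\underline\lambda_LK_n$ holds because $K_n\geq 0$.

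Finally, for $P$, expand $P=\lambda_n^{-1}rr^\top+R\Lambda_1^{-1}R^\top$. Using $R^\top r=0$ and $R^\top R=\mathbf{I}_{n-1}$ again,
\begin{align*}
PL=R\Lambda_1^{-1}\Lambda_1R^\top=RR^\top=K_n ,
\end{align*}
and in the same way $LP=K_n$, which gives \eqref{eq:propertyofp}. The eigenvalues of $P$ are $\lambda_n^{-1}$ together with $\lambda_2^{-1},\dots,\lambda_n^{-1}$, and all of them lie in $[\lambda_n^{-1},\lambda_2^{-1}]$. Since $P$ is symmetric, this gives $\bar\lambda_L^{-1}\mathbf{I}_n\leq P\leq\underline\lambda_L^{-1}\mathbf{I}_n$, which is \eqref{eq:propertyofp1}. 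Apart from the connectivity step noted above, everything is routine linear algebra.
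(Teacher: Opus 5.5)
Your proof is correct: the spectral decomposition $L=R\Lambda_1R^\top$, with $\ker L=\mathrm{span}(\mathbf{1}_n)$ obtained from connectivity and the quadratic-form identity (which uses the symmetric weights implicit in the undirected setting), yields every claim, including $PL=LP=RR^\top=K_n$ and the eigenvalue bounds on $P$. The paper gives no proof of its own and simply imports the lemma from the cited reference, where the standard argument is this same eigendecomposition, so your route matches it.
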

For simplicity of the proof, we denote some notations.
\begin{align*}
    &\x_k=[x_{1,k}^\top,\dots,x_{n,k}^\top]^\top\in\R^{nd} \\
    &\mathbf{g}^s_k=[\tilde{\nabla} f_{1,k}^\top,\dots,\tilde{\nabla} f_{n,k}^\top]\in\R^{nd}\\
    &\vv_k=[v_{1,k}^\top,\dots,v_{n,k}^\top]^\top\in\R^{nd}\\
    &\hx_k=\left[\hat{x}_{1,k}^\top,\dots,\hat{x}_{n,k}^\top\right]^\top\in\R^{nd}\\
    &\x_k^c=[{x^c}_{1,k}^\top,\dots,{x^c}_{n,k}^\top]^\top\in\R^{nd},
\end{align*}
we further denote $\bx_k=\mathbf{1}_n\otimes\bar{x}_k$, $\tilde{f}(\x_{k})=\sum_{i=1}^nf_i(x_{i,k}),~\bl=L\otimes \bi_d,~\bk=K\otimes \bi_d$, $\bp=P\otimes\bi_d$, $\bh=\frac{1}{n}({\mathbf 1}_n{\mathbf 1}_n^\top\otimes{\mathbf I}_d)$, $\g_k=\nabla\tilde{f}(\x_{k})$, $\bg_k=\bh\g_k$, $\g_k^b=\nabla\tilde{f}(\bx_{k})$, $\bg_k^b=\bh\g_k^b=\mathbf{1}_n\otimes\nabla f(\bar{x}_k)$. Then the update equations~\eqref{citerationx}--\eqref{eq:iterationv2} can be rewritten as the following compact form
\begin{align*}
		&~~~~\hx_{k}=\x_{k}^c+h_k\mathcal{C}((\x_{k}-\x_{k}^c)/h_k),\addtag\label{citerationx1}\\
	&~\x_{k+1}^c=(1-\alpha_x)\x_{k}^c+\alpha_x\hx_{k},\addtag\label{citerationxc1}\\
 &~\x_{k+1}=\x_{k}-\eta(\gamma \bl\hx_{k}+\omega  \vv_{k}+\mathbf{g}^s_k),\addtag\label{eq:iterationxp3}\\
	&~\vv_{k+1}=\vv_{k}+\eta\omega \bl\hx_{k}.\addtag\label{eq:iterationv3}
	\end{align*}
 From~\eqref{eq:iterationv3}, $\mathbf 1_n^\top L=L\mathbf 1_n=0$, and the fact that $\sum_{i=1}^n v_{i,0}=\mathbf{0}_d$, we have 
   \begin{align}\label{eq:propertyofv}
	\bv_{k+1}=\mathbf{0}_{nd}.
   \end{align}
Then from~\eqref{eq:iterationxp3}, $\mathbf 1_n^\top L=L\mathbf 1_n=0$, and~\eqref{eq:propertyofv}, one obtains that
\begin{align}\label{eq:propertyofbx}
	\bx_{k+1}=\bx_k-\eta\bg_k^s.
\end{align}
From Assumption~\ref{as:strongconnected}, one obtains that
\begin{align}\label{eq:propertyofbg1}
	\Vert\g^b_k-\g_k\Vert^2\leq L_f^2\Vert\bx_k-\x_k\Vert^2= L_f^2\Vert\x_k\Vert^2_\bk.
\end{align}
Furthermore, we have following useful equations
\begin{align*}
	&\Vert\bg^b_k-\bg_k\Vert^2=\Vert\bh(\g^b_k-\g_k)\Vert^2\leq L_f^2\Vert\x_k\Vert^2_\bk,\addtag\label{eq:propertyofbg2}\\
	&\Vert\g^b_{k+1}-\g^b_k\Vert^2\leq L_f^2\Vert\bx_{k+1}-\bx_k\Vert^2\\
	&~~~~~~~~~~~~~~~~~\leq \eta^2L_f^2\Vert\bg_k^s\Vert^2\addtag\label{eq:propertyofbg3},
\end{align*}
where the first inequality comes from~\eqref{eq:propertyofbg1} and $\lb_\bh=1$; the second inequality comes from Assumption~\ref{as:strongconnected}; the last inequality comes from~\eqref{eq:propertyofbx}. From Assumption~\ref{as:boundedvar}, we have
\begin{align*}
    &\E_{\xi_k}[\g^s_k]=\g_k,\addtag\label{eq:propertyofsg1}\\
    &\E_{\xi_k}[\Vert\g^s_k-\g_k\Vert^2]\leq n\sigma^2,\addtag\label{eq:propertyofsg2}\\
    &\E_{\xi_k}[\bg^s_k]=\E_{\xi_k}[\bh\g^s_k]=\bh\E_{\xi_k}[\g^s_k]=\bg_k.\addtag\label{eq:propertyofsg4}
\end{align*}
Combining~\eqref{eq:propertyofbg1},~\eqref{eq:propertyofsg2}, and Cauchy--Schwarz inequality, we have 
\begin{align*}
    \E_{\xi_k}[\Vert\g^s_k-\g_k^b\Vert^2]&\leq 2\E_{\xi_k}[\Vert\g^s_k-\g_k\Vert^2]+2\Vert\g^b_k-\g_k\Vert^2\\
    &\leq 2L_f^2\Vert\x_k\Vert^2_\bk+2n\sigma^2.\addtag\label{eq:propertyofsg3}
\end{align*}

\section{Auxiliary results}\label{app-convergence0}
We first construct some auxiliary functions and provide the following lemma
\begin{lemma}\label{lemma:lyapunov}
Suppose Assumptions~\ref{as:strongconnected}--\ref{as:finite}, and \ref{as:compressor} hold. Under Algorithm~\ref{Al:RCP-SGD}, if $\alpha_x\in(0,\frac{1}{r})$ and $\{\omega_k\}$ is non-decreasing, we have
\begin{align*}
    &\E_{\xi_k}[V_{1,k+1}]\leq V_{1,k}-\left\|\mathbf{x}_k\right\|_{\frac{\eta_k \gamma_k}{2} \mathbf{L}-\frac{\eta_k}{2} \mathbf{K}-\frac{\eta_k}{2}(1+5 \eta_k) L_f^2 \boldsymbol{K}}^2\\
    &~~~+\left\|\hat{\mathbf{x}}_k\right\|_{\frac{3\eta_k^2 \gamma_k^2}{2} \mathbf{L}^2}^2+ \frac{\eta_k}{2}(\gamma+2 \omega)\bar{\lambda}_L\left\|\hat{\mathbf{x}}_k-\mathbf{x}_k\right\|^2\\
&~~~ -\eta_k \omega_k \hat{\mathbf{x}}_k^{\top} \mathbf{K}\left(\mathbf{v}_k+\frac{1}{\omega_k} \g^b_k\right)\\
&~~~+\frac{6 \eta_k^2 \omega_k^2 \bar{\lambda}_L+\eta_k \omega_k}{4}\left\|\mathbf{v}_k+\frac{1}{\omega_k} \g^b_k\right\|^2_\bp+2n\sigma^2\eta_k^2,\addtag\label{eq:lyapunovofv1}\\
&\E_{\xi_k}[V_{2,k+1}]\leq V_{2,k}+\left(1+b_k\right)\eta_k\omega_k(1+\beta_1)\hx_k^\top\bk\left(\vv_k+\frac{1}{\omega_k}\g_k^b\right) \\
&~~~+\|\hx_k\|_{\left(1+b_k\right) \left(\frac{\eta_k^2\omega_k}{2}\left(\omega_k+\gamma_k\right) \bl+\frac{\eta_k^2}{2}\bk\right)}\\
&~~~+\frac{1}{2}\left(b_k+b_k\beta_1+\frac{\eta_k}{2}+\frac{b_k\eta_k}{2}\right)\left\|\vv_k+\frac{1}{\omega_k} \g_k^b\right\|_\bp^2\\
&~~~+(1+b_k)\bigg(\left(\frac{(1+\beta_1)^2}{\eta_k\omega_k^2}+\frac{1+\beta_1}{2\omega_k^2}\right)\frac{1}{\underline{\lambda}_L}\\
&~~~+\frac{1}{2}\bigg)\eta_k^2L_f^2\E_{\xi_k}\left[\Vert\bg_k^s\Vert^2\right]\\
&~~~+\E_{\xi_k}\left[(1+b_k)\eta_k\beta_1\hx_k^\top\bk(\g_{k+1}^b-\g_k^b)\right]\\
&~~~+\frac{1}{2\underline{\lambda}_L}(b_k+b_k^2)(1+\beta_1)\E_{\xi_k}\Vert\g_{k+1}^b\Vert^2,\addtag\label{eq:lyapunovofv2}\\
   &\E_{\xi_k}[V_{3,k+1}]\leq V_{3,k}-(1+b_k)\eta_k\gamma_k\hx_k^\top\bk(\vv_k+\frac{1}{\omega_k}\g_k^b)\\
   &~~~+\Vert\hx_k\Vert^2_{\eta_k(\omega_k\bk+\frac{b_k\gamma_k}{8}\bl)+\eta_k^2(\omega_k^2\bk+\frac{b_k}{2}\bk-\omega_k\gamma_k\bl)}\\
&~~~+\Vert\x_k\Vert_{(\frac{\eta_k(\omega_k+2)}{4})\bk+(\frac{\eta_k}{4}+\frac{3\eta_k^2}{2})L_f^2\bk}^2\\
&~~~-\E_{\xi_k}\left[(1+b_k)\frac{\eta_k\gamma_k}{\omega_k}\hx_k^\top\bk(\g_{k+1}^b-\g_k^b)+\frac{\eta_k}{8}\Vert\bg_k\Vert^2\right]\\
 &~~~-\Vert\vv_k+\frac{1}{\omega_k}\g_k^b\Vert_{\eta_k(\omega_k-3\underline{\lambda}_L^{-1})\bp-\eta_k^2(\underline{\lambda}_L^{-1}+\frac{\omega_k^2}{2}\bar{\lambda}_L)\bp-2b_k\eta_k\gamma_k\bp}^2\\
 &~~~+\frac{b_k}{2}(\|\x_{k+1}\|^2_\bk+\E_{\xi_k}\|\g_{k+1}^b\|^2)\\
 &~~~+(\frac{1+\eta_k}{2\eta_k\omega_k^2\underline{\lambda}_L^2}+\frac{b_k\gamma_k^2}{2\omega_k^2}+\frac{1}{4})\eta_k^2L_f^2\E_{\xi_k}[\Vert\bg_k^s\Vert^2]+n\eta_k^2\sigma^2,\addtag\label{eq:lyapunovofv3}\\
 &\E_{\xi_k}[V_{4,k+1}]\leq V_{4,k}-\frac{\eta_k}{4}\|\bg_k\|^2+\frac{\eta_kL_f^2}{2}\|\x_k\|_\bk^2-\frac{\eta_k}{4}\|\bg_k^b\|^2\\
 &~~~+\frac{\eta_k^2L_f}{2}\E_{\xi_k}\Vert\bg_k^s\Vert^2,\addtag\label{eq:lyapunovofv4}\\
 &\E_{\xi_k}[V_{5,k+1}]\leq (1-\frac{\varphi_1}{2}-\frac{\varphi_1^2}{2}\\
 &~~~+4\eta_k^2\gamma_k^2\bar{\lambda}_L^2r_0(1+\frac{2}{\varphi_1}))\Vert\x_k-\x_{k}^c\Vert^2\\
 &~~~+\Vert\x_k\Vert^2_{4\eta_k^2(1+\frac{2}{\varphi_1})(\gamma_k^2\bar{\lambda}_L^2+2L_f^2)\bk}\\
	&~~~+\Vert\vv_k+\frac{1}{\omega_k}\g_k^b\Vert^2_{4\eta_k^2(1+\frac{2}{\varphi_1})\omega_k^2\underline{\lambda}_L\bp}+(1+\frac{2}{\varphi_1})8n\eta_k^2\sigma^2\\
    &~~~+(8\eta^2\gamma^2\bar{\lambda}_L^2r^2+1)h_k^2\sigma_{\mathcal{C}},\addtag\label{eq:lyapunovofv5}
\end{align*}
where
\begin{align*}
&V_{1,k+1}=\frac{1}{2}\Vert\x_{k+1}\Vert_\bk^2\\
&V_{2,k+1}=\frac{1}{2}\Vert\vv_{k+1}+\frac{1}{\omega_{k+1}}\g_{k+1}^b\Vert^2_{\bp+\beta_1\bp}\\
&V_{3,k+1}=\x_{k+1}^\top\bk\bp(\vv_{k+1}+\frac{1}{\omega_{k+1}}\g_{k+1}^b)\\
    &V_{4,k+1}=n(f(\bar{x}_{k+1})-f^*),\\
    &V_{5,k+1}=\Vert\x_{k+1}-\x^c_{k+1}\Vert^2,\\
&~~~~~b_k=\frac{1}{\omega_k}-\frac{1}{\omega_{k+1}},\\
&~~~~~\varphi_1=\alpha_xr\varphi.
\end{align*}

\end{lemma}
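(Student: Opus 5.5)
The plan is to verify each of the five inequalities separately. For each $V_{i,k+1}$ I will substitute the compact updates \eqref{citerationx1}--\eqref{eq:iterationv3}, expand the quadratic or linear form, take $\E_{\xi_k}$, and bound every cross term by Young's inequality with the weights that appear in the statement. Throughout, the key algebraic tools are:
\begin{itemize}
\item the properties of the Laplacian-related matrices: $\bk\bl=\bl$, $\bp\bl=\bk$, $\underline{\lambda}_L\bk\le\bl\le\bar\lambda_L\bk$, and $\bar\lambda_L^{-1}\bi\le\bp\le\underline{\lambda}_L^{-1}\bi$ from \eqref{eq:propertyofk}--\eqref{eq:propertyofp1};
\item the stochastic gradient facts \eqref{eq:propertyofsg1}--\eqref{eq:propertyofsg3}, together with $\bv_k=0$, which gives $\bk\vv_k=\vv_k$;
\item the averaged dynamics \eqref{eq:propertyofbx} and the Lipschitz bounds \eqref{eq:propertyofbg1}--\eqref{eq:propertyofbg3}.
\end{itemize}
Wherever possible, $\hx_k$ is split as $\x_k+(\hx_k-\x_k)$. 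This way the $\bl$-weighted consensus terms appear in $\x_k$ plus a compression-error remainder $\|\hx_k-\x_k\|^2$, which is handled later via \eqref{eq:propertyofcompressors0}.

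\textbf{$V_1$.} Write $\bk\x_{k+1}=\bk\x_k-\eta_k(\gamma_k\bl\hx_k+\omega_k\vv_k+\bk\g_k^s)$ and add and subtract $\g_k^b/\omega_k$ so that the dual term appears as $\vv_k+\g_k^b/\omega_k$. Expanding $\frac12\|\cdot\|_\bk^2$ gives:
\begin{itemize}
\item a descent term $-\eta_k\gamma_k\x_k^\top\bl\hx_k$, rewritten with $\hx_k=\x_k+(\hx_k-\x_k)$ so that it yields $-\frac{\eta_k\gamma_k}{2}\|\x_k\|_\bl^2$ plus $\bar\lambda_L\|\hx_k-\x_k\|^2$ terms;
\item a cross term $-\eta_k\omega_k\x_k^\top\bk(\vv_k+\g_k^b/\omega_k)$, which I convert to the $\hx_k$ form in the statement;
\item a gradient-mismatch term $\eta_k\x_k^\top\bk(\g_k^b-\g_k)$, bounded by $\frac{\eta_k}{2}\|\x_k\|_\bk^2+\frac{\eta_k}{2}L_f^2\|\x_k\|_\bk^2$.
\end{itemize}
The remainder when switching from $\x_k$ to $\hx_k$ in the cross term is bounded by Young's inequality against $\|\vv_k+\g^b_k/\omega_k\|_\bp^2$, using $\bi\le\bar\lambda_L\bp$. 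The squared term is split into three pieces ($\bl\hx_k$, dual, gradient noise), which produces the factors $3/2$ and $6/4$ and the terms $2n\sigma^2\eta_k^2$ and $5\eta_k^2L_f^2$ via \eqref{eq:propertyofsg3}.

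\textbf{$V_2$ and $V_3$.} Both require handling the time-varying $1/\omega_{k+1}=1/\omega_k-b_k$. I will write
\[
\vv_{k+1}+\tfrac{1}{\omega_{k+1}}\g^b_{k+1}=\Big(\vv_k+\tfrac{1}{\omega_k}\g^b_k\Big)+\eta_k\omega_k\bl\hx_k+\tfrac{1}{\omega_k}(\g^b_{k+1}-\g^b_k)-b_k\g^b_{k+1}.
\]
The $b_k$ piece is bounded by Young's inequality with weight $b_k$, which gives the $(1+b_k)$ prefactors and the $(b_k+b_k^2)\|\g^b_{k+1}\|^2$ term. In $V_2$, the key cancellation is $\bp\bl=\bk$, which turns $\eta_k\omega_k(1+\beta_1)(\vv_k+\g_k^b/\omega_k)^\top\bp\bl\hx_k$ into the $\bk$ cross term. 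The gradient-difference term is controlled by \eqref{eq:propertyofbg3}.

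For $V_3$ I expand the bilinear form $\x_{k+1}^\top\bk\bp(\cdot)$ using both updates. The product $-\eta_k\omega_k\vv_k^\top\bk\bp(\vv_k+\g^b_k/\omega_k)$ is rewritten as $-\eta_k\omega_k\|\vv_k+\g_k^b/\omega_k\|_\bp^2$ plus a term in $\bk\g_k^b$, whose $\bh$-orthogonal part produces the $-\frac{\eta_k}{8}\|\bg_k\|^2$ contribution after a careful Young split. The term $-\eta_k\gamma_k\hx_k^\top\bl\bp(\cdot)=-\eta_k\gamma_k\hx_k^\top\bk(\cdot)$ is the one that cancels against $V_2$'s positive cross term when $\beta_1$ is chosen suitably. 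That cancellation is used later in the theorem; here it only has to appear in the displayed form.

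\textbf{$V_4$ and $V_5$.} For $V_4$ I apply \eqref{eq:smooth1} to $f$ along $\bar x_{k+1}=\bar x_k-\eta_k\bar g^s_k$ and take expectations with \eqref{eq:propertyofsg4}. The inner product $-\eta_k n\nabla f(\bar x_k)^\top\bar g_k$ is rewritten via the polarization identity $-ab=\frac12(\|a-b\|^2-\|a\|^2-\|b\|^2)$ and \eqref{eq:propertyofbg2}, which gives exactly the stated terms. For $V_5$, write
\[
\x_{k+1}-\x^c_{k+1}=(\x_k-\x_k^c)-\alpha_x(\hx_k-\x_k^c)+(\x_{k+1}-\x_k).
\]
The first two terms are $(1-\alpha_x r)(\x_k-\x^c_k)-\alpha_x h_k\big(\mathcal{C}(\cdot/h_k)-r\,\cdot/h_k\big)$. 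Applying Assumption~\ref{as:compressor} to these, together with $\alpha_x r<1$ and convexity of the squared norm, gives the contraction $1-\varphi_1$ plus $h_k^2\sigma_{\mathcal C}$. The increment $\x_{k+1}-\x_k$ is then added with Young weight $1+2/\varphi_1$. Inside that increment, $\bl\hx_k$ is bounded using \eqref{eq:propertyofcompressors0}, which yields the $r_0$ and $h_k^2\sigma_{\mathcal C}$ terms. The main obstacle I expect is the $V_3$ bookkeeping: getting the exact constants ($3\underline\lambda_L^{-1}$, $\frac{\omega_k+2}{4}$, and so on) requires choosing each Young weight so that the $b_k$, noise, and compression remainders land in exactly the stated forms. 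I would first do the $V_3$ computation with $b_k=0$, then add the $b_k$ corrections.
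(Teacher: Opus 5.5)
Your outline follows the paper's proof term by term. You use the same expansions of the compact updates, the same $(1+b_k)$/$(b_k+b_k^2)$ split for $1/\omega_{k+1}=1/\omega_k-b_k$, and the same weight $s=\varphi_1/2$ in $V_5$. In $V_4$ your polarization identity gives $-\frac{\eta_k}{2}$ where the statement has $-\frac{\eta_k}{4}$, which is harmless.

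The step that fails is in $V_3$, where you say a careful Young split ``produces the $-\frac{\eta_k}{8}\|\bg_k\|^2$ contribution''. Young's inequality only bounds an indefinite cross term from above by nonnegative quadratics, so it cannot create a descent term. The exact expansion of $\E_{\xi_k}[\x_{k+1}^\top\bk\bp(\vv_{k+1}+\frac{1}{\omega_k}\g_{k+1}^b)]$ contains no negative multiple of $\|\bg_k\|^2$ for it to come from. The quantity $\bg_k$ enters only through $\bk\g_k=\g_k-\bg_k$ in the gradient part of the $\x$-update. It does not come from a $\bk\g_k^b$ term, which has no $\bh$-component at all. The resulting cross term is $\eta_k\bg_k^\top\bp(\vv_k+\frac{1}{\omega_k}\g_k^b)=\frac{\eta_k}{\bar{\lambda}_L\omega_k}\bg_k^\top\bg_k^b$, which is nonnegative at consensus.

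In fact \eqref{eq:lyapunovofv3} with the minus sign is false. Take the following setup:
\begin{itemize}
\item $f_i(x)=\frac12\|x\|^2$ for all $i$, with $x_{i,0}=x_0\neq 0$ and $\vv_0=\mathbf{0}$;
\item the identity compressor ($r=\varphi=1$, $\sigma_{\mathcal{C}}=0$);
\item exact gradients, which are admissible for every $\sigma>0$;
\item constant $\omega$.
\end{itemize}
Then $\hx_0=\x_0$ and $\bk\x_0=\bk\x_1=0$, so $V_{3,0}=V_{3,1}=0$. The right-hand side, however, equals $-\frac{\eta}{8}\|\bg_0\|^2$ plus terms that are either nonpositive or bounded by $\mathcal{O}(\eta^2+\eta/\omega^2)\|\bg_0\|^2+n\eta^2\sigma^2$. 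It is therefore negative for small $\eta$, large $\omega$ and small $\sigma$.

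The sign in the statement is a typo. The paper's own derivation \eqref{eq:propertyofv32} ends with $+\frac{\eta_k}{8}\|\bg_k\|^2$, obtained from
\[
\eta_k\bg_k^\top\bp(\cdot)\le\frac{\eta_k}{8}\|\bg_k\|^2+2\eta_k\|\bp(\cdot)\|^2 .
\]
Combine this with $-\eta_k(\g_k-\g_k^b)^\top\bp(\cdot)\le\frac{\eta_k}{4}\|\g_k-\g_k^b\|^2+\eta_k\|\bp(\cdot)\|^2$ and $\bp^2\le\underline{\lambda}_L^{-1}\bp$. That is exactly where the $3\underline{\lambda}_L^{-1}$ coefficient you were worried about comes from. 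Lemma~\ref{lemma:fixedinquali} then uses this term with the plus sign and absorbs it into the $-\frac{\eta_k}{4}\|\bg_k\|^2$ supplied by $V_4$, which is why $\epsilon_6=\frac18-\cdots$. So aim for $+\frac{\eta_k}{8}\|\bg_k\|^2$ in the $V_3$ bound; with that correction your plan goes through along the same lines as the paper.
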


\begin{proof}\textbf{(i)} This part shows the upper bound of $V_{1,k+1}$.
    \begin{align*}
\E_{\xi_k}[\frac{1}{2}\Vert&\x_{k+1}\Vert_\bk^2]=\E_{\xi_k}[\frac{1}{2}\Vert\x_{k}-\eta_k(\gamma_k \bl\hx_{k}+\omega_k  \vv_{k}+\g^s_k)\Vert_\bk^2]\\
& =\frac{1}{2}\left\|\mathbf{x}_k\right\|_{\mathbf{K}}^2-\eta_k \gamma_k \mathbf{x}_k^{\top} \mathbf{L} \hat{\mathbf{x}}_k+\left\|\hat{\mathbf{x}}_k\right\|_{\frac{\eta_k^2 \gamma_k^2}{2} \mathbf{L}^2}^2 \\
&~~~-\eta_k \omega_k\left(\mathbf{x}_k^{\top}-\eta_k \gamma_k \hat{\mathbf{x}}_k^{\top} \mathbf{L}\right) \mathbf{K}\left(\mathbf{v}_k+\frac{1}{\omega_k} \mathbf{g}_k\right)\\
&~~~+\E_{\xi_k}\left\|\mathbf{v}_k+\frac{1}{\omega_k} \mathbf{g}_k^s\right\|_{\frac{\eta_k^2 \omega_k^2}{2} \mathbf{K}}^2 \\
& =\frac{1}{2}\left\|\mathbf{x}_k\right\|_{\mathbf{K}}^2-\eta_k \gamma_k \mathbf{x}_k^{\top} \mathbf{L}\left(\mathbf{x}_k+\hat{\mathbf{x}}_k-\mathbf{x}_k\right)+\left\|\hat{\mathbf{x}}_k\right\|_{\frac{\eta_k^2 \gamma_k^2}{2}\mathbf{L}^2}^2 \\
&~~~-\eta_k \omega_k\left(\mathbf{x}_k^{\top}-\eta_k \gamma_k \hat{\mathbf{x}}_k^{\top} \mathbf{L}\right) \mathbf{K}\bigg(\mathbf{v}_k+\frac{1}{\omega_k} \g^b_k\\
&~~~+\frac{1}{\omega_k} \mathbf{g}_k-\frac{1}{\omega_k} \g^b_k\bigg) \\
& +\E_{\xi_k}\left\|\mathbf{v}_k+\frac{1}{\omega_k} \g^b_k+\frac{1}{\omega_k} \mathbf{g}_k^s-\frac{1}{\omega_k} \g^b_k\right\|_{\frac{\eta_k^2 \omega_k^2}{2} \mathbf{K}}^2 \\
& \leq \frac{1}{2}\left\|\mathbf{x}_k\right\|_{\mathbf{K}}^2-\left\|\mathbf{x}_k\right\|_{\eta_k\gamma_k\mathbf{L}}^2+\left\|\mathbf{x}_k\right\|_{\frac{\eta_k\gamma_k}{2} \mathbf{L}}^2\\
&~~~+\left\|\hat{\mathbf{x}}_k-\mathbf{x}_k\right\|_{\frac{\eta_k\gamma_k}{2} \mathbf{L}}^2 +\left\|\hat{\mathbf{x}}_k\right\|_{\frac{\eta_k^2 \gamma_k^2}{2} \mathbf{L}^2}^2 \\
&~~~-\eta_k \omega_k \mathbf{x}_k^{\top} \mathbf{K}\left(\mathbf{v}_k+\frac{1}{\omega_k} \g^b_k\right)+\frac{\eta_k}{2}\left\|\mathbf{x}_k\right\|_{\mathbf{K}}^2 \\
&~~~+\frac{\eta_k}{2}\left\|\mathbf{g}_k-\g^b_k\right\|^2+\left\|\hat{\mathbf{x}}_k\right\|_{\frac{\eta_k^2 \gamma_k^2}{2} \mathbf{L}^2}^2\\
&~~~+\frac{\eta_k^2 \omega_k^2}{2}\left\|\mathbf{v}_k+\frac{1}{\omega_k} \g^b_k\right\|^2 +\left\|\hat{\mathbf{x}}_k\right\|_{\frac{\eta_k^2 \gamma_k^2}{2} \mathbf{L}^2}^2\\
& ~~~+\frac{\eta_k^2}{2}\left\|\mathbf{g}_k-\g^b_k\right\|^2  +\eta_k^2 \omega_k^2\left\|\mathbf{v}_k+\frac{1}{\omega_k} \g^b_k\right\|^2\\
&~~~+\eta_k^2\E_{\xi_k}\left\|\mathbf{g}_k^s-\g^b_k\right\|^2 \\
& =\frac{1}{2}\left\|\mathbf{x}_k\right\|_{\mathbf{K}}^2-\left\|\mathbf{x}_k\right\|_{\frac{\eta_k \gamma_k}{2} \mathbf{L}-\frac{\eta_k}{2} \mathbf{K}}^2+\left\|\hat{\mathbf{x}}_k\right\|_{\frac{3 \eta_k^2 \gamma_k^2}{2} \mathbf{L}^2}^2 \\
& +\frac{\eta_k}{2}(1+ \eta_k)\left\|\mathbf{g}_k-\g^b_k\right\|^2+\left\|\hat{\mathbf{x}}_k-\mathbf{x}_k\right\|_{\frac{\eta_k\gamma_k}{2}\bl}^2 \\
& -\eta_k \omega_k\left(\hat{\mathbf{x}}_k+\mathbf{x}_k-\hat{\mathbf{x}}_k\right)^{\top} \mathbf{K}\left(\mathbf{v}_k+\frac{1}{\omega_k} \g^b_k\right)\\
&~~~+\frac{3 \eta_k^2 \omega_k^2}{2}\left\|\mathbf{v}_k+\frac{1}{\omega_k} \g^b_k\right\|^2+\eta_k^2\E_{\xi_k}\left\|\mathbf{g}_k^s-\g^b_k\right\|^2 \\
& \leq \frac{1}{2}\left\|\mathbf{x}_k\right\|_{\mathbf{K}}^2-\left\|\mathbf{x}_k\right\|_{\frac{\eta_k \gamma_k}{2} \mathbf{L}-\frac{\eta_k}{2} \mathbf{K}}^2+\left\|\hat{\mathbf{x}}_k\right\|_{\frac{3\eta_k^2\gamma_k^2}{2} \mathbf{L}^2}^2 \\
&~~~+\frac{\eta_k}{2}(1+\eta_k)\left\|\mathbf{g}_k-\g^b_k\right\|^2\\
&~~~+\left\|\hx_k-\x_k\right\|_{\frac{\eta_k}{2}(\gamma_k \mathbf{L}+2 \omega_k \bar{\lambda}_L \mathbf{K})}^2 \\
&~~~-\eta_k \omega_k \hat{\mathbf{x}}_k^{\top} \mathbf{K}\left(\mathbf{v}_k+\frac{1}{\omega_k} \g^b_k\right)+\eta_k^2\E_{\xi_k}\left\|\mathbf{g}_k^s-\g^b_k\right\|^2\\
&~~~+\frac{6 \eta_k^2 \omega_k^2+\eta_k \omega_k \bar{\lambda}_L^{-1}}{4}\left\|\mathbf{v}_k+\frac{1}{\omega_k} \g^b_k\right\|^2,\addtag\label{eq:upperboundofx1}
\end{align*}
where the first and second equalities comes from~\eqref{eq:propertyofk},~\eqref{eq:iterationxp3}, \eqref{eq:propertyofsg1} and Assumption~\ref{as:boundedvar}; the first inequality come from Cauchy--Schwarz inequality,~\eqref{eq:propertyofk}, and $\lb_\bk=1$; the second inequality comes from Cauchy--Schwarz inequality and $\lb_\bk=1$. From~\eqref{eq:propertyofk1},~\eqref{eq:propertyofp1},~\eqref{eq:propertyofbg1},~\eqref{eq:propertyofsg3} and~\eqref{eq:upperboundofx1}, we know that~\eqref{eq:lyapunovofv1} holds.

\textbf{(ii)} This part shows the upper bound of $V_{2,k+1}$. From the sequence $\{\omega_k\}$ is non-decreasing and~Cauchy–Schwarz inequality, one obtains that
\begin{align*}
    V_{2,k+1}&=\frac{1}{2}\Vert\vv_{k+1}+\frac{1}{\omega_{k+1}}\g_{k+1}^b\Vert^2_{\bp+\beta_1\bp}\\
    &=\frac{1}{2}\Vert\vv_{k+1}+\frac{1}{\omega_{k}}\g_{k+1}^b+(\frac{1}{\omega_{k+1}}-\frac{1}{\omega_{k}})\g_{k+1}^b\Vert^2_{\bp+\beta_1\bp}\\
    &\leq\frac{1}{2}(1+b_k)\Vert\vv_{k+1}+\frac{1}{\omega_{k}}\g_{k+1}^b\Vert^2_{\bp+\beta_1\bp}\\
    &~~~+\frac{1}{2}(b_k+b_k^2)\Vert\g_{k+1}^b\Vert^2_{\bp+\beta_1\bp}.\addtag\label{eq:propertyofv21}
\end{align*}
With respect to $\Vert\vv_{k+1}+\frac{1}{\omega_{k}}\g_{k+1}^b\Vert^2_{\bp+\beta_1\bp}$, we have
\begin{align*}
	\E_{\xi_k}&\bigg[\frac{1}{2}\Vert\vv_{k+1}+\frac{1}{\omega_k}\g_{k+1}^b\Vert^2_{\bp+\beta_1\bp}\bigg]\\
	&=\E_{\xi_k}\bigg[\frac{1}{2}\Vert\vv_k\!+\!\frac{1}{\omega_k}\g_k^b+\!\eta_k\omega_k\bl\hx_k\!+\!\frac{1}{\omega_k}(\g_{k+1}^b-\g_k^b)\Vert^2_{\bp+\beta_1\bp}\bigg]\\
	&= \frac{1}{2}\Vert\vv_k+\frac{1}{\omega_k}\g_k^b\Vert^2_{\bp+\beta_1\bp}\\
&~~~+\eta_k\omega_k(1+\beta_1)\hx_k^\top\bk\left(\vv_k+\frac{1}{\omega_k}\g_k^b\right)\\
  &~~~+\Vert\hx_k\Vert_{\frac{\eta_k^2\omega_k^2}{2}(1+\beta_1)\bl}^2+\E_{\xi_k}\bigg[\frac{1}{2\omega_k^2}\Vert \g_{k+1}^b-\g_k^b\Vert^2_{\bp+\beta_1\bp}\bigg]\\
	&~~~+\E_{\xi_k}\bigg[\frac{1}{\omega_k}(\g_{k+1}^b-\g_k^b)^\top(\bp+\beta_1\bp)\left(\vv_k+\frac{1}{\omega_k}\g_k^b\right)\bigg]\\
	&~~~+\E_{\xi_k}\bigg[\eta_k\hx_k^\top(\bk+\beta_1\bk)(\g_{k+1}^b-\g_k^b)\bigg]\\
	&\leq V_{2,k}+\eta_k\omega_k(1+\beta_1)\hx_k^\top\bk\left(\vv_k+\frac{1}{\omega_k}\g_k^b\right)\\
 &~~~+\Vert\hx_k\Vert_{\frac{\eta_k^2\omega_k^2}{2}(1+\beta_1)\bl}^2+\frac{1}{2\omega_k^2}\E_{\xi_k}\Vert \g_{k+1}^b-\g_k^b\Vert^2_{\bp+\beta_1\bp}\\
	&~~~+\Vert\vv_k+\frac{1}{\omega_k}\g_k^b\Vert^2_{\frac{\eta_k}{4}\bp}+\E_{\xi_k}\Vert\g_{k+1}^b-\g_k^b\Vert^2_{\frac{(1+\beta_1)^2}{\eta_k\omega_k^2}\bp}\\
	&~~~+\Vert\hx_k\Vert_{\frac{\eta_k^2}{2}\bk}^2\!+\!\frac{1}{2}\E_{\xi_k}\Vert\g_{k+1}^b-\g_k^b\Vert^2\\  &~~~+\E_{\xi_k}\left[\eta_k\beta_1\hx_k^\top\bk(\g_{k+1}^b-\g_k^b)\right]\\	&=V_{2,k}+\eta_k\omega_k(1+\beta_1)\hx_k^\top\bk\left(\vv_k+\frac{1}{\omega_k}\g_k^b\right)\\	&~~~+\Vert\hx_k\Vert^2_{\frac{\eta_k^2\omega_k^2}{2}(1+\beta_1)\bl+\frac{\eta_k^2}{2}\bk}+\Vert\vv_k+\frac{1}{\omega_k}\g_k^b\Vert^2_{\frac{\eta_k}{4}\bp}\\
	&~~~+\E_{\xi_k}\Vert\g_{k+1}^b-\g_k^b\Vert^2_{(\frac{(1+\beta_1)^2}{\eta_k\omega_k^2}+\frac{1+\beta_1}{2\omega_k^2})\bp}\\
    &~~~+\frac{1}{2}\E_{\xi_k}\Vert\g_{k+1}^b-\g_k^b\Vert^2\\	&~~~+\E_{\xi_k}\left[\eta_k\beta_1\hx_k^\top\bk(\g_{k+1}^b-\g_k^b)\right]\\
 &\leq V_{2,k}+\eta_k\omega_k(1+\beta_1)\hx_k^\top\bk\left(\vv_k+\frac{1}{\omega_k}\g_k^b\right)\\	&~~~+\Vert\hx_k\Vert^2_{\frac{\eta_k^2\omega_k^2}{2}(1+\beta_1)\bl+\frac{\eta_k^2}{2}\bk}+\Vert\vv_k+\frac{1}{\omega_k}\g_k^b\Vert^2_{\frac{\eta_k}{4}\bp}\\
	&~~~+\left(\left(\frac{(1+\beta_1)^2}{\eta_k\omega_k^2}+\frac{1+\beta_1}{2\omega_k^2}\right)\frac{1}{\underline{\lambda}_L}+\frac{1}{2}\right)\eta_k^2L_f^2\E_{\xi_k}[\Vert\bg_k^s\Vert^2]\\	&~~~+\E_{\xi_k}\left[\eta_k\beta_1\hx_k^\top\bk(\g_{k+1}^b-\g_k^b)\right],\addtag\label{eq:propertyofv22}
\end{align*}
where $\tilde{b}_1=(\frac{1}{\omega_k^2}+\frac{1}{2\eta_k\omega_k})(1+\frac{\gamma_k}{\omega_k})\frac{1}{\underline{\lambda}_L}+\frac{1}{2}$; the first equality comes from~\eqref{eq:iterationv3}; the second equality comes from~\eqref{eq:propertyofk} and~\eqref{eq:propertyofp}; the first inequality comes from~Cauchy–Schwarz inequality; the last inequality comes from~\eqref{eq:propertyofp1} and~\eqref{eq:propertyofbg3}. Combining~\eqref{eq:propertyofp1},~\eqref{eq:propertyofv21} and~\eqref{eq:propertyofv22}, we know that~\eqref{eq:lyapunovofv2} holds.

\textbf{(iii)} This part shows the upper bound of $V_{3,k+1}$. Similar to~\eqref{eq:propertyofv21}, one obtains that
\begin{align*}
V_{3,k+1}&=\x_{k+1}^\top\bk\bp(\vv_{k+1}+\frac{1}{\omega_{k+1}}\g_{k+1}^b)\\
&=\x_{k+1}^\top\bk\bp(\vv_{k+1}+\frac{1}{\omega_{k}}\g_{k+1}^b+(\frac{1}{\omega_{k+1}}-\frac{1}{\omega_{k}})\g_{k+1}^b)\\
&\leq \x_{k+1}^\top\bk\bp(\vv_{k+1}+\frac{1}{\omega_{k}}\g_{k+1}^b)\\
&~~~+\frac{b_k}{2}(\|\x_{k+1}\|^2_\bk+\|\g_{k+1}^b\|^2).\addtag\label{eq:propertyofv31}
\end{align*}
Regarding the first term of~\eqref{eq:propertyofv31}, it  holds that
\begin{align*}	
\E_{\xi_k}&[\x_{k+1}^\top\bk\bp(\vv_{k+1}+\frac{1}{\omega_k}\g_{k+1}^b)]\\
	&=\E_{\xi_k}[(\x_k-\eta_k(\gamma_k\bl\hx_k+\omega_k\vv_k+\g_k^b+\g_k^s-\g_k^b))^\top\\
	&~~~\bk\bp(\vv_k +\frac{1}{\omega_k}\g_k^b+\eta_k\omega_k\bl\hx_k+\frac{1}{\omega_k}(\g_{k+1}^b-\g_k^b))]\\
	&=(\x_k^\top\bk\bp-\eta_k(\gamma_k+\eta_k\omega_k^2)\hx_k^\top\bk)(\vv_k+\frac{1}{\omega_k}\g_k^b)\\
	&~~~+\eta_k\omega_k\x_k^\top\bk\hx_k-\Vert\hx_k\Vert^2_{\eta_k^2\gamma_k\omega_k\bl}\\
 &~~~+\frac{1}{\omega_k}\E_{\xi_k}\left[(\x_k^\top\bk\bp-\eta_k\gamma_k\hx_k^\top\bk)(\g_{k+1}^b-\g_k^b)\right]\\
 &~~~-\eta_k(\omega_k\vv_k\!+\!\g_k^b+\!\g_k\!-\!\g_k^b\!-\bg_k)^\top\bp(\vv_k+\frac{1}{\omega_k}\g_k^b)\\
 &~~~-\E_{\xi_k}\left[\eta_k(\vv_k+\frac{1}{\omega_k}\g_k^b)^\top\bp\bk(\g_{k+1}^b-\g_k^b)\right]\\
	&~~~-\E_{\xi_k}[\eta_k(\g_k^s-\g_k^b)^\top(\eta_k\omega_k\bk\hx_k\\
 &~~~+\frac{1}{\omega_k}\bk\bp(\g_{k+1}^b-\g_k^b))]\\
	&\leq(\x_k^\top\bk\bp-\eta_k\gamma_k\hx_k^\top\bk)(\vv_k+\frac{1}{\omega_k}\g_k^b)+\Vert\hx_k\Vert_{\frac{\eta_k^2\omega_k^2}{2}\bk}^2\\
	&~~~+\Vert\vv_k+\frac{1}{\omega_k}\g_k^b\Vert_{\frac{\eta_k^2\omega_k^2}{2}}^2+\Vert\x_k\Vert_{\frac{\eta_k\omega_k}{4}\bk}^2\\
 &~~~+\Vert\hx_k\Vert_{\eta_k\omega_k(\bk-\eta_k\gamma_k\bl)}^2+\Vert\x_k\Vert_{\frac{\eta_k}{2}\bk}^2\\
	&~~~+\E_{\xi_k}\left[\Vert\g_{k+1}^b-\g_k^b\Vert^2_{\frac{1}{2\eta_k\omega_k^2}\bp^2}\right]\\
	&~~~-\E_{\xi_k}\left[\frac{\eta_k\gamma_k}{\omega_k}\hx_k^\top\bk(\g_{k+1}^b-\g_k^b)\right]-\Vert\vv_k+\frac{1}{\omega_k}\g_k^b\Vert_{\eta_k\omega_k\bp}^2\\
	&~~~+\frac{\eta_k}{4}\Vert\g_k-\g_k^b\Vert^2+\frac{\eta_k}{8}\Vert\bg_k\Vert^2\\
	&~~~+\Vert\vv_k+\frac{1}{\omega_k}\g_k^b\Vert_{3\eta_k\bp^2}^2+\Vert\vv_k+\frac{1}{\omega_k}\g_k^b\Vert_{\eta_k^2\bp^2}^2\\
	&~~~+\frac{1}{4}\E_{\xi_k}\Vert\g_{k+1}^b-\g_k^b\Vert^2+\frac{\eta_k^2}{2}\Vert\g_{k}-\g_k^b\Vert^2+\Vert\hx_k\Vert_{\frac{\eta_k^2\omega_k^2}{2}\bk}^2\\
	&~~~+\E_{\xi_k}[\frac{\eta_k^2}{2}\Vert\g_{k}^s-\g_k^b\Vert^2]+\E_{\xi_k}[\Vert\g_{k+1}^b-\g_k^b\Vert_{\frac{1}{2\omega_k^2}\bp^2}^2]\\
 &=(\x_k^\top\bk\bp-\eta_k\gamma_k\hx_k^\top\bk)(\vv_k+\frac{1}{\omega_k}\g_k^b)\\	&~~~+\Vert\x_k\Vert_{\frac{\eta_k(\omega_k+2)}{4}\bk}^2+\Vert\hx\Vert^2_{\eta_k\omega_k\bk+\eta_k^2(\omega_k^2\bk-\omega_k\gamma_k\bl)}\\
	&~~~+(\frac{\eta_k}{4}+\frac{\eta_k^2}{2})\Vert\g_{k}-\g_k^b\Vert^2\\
 &~~~-\E_{\xi_k}\left[\frac{\eta_k\gamma_k}{\omega_k}\hx_k^\top\bk(\g_{k+1}^b-\g_k^b)\right]+\frac{\eta_k}{8}\Vert\bg_k\Vert^2\\
 &~~~-\Vert\vv_k+\frac{1}{\omega_k}\g_k^b\Vert_{\eta_k\omega_k\bp-3\eta_k\bp^2-\eta_k^2\bp^2-\frac{\eta_k^2\omega_k^2}{2}\bi_{nd}}^2\\
 &~~~+\E_{\xi_k}[\frac{\eta_k^2}{2}\Vert\g_{k}^s-\g_k^b\Vert^2]\\
	&~~~+\E_{\xi_k}[\Vert\g_{k+1}^b-\g_k^b\Vert^2_{\frac{1+\eta_k}{2\eta_k\omega_k^2}\bp^2+\frac{1}{4}\bi_{nd}}]\\
	&\leq V_{3,k}-(1+b_k)\eta_k\gamma_k\hx_k^\top\bk(\vv_k+\frac{1}{\omega_k}\g_k^b)\\ &~~~+\Vert\hx\Vert^2_{\eta_k\omega_k\bk+\eta_k^2(\omega_k^2\bk-\omega_k\gamma_k\bl)}\\	&~~~+\Vert\x_k\Vert_{\frac{\eta_k(\omega_k+2)}{4}\bk+(\frac{\eta_k}{4}+\frac{\eta_k^2}{2})L_f^2\bk}^2\\
  &~~~-\E_{\xi_k}\left[(1+b_k)\frac{\eta_k\gamma_k}{\omega_k}\hx_k^\top\bk(\g_{k+1}^b-\g_k^b)\right]+\frac{\eta_k}{8}\Vert\bg_k\Vert^2\\
 &~~~-\Vert\vv_k+\frac{1}{\omega_k}\g_k^b\Vert_{\eta_k(\omega_k-3\underline{\lambda}_L^{-1})\bp-\eta_k^2(\underline{\lambda}_L^{-1}-\frac{\omega_k^2}{2}\bar{\lambda}_L)\bp}^2\\
 &~~~+\E_{\xi_k}[\frac{\eta_k^2}{2}\Vert\g_{k}^s-\g_k^b\Vert^2]\\
	&~~~+\E_{\xi_k}[\Vert\g_{k+1}^b-\g_k^b\Vert^2_{\frac{1+\eta_k}{2\eta_k\omega_k^2\underline{\lambda}_\bl^2}\bi_{nd}+\frac{1}{4}\bi_{nd}}]\\
 &~~~+b_k\eta_k\gamma_k\hx_k^\top\bk(\vv_k+\frac{1}{\omega_k}\g_k^b)\\
	&~~~+\E_{\xi_k}\left[b_k\frac{\eta_k\gamma_k}{\omega_k}\hx_k^\top\bk(\g_{k+1}^b-\g_k^b)\right]\\
 &\leq V_{3,k}-(1+b_k)\eta_k\gamma_k\hx_k^\top\bk(\vv_k+\frac{1}{\omega_k}\g_k^b)\\ &~~~+\Vert\hx\Vert^2_{\eta_k(\omega_k\bk+\frac{b_k\gamma_k}{8}\bl)+\eta_k^2(\omega_k^2\bk+\frac{b_k}{2}\bk-\omega_k\gamma_k\bl)}\\
 &~~~+\Vert\x_k\Vert_{(\frac{\eta_k(\omega_k+2)}{4})\bk+(\frac{\eta_k}{4}+\frac{3\eta_k^2}{2})L_f^2\bk}^2\\
  &~~~-\E_{\xi_k}\left[(1+b_k)\frac{\eta_k\gamma_k}{\omega_k}\hx_k^\top\bk(\g_{k+1}^b-\g_k^b)\right]+\frac{\eta_k}{8}\Vert\bg_k\Vert^2\\
 &~~~-\Vert\vv_k+\frac{1}{\omega_k}\g_k^b\Vert_{\eta_k(\omega_k-3\underline{\lambda}_L^{-1})\bp-\eta_k^2(\underline{\lambda}_L^{-1}+\frac{\omega_k^2}{2}\bar{\lambda}_L)\bp-2b_k\eta_k\gamma_k\bp}^2\\
 &~~~+(\frac{1+\eta_k}{2\eta_k\omega_k^2\underline{\lambda}_L^2}+\frac{b_k\gamma_k^2}{2\omega_k^2}+\frac{1}{4})\eta_k^2L_f^2\E_{\xi_k}[\Vert\bg_k^s\Vert^2]+n\eta_k^2\sigma^2,\addtag\label{eq:propertyofv32}
\end{align*}
where the first equality comes from~\eqref{eq:iterationxp3} and~\eqref{eq:iterationv3}; the second equality holds due to~\eqref{eq:propertyofk},~\eqref{eq:propertyofp},~\eqref{eq:propertyofv},~\eqref{eq:propertyofsg1}, and the fact that $\bk=\bi-\bh$; the first inequality comes from~Cauchy–Schwarz inequality; the second inequality holds due to~\eqref{eq:propertyofp1} and~\eqref{eq:propertyofbg1}; the last inequality holds due to~Cauchy–Schwarz inequality,~\eqref{eq:propertyofbg3}, and~\eqref{eq:propertyofsg3}. Combining~\eqref{eq:propertyofv31} and~\eqref{eq:propertyofv32}, we know that~\eqref{eq:lyapunovofv3} holds. 

\textbf{(iv)} This part shows the upper bound of $V_{4,k+1}$.
\begin{align*}
	\E_{\xi_k}&[V_{4,k+1}]=n(f(\bar{x}_{k+1})-f^*)\\
 &=\tilde{f}(\bx_k)-nf^*+\tilde{f}(\bx_{k+1})-\tilde{f}(\bx_k)\\
	&\leq \tilde{f}(\bx_k)-nf^*-\E_{\xi_k}\left[\eta_k(\bg_k^s)^\top\bg_k^b\right]+\frac{\eta_k^2L_f}{2}\E_{\xi_k}\Vert\bg_k^s\Vert^2\\
	&=\tilde{f}(\bx_k)-nf^*-\frac{\eta_k}{2}\bg_k^\top(\bg_k^b+\bg_k-\bg_k)\\
	&~~~-\frac{\eta_k}{2}(\bg_k-\bg_k^b+\bg_k^b)^\top(\bg_k^b)+\frac{\eta_k^2L_f}{2}\E_{\xi_k}\Vert\bg_k^s\Vert^2\\
 &\leq\tilde{f}(\bx_k)-nf^*-\frac{\eta_k}{4}\|\bg_k\|^2+\frac{\eta_k}{2}\|\bg_k^b-\bg_k\|^2\\
	&~~~-\frac{\eta_k}{4}\|\bg_k^b\|^2+\frac{\eta_k^2L_f}{2}\E_{\xi_k}\Vert\bg_k^s\Vert^2\\
 &\leq\tilde{f}(\bx_k)-nf^*-\frac{\eta_k}{4}\|\bg_k\|^2+\frac{\eta_kL_f^2}{2}\|\x_k\|_\bk^2\\
	&~~~-\frac{\eta_k}{4}\|\bg_k^b\|^2+\frac{\eta_k^2L_f}{2}\E_{\xi_k}\Vert\bg_k^s\Vert^2,\addtag\label{eq:upperboundofax1}
\end{align*}
where the first inequality comes from~\eqref{eqn:smooth1},~\eqref{eq:propertyofbx}, and the fact that $\bh=\bh\bh$; the third equality holds due to~\eqref{eq:propertyofsg4}; the second inequality holds due to Cauchy--Schwarz inequality. From~\eqref{eq:propertyofbg2} and~\eqref{eq:upperboundofax1}, we know that~\eqref{eq:lyapunovofv4} holds.

\textbf{(v)} This part shows the upper bound of $V_{5,k+1}$
\begin{align*}
    \E&[V_{5,k+1}]=\E\|\x_{k+1}-\x_{k+1}^c\|^2\\
&=\E\Vert\x_{k+1}-\x_k+\x_k-\x_{k}^c-\alpha_xr\frac{h_k\mathcal{C}((\x_k-\x_{k}^c)/h_k)}{r}\Vert^2\\
&\leq (1+s)\E\Vert(1-\alpha_xr)(\x_k-\x_{k}^c)+\alpha_xrh_k((\x_k-\x_{k}^c)/h_k\\
&~~~-\frac{\mathcal{C}((\x_k-\x_{k}^c)/h_k)}{r})\Vert^2+(1+\frac{1}{s})\E\Vert\x_{k+1}-\x_k\Vert^2\\
&\leq (1+s)(\alpha_xr(1-\varphi)+(1-\alpha_xr))\E\Vert\x_k-\x_{k}^c\Vert^2\\
&~~~+(1+\frac{1}{s})\E\Vert\x_{k+1}-\x_k\Vert^2+h_k^2\sigma_{\mathcal{C}}\\
&= (1-\frac{\varphi_1}{2}-\frac{\varphi_1^2}{2})\E\Vert\x_k-\x_{k}^c\Vert^2\\
&~~~+(1+\frac{2}{\varphi_1})\E\Vert\x_{k+1}-\x_k\Vert^2+h_k^2\sigma_{\mathcal{C}},\addtag\label{eq:upperboundofc1}
\end{align*}
where the second equality comes from~\eqref{citerationx1} and~\eqref{citerationxc1}; the first inequality comes from AM--GM inequality and $s>0$; the second inequality comes from the convexity of the norm and~\eqref{eq:propertyofcompressors}; the last equality follows by denoting $\varphi_1=\alpha_xr\varphi$, choosing $s=\frac{\varphi_1}{2}$, and $\alpha_xr<1$. Regarding the term $\Vert\x_{k+1}-\x_k\Vert^2$, we have
\begin{align*}
	\E\Vert\x_{k+1}&-\x_k\Vert^2\\
 &=\E\|\eta\left(\gamma\bl\hx_k+\omega\vv_k+\g_k^s\right)\|^2\\
	&=\eta^2\E\Vert(\gamma\bl(\hx_k-\x_k)+\gamma\bl\x_k+\omega\vv_k+\g_k^b\\
	&~~~+\g_k^s-\g_k^b)\Vert^2\\
	&\leq 4\eta^2(\E\Vert\gamma\bl(\hx_k-\x_k)\Vert^2+\E\left\Vert\omega\vv_k+\g_k^b\right\Vert^2\\
	&~~~+\E\Vert\gamma\bl\x_k\Vert^2+\E\Vert\g_k^s-\g_k^b\Vert^2)\\
	&\leq 4\eta^2(\gamma^2\bar{\lambda}_L^2r_0\E\Vert\x^c_k-\x_k\Vert^2+\E\left\Vert\vv_k\!+\!\frac{1}{\omega}\g_k^b\right\Vert^2_{\omega^2\underline{\lambda}_L\bp}\\
&~~~+\E\Vert\x_k\Vert^2_{(\gamma^2\bar{\lambda}_L^2+2L_f^2)\bk}+2n\sigma^2)\\
&~~~+8\eta^2\gamma^2\bar{\lambda}_L^2r^2h_k^2\sigma_{\mathcal{C}},\addtag\label{eq:upperboundofc2}
\end{align*}
where the first equality holds due to~\eqref{eq:iterationxp3}; the first inequality holds due to Cauchy--Schwarz inequality; the last inequality holds due to~\eqref{eq:propertyofcompressors1},~\eqref{eq:propertyofk1},~\eqref{eq:propertyofp1}, and~\eqref{eq:propertyofsg3}. Combining~\eqref{eq:upperboundofc1} and \eqref{eq:upperboundofc2}, one obtains that
\begin{align*}
	\E[V_{5,k+1}]
	&\leq (1-\frac{\varphi_1}{2}-\frac{\varphi_1^2}{2}\\
	&~~~+4\eta^2\gamma^2\bar{\lambda}_L^2r_0(1+\frac{2}{\varphi_1}))\E\Vert\x_k-\x_{k}^c\Vert^2\\
 &~~~+\E\Vert\x_k\Vert^2_{4\eta^2(1+\frac{2}{\varphi_1})(\gamma^2\bar{\lambda}_L^2+2L_f^2)\bk}\\
	&~~~+\E\left\Vert\vv_k+\frac{1}{\omega}\g_k^b\right\Vert^2_{4\eta^2(1+\frac{2}{\varphi_1})\omega^2\underline{\lambda}_L\bp}\\&~~~+(1+\frac{2}{\varphi_1})8n\eta^2\sigma^2\\
&~~~+(8\eta^2\gamma^2\bar{\lambda}_L^2r^2+1)h_k^2\sigma_{\mathcal{C}},\addtag\label{eq:upperboundofc3}
\end{align*}
\end{proof}

\section{The proof of Theorem~\ref{theo:convergence1}}\label{app-convergence1}
For simplicity of the proof, we also denote some notations.
\begin{align*}
&\varphi_1=\alpha_xr\varphi\\
&\epsilon_1=\frac{\gamma}{2}\underline{\lambda}_L-(\frac{\omega+4}{4}+\frac{5}{4}L_f^2)\\
&\epsilon_2=(7+\frac{16}{\varphi_1})L_f^2+(4+\frac{8}{\varphi_1})\gamma^2\bar{\lambda}_L^2\\
&\epsilon_3=\frac{1}{2}+\omega^2+\frac{3\gamma^2\bar{\lambda}_L^2}{2}\\
&\epsilon_4=\frac{3\omega-1}{4}-3\underline{\lambda}_L^{-1}\\
&\epsilon_5=2\omega^2\bar{\lambda}_L+\underline{\lambda}_L^{-1}+(4+\frac{8}{\varphi_1})\omega^2\underline{\lambda}_L\\
&\epsilon_6=\frac{1}{8}-(\frac{2(1+\beta_1)^2}{\omega^2\underline{\lambda}_L}+\frac{1}{\omega^2\underline{\lambda}_L^2})L_f^2\\
&\epsilon_7=(\frac{1+\beta_1}{\omega^2\underline{\lambda}_L}+\frac{1}{\omega^2\underline{\lambda}_L^2}+\frac{3}{2})L_f^2+L_f\\
&\epsilon_8=(\frac{2(1+\beta_1)^2}{\eta\omega^2\underline{\lambda}_L}+\frac{1+\beta_1}{\omega^2\underline{\lambda}_L}+\frac{1+\eta}{\eta\omega^2\underline{\lambda}_L^2}+\frac{3}{2})L_f^2+L_f\\
&\epsilon_9=11+\frac{16}{\varphi_1}\\
&\epsilon_{10}=\frac{\varphi_1}{2}+\frac{\varphi_1^2}{2}\\
&\epsilon_{11}=\frac{1}{2}(\gamma+2 \omega)\bar{\lambda}_Lr_0+2\omega r_0\\
&\epsilon_{12}=\frac{(8+7\varphi_1)\gamma^2\bar{\lambda}_L^2r_0}{\varphi_1}+(1+2\omega^2)r_0\\
&\epsilon_{13}=(\beta_1\beta_2+6 \beta_2)\bar{\lambda}_Lr^2+(14\beta_1^2\beta_2^2\bar{\lambda}_L^2r^2+2\eta^2+4\beta_1^2\beta_2^2)+1\\
&\tilde{\epsilon}_1=\frac{\gamma}{2}\underline{\lambda}_L-(\frac{9\omega+4}{4}+\frac{5}{4}L_f^2)\\
&\tilde{\epsilon}_2=(7+\frac{16}{\varphi_1})L_f^2+(4+\frac{8}{\varphi_1})\gamma^2\bar{\lambda}_L^2+1+2\omega^2+3\gamma^2\bar{\lambda}_L^2\\
&\beta_3=\max\{\frac{4+5L_f^2}{\beta_5},\frac{12\underline{\lambda}_L^{-1}+1}{3},\sqrt{\beta_6},\frac{\beta_2}{\beta_4},4\beta_2 L_f\}\\
&\beta_4=\min\{\frac{\tilde{\epsilon}_1}{\tilde{\epsilon}_2},\frac{\epsilon_4}{\epsilon_5},\frac{\epsilon_6}{\epsilon_7},\frac{\sqrt{\epsilon_{11}^2+4\epsilon_{10}\epsilon_{12}}-\epsilon_{11}}{2\epsilon_{12}},1\}\\
&\beta_5>0\\
&\beta_6=(\frac{16(1+\beta_1)^2}{\underline{\lambda}_L}+\frac{8}{\underline{\lambda}_L^2})L_f^2\\
&\check{c}_1=\frac{\gamma \underline{\lambda}_L-\omega}{2 \gamma \underline{\lambda}_L}\\
&c_0=\max\{\frac{9+\beta_5}{2\underline{\lambda}_L},1\}\\
&c_1=(\frac{2(1+\beta_1)^2}{\beta_2\beta_3\underline{\lambda}_L}+\frac{1+\beta_1}{\beta_3^2\underline{\lambda}_L}+\frac{1}{\beta_2\beta_3\underline{\lambda}_L^2}\\
&~~~~~~~~+\frac{1}{\beta_3^2\underline{\lambda}_L^2}+\frac{3}{2})L_f^2+L_f\\
&c_2=\eta\tilde{\epsilon}_1-\eta^2\tilde{\epsilon}_2.
\end{align*}
where $\beta_1$ and $\beta_2$ are the parameters used in Theorem~\ref{theo:convergence1}; $\alpha_x$, $\omega$ and $\gamma$ are the parameters of the proposed algorithm (see~\eqref{citerationx}--\eqref{eq:iterationv2}).
\begin{lemma}\label{lemma:fixedinquali}
 Suppose Assumptions~\ref{as:strongconnected}--\ref{as:finite} and~\ref{as:compressor} hold. If $\gamma_k=\gamma=\beta_1\omega$, $\beta_1>1$, $\omega_k=\omega$, and $\alpha_x\in(0,\frac{1}{r})$, it  holds that
    \begin{align*}
        \E[V_{k+1}]&\leq \E[V_k]-\E\|\x_k\|_{(\eta\tilde{\epsilon}_1-\eta^2\tilde{\epsilon}_2)\bk}\\
        &~~~-\E\left\Vert\vv_k+\frac{1}{\omega}\g_k^b\right\Vert^2_{\eta(\epsilon_4-\eta\epsilon_5)\bp}-\eta(\epsilon_6-\eta\epsilon_7)\E\|\bg_k\|^2\\
 &~~~-\frac{\eta}{4}\E\|\bg_k^b\|^2+\epsilon_8\sigma^2\eta^2+\epsilon_9n\sigma^2\eta^2\\
 &~~~-(\epsilon_{10}-\eta\epsilon_{11}-\eta^2\epsilon_{12})\E\Vert\x_k-\x_{k}^c\Vert^2+\epsilon_{13}h_k^2\sigma_{\mathcal{C}},\addtag\label{eq:upperofV14}
    \end{align*}
    where $V_{k}=\sum_{i=1}^5 V_{i,k}$.
\end{lemma}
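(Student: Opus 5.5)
The plan is to specialize Lemma~\ref{lemma:lyapunov} to constant parameters, add the five bounds, and collect terms. With $\omega_k=\omega$ constant we have $b_k=0$. This removes every term in \eqref{eq:lyapunovofv2}--\eqref{eq:lyapunovofv3} involving $b_k$, including $\frac{b_k}{2}(\|\x_{k+1}\|_\bk^2+\|\g_{k+1}^b\|^2)$ and $\frac{1}{2\underline{\lambda}_L}(b_k+b_k^2)(1+\beta_1)\|\g_{k+1}^b\|^2$. Taking total expectation of \eqref{eq:lyapunovofv1}--\eqref{eq:lyapunovofv5} and summing gives an inequality for $\E[V_{k+1}]$ in terms of $\E[V_k]$.

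\textbf{Step 1: cancel the cross terms.} The terms $\hx_k^\top\bk(\vv_k+\frac{1}{\omega}\g_k^b)$ appear with coefficients $-\eta\omega$, $+\eta\omega(1+\beta_1)$ and $-\eta\gamma$. With $\gamma=\beta_1\omega$ these sum to zero. Likewise, the terms $\hx_k^\top\bk(\g^b_{k+1}-\g^b_k)$ appear with coefficients $+\eta\beta_1$ and $-\eta\gamma/\omega$, which also cancel. This is why $\gamma=\beta_1\omega$ is imposed.

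\textbf{Step 2: replace $\hx_k$ by $\x_k$.} The surviving quadratic forms in $\hx_k$ are $\frac{3\eta^2\gamma^2}{2}\bl^2$, $\frac{\eta^2\omega(\omega+\gamma)}{2}\bl+\frac{\eta^2}{2}\bk$, and $\eta\omega\bk+\eta^2(\omega^2\bk-\omega\gamma\bl)$. I drop the negative $-\eta^2\omega\gamma\bl$ part, write $\hx_k=\x_k+(\hx_k-\x_k)$, note that $\hx_k$ enters only through $\bk$ or $\bl$, and use $\|a+b\|^2\le2\|a\|^2+2\|b\|^2$. Then \eqref{eq:propertyofcompressors0} gives $\E\|\hx_k-\x_k\|^2\le r_0\E\|\x_k-\x_k^c\|^2+2r^2\sigma_{\mathcal{C}}h_k^2$. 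The $\x_k$ part adds the $\omega$-proportional term that turns $\epsilon_1$ into $\tilde\epsilon_1$, along with the $\eta^2$ terms $1+2\omega^2+3\gamma^2\bar\lambda_L^2$ in $\tilde\epsilon_2$. The compression part, together with the explicit $\frac{\eta}{2}(\gamma+2\omega)\bar\lambda_L\|\hx_k-\x_k\|^2$ from \eqref{eq:lyapunovofv1}, produces $\eta\epsilon_{11}$ and $\eta^2\epsilon_{12}$. These are combined with the contraction $-\epsilon_{10}$ and the $\eta^2\gamma^2\bar\lambda_L^2r_0$ term from \eqref{eq:lyapunovofv5}. The $h_k^2\sigma_{\mathcal{C}}$ pieces are collected into $\epsilon_{13}$. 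For the consensus part I use $\bl\ge\underline{\lambda}_L\bk$ from \eqref{eq:propertyofk1} to turn $-\frac{\eta\gamma}{2}\|\x_k\|^2_\bl$ into $-\frac{\eta\gamma\underline\lambda_L}{2}\|\x_k\|_\bk^2$, and $\bl\le\bar\lambda_L\bk$, $\bl^2\le\bar\lambda_L^2\bk$ for the positive terms.

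\textbf{Step 3: stochastic gradient and dual terms.} I bound $\E\|\bg_k^s\|^2\le\|\bg_k\|^2+\sigma^2$; averaging reduces the variance, since $\E\|\bh(\g^s_k-\g_k)\|^2\le\sigma^2$ by independence in Assumption~\ref{as:boundedvar}. This sends the coefficients of $\eta^2L_f^2\E\|\bg_k^s\|^2$ from \eqref{eq:lyapunovofv2}--\eqref{eq:lyapunovofv4} partly into $\eta\epsilon_7$ multiplying $\|\bg_k\|^2$ and partly into $\epsilon_8\sigma^2\eta^2$. The $\frac{1}{\eta}$-type pieces, such as $\frac{(1+\beta_1)^2}{\eta\omega^2\underline\lambda_L}\eta^2$, are of order $\eta$. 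They are therefore moved into $\epsilon_6$, reducing $-\frac{\eta}{4}+\frac{\eta}{8}$ to $-\eta\epsilon_6$. The $n\sigma^2\eta^2$ terms sum to $\epsilon_9$. The dual terms $\|\vv_k+\frac1\omega\g_k^b\|_\bp^2$ collect $-\eta(\omega-3\underline\lambda_L^{-1})+\frac{\eta\omega}{4}$ and $\frac{\eta}{4}$ (giving $\epsilon_4$), plus the $\eta^2$ pieces (giving $\epsilon_5$). Here $\bi\le\bar\lambda_L\bp$ from \eqref{eq:propertyofp1} is used to express unweighted norms in $\bp$-norm.

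The main obstacle is bookkeeping rather than any single idea: matching each constant exactly to $\tilde\epsilon_1,\tilde\epsilon_2,\epsilon_4,\dots,\epsilon_{13}$ requires choosing the Young's-inequality splits consistently. I would verify the cancellation in Step 1 first, then match the coefficients term by term. Where a constant does not come out exactly, the claimed value would be treated as an upper bound, since all that matters is that the stated inequality \eqref{eq:upperofV14} holds.
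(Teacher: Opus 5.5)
Your route is the paper's: set $b_k=0$, add the five bounds of Lemma~\ref{lemma:lyapunov}, cancel both families of cross terms via $\gamma=\beta_1\omega$, split $\|\hx_k\|_\bk^2\le 2\|\x_k\|_\bk^2+2\|\hx_k-\x_k\|^2$ and apply \eqref{eq:propertyofcompressors0}, then bound $\E\|\bg_k^s\|^2$. Your bound $\|\bg_k\|^2+\sigma^2$ is tighter than the paper's $2\|\bg_k\|^2+2\sigma^2$ and still implies the stated $\epsilon_6,\epsilon_7,\epsilon_8$.

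The one step that fails as written is ``drop the negative $-\eta^2\omega\gamma\bl$ part.'' If you discard it, the term $\frac{\eta^2\omega(\omega+\gamma)}{2}\bl$ from \eqref{eq:lyapunovofv2} survives and becomes $\frac{\eta^2\omega(\omega+\gamma)\bar\lambda_L}{2}\bk$ via \eqref{eq:propertyofk1}. After the doubling in your split, it adds these extra amounts:
\begin{itemize}
\item[] $\omega(\omega+\gamma)\bar\lambda_L$ to the $\eta^2$ coefficient of $\|\x_k\|_\bk^2$;
\item[] $r_0\omega(\omega+\gamma)\bar\lambda_L$ to $\epsilon_{12}$;
\item[] a matching piece to $\epsilon_{13}$.
\end{itemize}
So your claim that the $\eta^2$ contribution to $\tilde\epsilon_2$ is exactly $1+2\omega^2+3\gamma^2\bar\lambda_L^2$ does not follow from your own step. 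Your fallback of ``treating the claimed value as an upper bound'' also runs the wrong way here: a larger derived coefficient does not give the inequality with the smaller stated one.

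The missing ingredient is the hypothesis $\beta_1>1$. Your proposal never uses it, and it is exactly what the paper invokes at this point. Keep the negative term and combine:
\begin{equation*}
\frac{\eta^2\omega(\omega+\gamma)}{2}\bl-\eta^2\omega\gamma\bl=-\frac{\eta^2\omega^2(\beta_1-1)}{2}\bl\preceq 0 .
\end{equation*}
The $\hx_k$ forms then reduce to $\|\hx_k\|^2_{(\eta\omega+\eta^2\epsilon_3)\bk}$ with $\epsilon_3=\frac12+\omega^2+\frac32\gamma^2\bar\lambda_L^2$. Your split then gives precisely $2\omega$ in $\tilde\epsilon_1$, $2\epsilon_3$ in $\tilde\epsilon_2$, and $2r_0\epsilon_3$ in $\epsilon_{12}$, as the paper has.

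One caution for the term-by-term matching you plan. The $\eta^2L_f^2$ coefficient of $\|\x_k\|_\bk^2$ from \eqref{eq:lyapunovofv1}, \eqref{eq:lyapunovofv3}, and \eqref{eq:lyapunovofv5} sums to $\frac52+\frac32+8+\frac{16}{\varphi_1}=12+\frac{16}{\varphi_1}$. That matches $\breve\epsilon_{2,k}$ in Appendix~\ref{app-convergence3}, not the $7+\frac{16}{\varphi_1}$ printed in $\epsilon_2$. That mismatch is an error in the paper's constant, not in your argument, and no choice of splitting will reproduce the $7$.
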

\begin{proof}
   
We first consider the term $\E_{\xi_k}[\Vert\bg_k^s\Vert^2]$.
\begin{align*}
    \E_{\xi_k}[\Vert\bg_k^s\Vert^2]&= \E_{\xi_k}[\Vert\bg_k^s-\bg_k+\bg_k\Vert^2]\\
    &\leq 2\E_{\xi_k}[\Vert\bg_k^s-\bg_k\Vert^2]+2\|\bg_k\|^2\\
    &=\frac{2}{n}\E_{\xi_k}[\Vert \sum_{i=1}^n g_{i,k}^s-g_{i,k}\Vert^2]+2\|\bg_k\|^2\\
    &=\frac{2}{n}\sum_{i=1}^n \E_{\xi_k}[\Vert g_{i,k}^s-g_{i,k}\Vert^2]+2\|\bg_k\|^2\\
    &\leq 2\sigma^2+2\|\bg_k\|^2,\addtag\label{eq:upperofV11}
\end{align*}
where the first inequality holds due to Cauchy--Schwarz inequality; the last equality holds due to Assumption~\ref{as:boundedvar}; the last inequality holds due to~\eqref{eq:propertyofsg2}. We then consider the term $\|\hx_k\|_\bk^2$.
\begin{align*}
   \|\hx_k\|_\bk^2&=\|\hx_k-\x_k+\x_k\|_\bk^2\\
   &\leq 2\|\hx_k-\x_k\|^2+2\|\x_k\|_\bk^2\\
   &\leq 2r_0\Vert\x_k-\x_{k}^c\Vert^2+2\|\x_k\|_\bk^2+4r^2h_k^2\sigma_{\mathcal{C}},\addtag\label{eq:upperofV12}
\end{align*}
where the last inequality holds due to~\eqref{eq:propertyofcompressors0}.

 From~\eqref{eq:propertyofcompressors0},~\eqref{eq:upperboundofc3},~\eqref{eq:upperofV11}, $b_k=0$, and Lemma~\ref{lemma:lyapunov}, we have
    \begin{align*}
\E&[V_{k+1}]\\
&\leq \E[V_k]-\E\left\|\mathbf{x}_k\right\|_{\frac{\eta \gamma}{2} \mathbf{L}-\frac{\eta}{2} \mathbf{K}-\frac{\eta}{2}(1+5 \eta) L_f^2 \boldsymbol{K}}^2+\E\left\|\hat{\mathbf{x}}_k\right\|_{\frac{3\eta^2 \gamma^2}{2} \mathbf{L}^2}^2 \\
	&~~~+\frac{\eta}{2}(\gamma+2 \omega)\bar{\lambda}_Lr_0\E\left\|\hat{\mathbf{x}}_k-\mathbf{x}^c_k\right\|^2 \\
&~~~ +\frac{6 \eta^2 \omega^2 \bar{\lambda}_L+\eta \omega+\eta}{4}\E\left\|\mathbf{v}_k+\frac{1}{\omega} \g^b_k\right\|^2_\bp+2n\sigma^2\eta^2\\
	&~~~+\E\|\hx_k\|_{ \frac{\eta^2\omega}{2}\left(\omega+\gamma\right) \bl+\frac{\eta^2}{2}\bk}\\
&~~~+\left(\left(\frac{(1+\beta_1)^2}{\eta\omega^2}+\frac{1+\beta_1}{2\omega^2}\right)\frac{1}{\underline{\lambda}_L}+\frac{1}{2}\right)\\
	&~~~\eta^2L_f^2(2\sigma^2+2\E\|\bg_k\|^2)\\
&~~~+\E\Vert\hx\Vert^2_{\eta\omega\bk+\eta^2(\omega^2\bk-\omega\gamma\bl)}\\
	&~~~+\E\Vert\x_k\Vert_{(\frac{\eta(\omega+2)}{4})\bk+(\frac{\eta}{4}+\frac{3\eta^2}{2})L_f^2\bk}^2+\frac{\eta}{8}\E\Vert\bg_k\Vert^2\\
&~~~-\E\left\Vert\vv_k+\frac{1}{\omega}\g_k^b\right\Vert_{\eta(\omega-3\underline{\lambda}_L^{-1})\bp-\eta^2(\underline{\lambda}_L^{-1}+\frac{\omega^2}{2}\bar{\lambda}_L)\bp}^2\\
&~~~+(\frac{1+\eta}{\eta\omega^2\underline{\lambda}_L^2}+\frac{1}{2})\eta^2L_f^2(\sigma^2+\E\|\bg_k\|^2)+n\eta^2\sigma^2\\
&~~~-\frac{\eta}{4}\E\|\bg_k\|^2+\frac{\eta L_f^2}{2}\E\|\x_k\|_\bk^2-\frac{\eta}{4}\E\|\bg_k^b\|^2\\
	&~~~+\eta^2L_f(\sigma^2+\E\|\bg_k\|^2)\\
&~~~+(-\frac{\varphi_1}{2}-\frac{\varphi_1^2}{2}+4\eta^2\gamma^2\bar{\lambda}_L^2r_0(1+\frac{2}{\varphi_1}))\E\Vert\x_k-\x_{k}^c\Vert^2\\
	&~~~+\E\Vert\x_k\Vert^2_{4\eta^2(1+\frac{2}{\varphi_1})(\gamma^2\bar{\lambda}_L^2+2L_f^2)\bk}\\
	&~~~+\E\left\Vert\vv_k+\frac{1}{\omega}\g_k^b\right\Vert^2_{4\eta^2(1+\frac{2}{\varphi_1})\omega^2\underline{\lambda}_L\bp}+(1+\frac{2}{\varphi_1})8n\eta^2\sigma^2\\
    &~~~+(8\eta^2\gamma^2\bar{\lambda}_L^2r^2+1)h_k^2\sigma_{\mathcal{C}}\\
 &\leq \E[V_k]-\E\|\x_k\|_{(\eta\epsilon_1-\eta^2\epsilon_2)\bk}+\E\|\hx_k\|^2_{\eta\omega\bk+\eta^2\epsilon_3\bk}\\
	&~~~-\E\left\Vert\vv_k+\frac{1}{\omega}\g_k^b\right\Vert^2_{\eta(\epsilon_4-\eta\epsilon_5)\bp}-\eta(\epsilon_6-\eta\epsilon_7)\E\|\bg_k\|^2\\
 &~~~-\frac{\eta}{4}\E\|\bg_k^b\|^2+\epsilon_8\sigma^2\eta^2+\epsilon_9n\sigma^2\eta^2\\
 &~~~+(-\frac{\varphi_1}{2}-\frac{\varphi_1^2}{2}+4\eta^2\gamma^2\bar{\lambda}_L^2r_0(1+\frac{2}{\varphi_1})\\
	&~~~+\frac{\eta}{2}(\gamma+2 \omega)\bar{\lambda}_Lr_0)\E\Vert\x_k-\x_{k}^c\Vert^2\\
    &~~~+(\eta(\gamma+2 \omega)\bar{\lambda}_Lr^2+8\eta^2\gamma^2\bar{\lambda}_L^2r^2+1)h_k^2\sigma_{\mathcal{C}},\addtag\label{eq:upperofV13}
    \end{align*}
    where the second inequality due to~\eqref{eq:propertyofcompressors1},~\eqref{citerationx1},~\eqref{eq:propertyofk1} and $\beta_1>1$. Combining~\eqref{eq:upperofV12} and~\eqref{eq:upperofV13}, we complete the proof.
\end{proof}

Then we ready to prove Theorem~\ref{theo:convergence1}

\textbf{(i)} From $\gamma=\beta_1\omega$, $\beta_1>\frac{9+\beta_5}{2\underline{\lambda}_L}$, $\beta_5>0$, and $\omega>\beta_3\geq\frac{4+5L_f^2}{\beta_5}$ we have
\begin{align*}
    \tilde{\epsilon}_1&=\frac{\beta_1\omega}{2}\underline{\lambda}_L-(\frac{9\omega+4}{4}+\frac{5}{4}L_f^2)\\
    &>\frac{\beta_1\omega}{2}\underline{\lambda}_L-\frac{(9+\beta_5)\omega}{4}>0.\addtag\label{eqn:bound1}
\end{align*}
Since $\omega>\beta_3>\frac{12\underline{\lambda}_L^{-1}+1}{3}$, we have $\epsilon_4>0$.
From $\omega>\beta_3\geq\sqrt{\beta_6}$, one obtains that
\begin{align*}
    \epsilon_6=\frac{1}{8}-(\frac{2(1+\beta_1)}{\omega^2\underline{\lambda}_L}+\frac{1}{\omega^2\underline{\lambda}_L^2})L_f^2> 0.\addtag\label{eqn:bound2}
\end{align*}
From $\eta=\frac{\beta_2}{\omega}$ and $\omega> \beta_3\geq\frac{\beta_2}{\beta_4}$, we have $\eta<\beta_4$. Then it  holds that
$\eta\tilde{\epsilon}_1-\eta^2\tilde{\epsilon}_2$, $\eta(\epsilon_4-\eta\epsilon_5)$, $\eta(\epsilon_6-\eta\epsilon_7)$, and $\epsilon_{10}-\eta\epsilon_{11}-\eta^2\epsilon_{12}$ are positive.
From $\eta=\frac{\beta_2}{\omega}$ and $\omega>\beta_3$, we have
\begin{align*}
    \epsilon_8&=(\frac{2(1+\beta_1)^2}{\beta_2\omega\underline{\lambda}_L}+\frac{1+\beta_1}{\omega^2\underline{\lambda}_L}\\
	&~~~+\frac{1}{\beta_2\omega\underline{\lambda}_L^2}+\frac{1}{\omega^2\underline{\lambda}_L^2}+\frac{3}{2})L_f^2+L_f< c_1.\addtag\label{eq:upperofep8}
\end{align*}

\textbf{(ii)} From~\eqref{eq:lyapunovofv4} and~\eqref{eq:upperofV11}, we have
\begin{align*}
    \E[V_{4,k+1}]&\leq \E[V_{4,k}]-\frac{\eta}{4}\E\|\bg_k\|^2+\frac{\eta L_f^2}{2}\E\|\x_k\|_\bk^2-\frac{\eta}{4}\E\|\bg_k^b\|^2\\
	&~~~+\eta^2L_f(\sigma^2+\E\|\bg_k\|^2)\\
    &\leq \E[V_{4,k}]+\frac{\eta L_f^2}{2}\E\|\x_k\|_\bk^2-\frac{\eta}{4}\E\|\bg_k^b\|^2+\eta^2L_f\sigma^2,\addtag\label{eq:upperofv4}
\end{align*}
where the last inequality holds due to $\eta=\frac{\beta_2}{\omega}$ and $\omega>\beta_3\geq 4\beta_2 L_f$.

\textbf{(iii)} We denote the following useful function
\begin{align*}
U_k&=\Vert\x_{k}\Vert_\bk^2+\left\Vert\vv_{k}+\frac{1}{\omega}\g_{k}^b\right\Vert^2_{\bp}+\Vert\x_{k}-\x^c_{k}\Vert^2\\
&~~~+n(f(\bar{x}_{k})-f^*).
\end{align*}
From $\gamma=\beta_1\omega$, we have
\begin{align*}
    V_k&=\frac{1}{2}\Vert\x_{k}\Vert_\bk^2+\frac{1}{2}\left\Vert\vv_{k}+\frac{1}{\omega}\g_{k}^b\right\Vert^2_{\bp+\beta_1\bp}\\
	&~~~+\x_{k}^\top\bk\bp(\vv_{k}+\frac{1}{\omega}\g_{k}^b)\\
    &~~~+n(f(\bar{x}_{k})-f^*)+\Vert\x_{k}-\x^c_{k}\Vert^2\\
    &\geq\frac{1}{2}\Vert\x_{k}\Vert_\bk^2+\frac{1}{2}(1+\beta_1)\left\Vert\vv_{k}+\frac{1}{\omega}\g_{k}^b\right\Vert^2_{\bp}\\
	&~~~-\frac{\omega}{2\gamma\underline{\lambda}_L}\|\x_k\|_\bk^2-\frac{\gamma}{2\omega}\left\Vert\vv_{k}+\frac{1}{\omega}\g_{k}^b\right\Vert^2_{\bp}\\
    &~~~+n(f(\bar{x}_{k})-f^*)+\Vert\x_{k}-\x^c_{k}\Vert^2\\
    &\geq \check{c}_1U_k\geq 0,\addtag\label{eq:lowerofw}
\end{align*}
where the last inequality holds due to $\check{c}_1=\frac{1}{2}-\frac{1}{2\beta_1\underline{\lambda}_L}>\frac{1}{2}-\frac{1}{2c_0\underline{\lambda}_L}>\frac{1}{2}-\frac{1}{9}=\frac{7}{18}$. From $\eta=\frac{\beta_2}{\omega}$,~\eqref{eq:upperofV14}, and~\eqref{eq:upperofep8}, it holds that
\begin{align*}
    \E[V_{k+1}]&\leq \E[V_k]-c_2\E\|\x_k\|_\bk^2-\frac{\beta_2}{4\omega}\E\|\bg_k^b\|^2\\
    &~~~+\frac{(c_1+n\epsilon_9)\beta_2^2\sigma^2}{\omega^2}+\epsilon_{13}h_k^2\sigma_{\mathcal{C}}.\addtag\label{eq:upperofx1}
    \end{align*}
Then summing~\eqref{eq:upperofx1} over $k\in[0,T]$, we have 
\begin{align*}
\E[V_{k+1}]+&\sum_{k=0}^T\E[c_2\|\x_k\|_\bk^2+\frac{\beta_2}{4\omega}\|\bg_k^b\|^2]\\
	\leq& V_0+\frac{(T+1)(c_1+n\epsilon_9)\beta_2^2\sigma^2}{\omega^2}+\sum_{k=0}^T\epsilon_{13}h_k^2\sigma_{\mathcal{C}}.\addtag\label{eq:upperofx2}
\end{align*}
From $c_2>0$, $\beta_2>0$, $\omega>0$, $h_k=h_0^k$, $0<h_0<1$,~\eqref{eq:lowerofw}, and~\eqref{eq:upperofx2}, we have
\begin{align*}
    \frac{1}{T+1}&\sum_{k=0}^T\mathbb{E}\left[\frac{1}{n}\sum_{i=1}^n\Vert x_{i,k}-\bar{x}_k\Vert^2\right]  \\
    \leq&\frac{V_0}{nc_2(T+1)}+\frac{(c_1+n\epsilon_9)\beta_2^2\sigma^2}{nc_2\omega^2}+\frac{\epsilon_{13}h_0\sigma_{\mathcal{C}}}{nc_2(T+1)(1-h_0)}.\addtag\label{eq:upperofx3}
\end{align*}
Since Assumption~\ref{as:finite}, it holds that $V_0=\mathcal{O}(n)$. Then we have~\eqref{eq:theo11}. From summing~\eqref{eq:upperofv4} over $k\in[0,T]$, one obtains that
\begin{align*}
    \frac{1}{4}\sum_{k=0}^T\E[n\|\lf(\bar{x}_k)\|^2]&=\frac{1}{4}\sum_{k=0}^T\E[\|\bg_k^b\|^2]\\
   & \leq \frac{V_{4,0}}{\eta}+\frac{L_f^2}{2}\sum_{k=0}^T\E[\|\x_k\|^2_\bk]\\
	&~~~+(T+1)L_f^2\sigma^2\eta.\addtag\label{eq:upperofx4}
\end{align*}
From~\eqref{eq:upperofx3},~\eqref{eq:upperofx4} and $\eta=\frac{\beta_2}{\omega}$, we have
\begin{align*}
    \frac{1}{T+1}\sum_{k=0}^{T}\E[\|\lf(\bar{x}_k)\|^2]
   & \leq \frac{4\omega(f(\bar{x}_0)-f^*)}{\beta_2(T+1)}+\frac{4L_f^2\sigma^2\beta_2}{n\omega}\\
	&~~~+\mathcal{O}(\frac{1}{T+1})+\mathcal{O}(\frac{1}{\omega^2}).\addtag\label{eq:upperofx5}
\end{align*}
Then we complete the proof.
\section{The proof of Theorem~\ref{theo:convergence2}}\label{app-convergence2}
In this proof, in addition to the notations in Appendix~\ref{app-convergence1}, we also denote
\begin{align*}
    &c_3=\eta(\epsilon_4-\eta\epsilon_5)\\
    &\beta_8 = \max\{\frac{1}{2}+\beta_1,\frac{\gamma \underline{\lambda}_L+\omega}{2 \gamma \underline{\lambda}_L}\}\\
    &\beta_9=\frac{1}{\beta_8}\min\{c_2,c_3,\frac{\nu}{2(T+1)^\theta}\}\\
     &\beta_{10}=\frac{1}{\beta_8}\min\{c_2,c_3\}
\end{align*}
From the conditions in Theorem~\ref{theo:convergence2}, we know that the Lemma~\ref{lemma:fixedinquali} still holds.
From the Assumptions~\ref{as:finite} and~\ref{as:PLcondition}, one obtains that
\begin{align*}
    \|\bg_k^b\|^2=n\|\lf(\bar{x}_k)\|^2\geq 2n\nu(f(\bar{x}_k)-f^*)=2\nu V_{4,k}.\addtag\label{upperofV21}
\end{align*}
From~\eqref{eq:lowerofw}, we have
\begin{align*}
\Vert\x_{k}\Vert_\bk^2+n(f(\bar{x}_{k})-f^*)\leq U_k\leq \frac{V_k}{\check{c}_1}.\addtag\label{eq:proveoftheo21}
\end{align*}
Similar to~\eqref{eq:lowerofw}, we have
\begin{align*}
   V_k\leq \beta_8 U_k.\addtag\label{upperofV2}
\end{align*}
Due to $T\geq(\beta_3/\beta_2)^{1/\theta}$, we have $\omega=\beta_2(T+1)^\theta>\beta_3\geq\frac{\beta_2}{\beta_4}$. Combining $\eta=\frac{\beta_2}{\omega}$, we have $\eta<\beta_4$. From~$\eta<\beta_4$,~\eqref{eq:upperofV14},~\eqref{upperofV21}, and~\eqref{upperofV2}, it holds that
\begin{align*}
    \E[V_{k+1}]&\leq \E[V_k]-\E\|\x_k\|_{(\eta\tilde{\epsilon}_1-\eta^2\tilde{\epsilon}_2)\bk}\\
        &~~~-\E\left\Vert\vv_k+\frac{1}{\omega}\g_k^b\right\Vert^2_{\eta(\epsilon_4-\eta\epsilon_5)\bp}-\frac{\eta\nu}{2}\E[V_{4,k}]\\
&~~~+\epsilon_8\sigma^2\eta^2+\epsilon_9n\sigma^2\eta^2+\epsilon_{13}h_k^2\sigma_{\mathcal{C}}\\
&\leq \E[V_k]-\frac{1}{\beta_8}\min\{c_2,c_3,\frac{\eta\nu}{2}\}\E[V_k]\\
        &~~~+\epsilon_8\sigma^2\eta^2+\epsilon_9n\sigma^2\eta^2+\epsilon_{13}h_k^2\sigma_{\mathcal{C}}\\
        &\leq \E[V_k]-\beta_9\E[V_k]+\frac{(\epsilon_8+\epsilon_9n)\sigma^2}{(T+1)^{2\theta}}+\epsilon_{13}h_k^2\sigma_{\mathcal{C}}.\addtag\label{eq:proveoftheo22}
\end{align*}
Since $\beta_1>c_0>1$, then we have $\beta_8>1$. Combining $\beta_2<1$, it holds that
\begin{align*}
   \beta_9\leq c_3 <\eta\epsilon_4&= \eta\left(\frac{3\omega-1}{4}-3\underline{\lambda}_L^{-1}\right)<\frac{3\beta_2}{4}<\frac{3}{4}.\addtag\label{eq:proveoftheo23}
\end{align*}
 Then, from~\eqref{eq:proveoftheo23} and $\eta<\beta_4$, one obtains that 
\begin{align*}
    0<\beta_9<\frac{3}{4}.\addtag\label{eq:proveoftheo24}
\end{align*}
From~\eqref{eq:lowerofw},~\eqref{eq:proveoftheo22}, and~\eqref{eq:proveoftheo24}, we have
\begin{align*}
   \E[V_{k+1}]&\leq (1-\beta_9)^{k+1}\E[V_0]+\frac{(\epsilon_8+\epsilon_9n)\sigma^2}{(T+1)^{2\theta}}\sum_{l=0}^k(1-\beta_9)^l\\
   &~~~+\sum_{l=0}^k\epsilon_{13}\sigma_{\mathcal{C}}(1-\beta_9)^l h_0^{2(k-l)}\\
   &\leq(1-\beta_9)^{k+1}\E[V_0]+\frac{(\epsilon_8+\epsilon_9n)\sigma^2}{\beta_9(T+1)^{2\theta}}\\
   &~~~+\epsilon_{13}\sigma_{\mathcal{C}}(1-\beta_9-h_0^2)^{k+1}\\
   &\leq(1-\beta_9)^{k+1}(\E[V_0]+\epsilon_{13}\sigma_{\mathcal{C}})+\frac{(\epsilon_8+\epsilon_9n)\sigma^2}{\beta_9(T+1)^{2\theta}},\addtag\label{eq:proveoftheo25}
\end{align*}
where the last inequality holds due to $h_0^2<\frac{1}{4}<1-\beta_9$ and~\cite[Lemma~5]{yi2021linear}. Since $\beta_9=\mathcal{O}(1/(T+1)^\theta)$, $\epsilon_{13}=\mathcal{O}(1)$, $\theta\in(0,1)$,~\eqref{eq:linearbound},~\eqref{eq:proveoftheo21}, and~\eqref{eq:proveoftheo25}, we have
\begin{align*}
    \Vert\x_{k}\Vert_\bk^2+n(f(\bar{x}_{k})-f^*)=\mathcal{O}(\frac{n}{T^\theta}),\forall k\leq T,\addtag\label{eq:proveoftheo26}
\end{align*}
which means that there exists a constant $c_4>0$ such that
\begin{align*}
    \Vert\x_{k}\Vert_\bk^2+n(f(\bar{x}_{k})-f^*)<nc_4,\addtag\label{eq:proveoftheo27}
\end{align*}
Denote $\breve{V}_{k+1}=V_{k+1}-V_{4,k+1}$, combining~\eqref{eq:lyapunovofv4} and~\eqref{eq:upperofV14}, we have
\begin{align*}
      \E[\breve{V}_{k+1}]&\leq \E[\breve{V}_k]-\E\|\x_k\|_{(\eta\tilde{\epsilon}_1-\eta^2\tilde{\epsilon}_2)\bk}\\
        &~~~-\E\left\Vert\vv_k+\frac{1}{\omega}\g_k^b\right\Vert^2_{\eta(\epsilon_4-\eta\epsilon_5)\bp}+\epsilon_7\eta^2\E\|\bg_k\|^2\\
 &~~~+\epsilon_8\sigma^2\eta^2+\epsilon_9n\sigma^2\eta^2+\epsilon_{13}h_k^2\sigma_{\mathcal{C}},\addtag\label{eq:proveoftheo28}
\end{align*}
From~\eqref{eq:propertyofbg2} and Cauchy--Schwarz inequality, we have 
\begin{align*}
    \E_{\xi_k}[\Vert\bg_k\Vert^2]&\leq 2\E_{\xi_k}[\Vert\bg_k-\bg^b_k\Vert^2]+2\Vert\g^b_k\Vert^2\\
    &\leq 2L_f^2\Vert\x_k\Vert^2_\bk+2\Vert\g^b_k\Vert^2.\addtag\label{eq:proveoftheo29}
\end{align*}
From~\eqref{eq:lsmooth}, we have
\begin{align*}    \Vert\g^b_k\Vert^2=n\|\lf(\bar{x}_k)\|^2\leq 2nL_f(f(\bar{x}_k)-f^*)=2L_f V_{4,k}\addtag\label{eq:proveoftheo30}
\end{align*}
Since $\beta_1>1$, we have $\beta_8>\frac{1}{2}+1>\frac{3}{2}$. Combining~\eqref{eq:lowerofw},~\eqref{upperofV2}, one obtains that
\begin{align*}    
0\leq\breve{V}_{k}\leq\beta_8 U_k-V_{4,k}<\beta_8(V_{1,k}+V_{2,k})\addtag\label{eq:proveoftheo31}
\end{align*}
Similar to~\eqref{eq:proveoftheo24}, it holds that $0<\beta_{10}<\frac{3}{4}$. From~\eqref{eq:proveoftheo28}--\eqref{eq:proveoftheo31}, we have
\begin{align*}    
\E[\breve{V}_{k+1}]&\leq (1-\beta_{10})\E[\breve{V}_k]+\epsilon_{13}h_k^2\sigma_{\mathcal{C}}\\
&~~~+\eta^2(2L_f^2\epsilon_7nc_4+4L_f\epsilon_7nc_4+\epsilon_8\sigma^2+\epsilon_9n\sigma^2)\\
&\leq(1-\beta_{10})^{k+1}\E[\breve{V}_0]+\epsilon_{13}\sigma_{\mathcal{C}}(1-\beta_{10}-h_0^2)^{k+1}\\
&~~~+\frac{\eta^2}{\beta_{10}}(2L_f^2\epsilon_7nc_4+4L_f\epsilon_7nc_4+\epsilon_8\sigma^2+\epsilon_9n\sigma^2).\addtag\label{eq:proveoftheo32}
\end{align*}
where the last inequality holds due to $h_0^2<\frac{1}{4}<1-\beta_{10}$. Since $\eta=1/(T+1)^\theta$, we have~\eqref{eq:theo21}. From~\eqref{eq:lyapunovofv4} and~\eqref{eq:upperofV11}, we have
\begin{align*}    
\E_{\xi_k}[V_{4,k+1}]&\leq V_{4,k}-\frac{\eta}{4}\|\bg_k\|^2+\frac{\eta L_f^2}{2}\|\x_k\|_\bk^2-\frac{\eta}{4}\|\bg_k^b\|^2\\
 &~~~+\eta^2L_f(\sigma^2+\|\bg_k\|^2),\addtag\label{eq:proveoftheo33}
\end{align*}
From $\eta=\frac{\beta_2}{\omega}$ and $\omega>\beta_3>4\beta_2L_f$, we know that $\eta L_f<\frac{1}{4}$. Then combining~\eqref{upperofV21},~\eqref{eq:proveoftheo33} can be rewritten as
\begin{align*}    
\E_{\xi_k}[V_{4,k+1}]&\leq V_{4,k}+\frac{\eta L_f^2}{2}\|\x_k\|_\bk^2-\frac{\eta}{4}\|\bg_k^b\|^2+\frac{\eta}{4}\sigma^2\\
&\leq(1-\frac{\eta\nu}{4}) V_{4,k}+\frac{\eta L_f^2}{2}\|\x_k\|_\bk^2+\frac{\eta}{4}\sigma^2\\
&\leq(1-\frac{\eta\nu}{4})^{k+1} V_{4,0}+\frac{1}{\nu}( L_f^2\|\x_k\|_\bk^2+\frac{\sigma^2}{2}),\addtag\label{eq:proveoftheo34}
\end{align*}
From $\eta=1/(T+1)^\theta$,~\eqref{eq:theo21},~\eqref{eq:linearbound}, and~\eqref{eq:proveoftheo34}, we have~\eqref{eq:theo22}

\section{The proof of Theorem~\ref{theo:convergence3}}\label{app-convergence3}
In this proof, in addition to the notations in Appendix~\ref{app-convergence1}, we also denote
\begin{align*}
&\breve{\epsilon}_{1,k}=\frac{\gamma_k}{2}\underline{\lambda}_L-(\frac{9\omega_k+4}{4}+\frac{5}{4}L_f^2)\\
&\breve{\epsilon}_{2,k}=(12+\frac{16}{\varphi_1})L_f^2+(4+\frac{8}{\varphi_1}\gamma_k^2\bar{\lambda}_L^2)+1+2\omega_k^2+3\gamma_k^2\bar{\lambda}_L^2\\
&\breve{\epsilon}_{3,k}=\frac{\gamma_k}{4}\bl-\frac{1}{2}(1+L_f^2)\bk\\
&\breve{\epsilon}_{4,k}=(\frac{5L_f^2}{2}+\frac{3}{2})\bk+(3\gamma_k^2+\omega_k(\omega_k+\gamma_k))\bl\\
&\breve{\epsilon}_{5,k}=\eta_k(\frac{3\omega_k-1}{4}-3\underline{\lambda}_L^{-1})\\
&~~~~~-\eta_k^2(\omega_k^2\bar{\lambda}_L+\underline{\lambda}_L^{-1}+(4+\frac{8}{\varphi_1})\omega_k^2\underline{\lambda}_L)\\
&~~~~~~-b_k(\frac{3\eta_k^2\omega_k^2\bar{\lambda}_L}{2}+\frac{\eta_k\omega_k}{4}+\frac{1}{2}+\frac{\beta_1}{4}-2\eta_k\gamma_k)\\
&\breve{\epsilon}_{6,k}=\frac{1}{8}-(\frac{2(1+\beta_1)}{\omega_k^2\underline{\lambda}_L}+\frac{1}{\omega_k^2\underline{\lambda}_L^2})L_f^2\\
&~~~~~~-4\eta_k L_f^2(\frac{1}{2\underline{\lambda}_L}(b_k+b_k\beta_1+b_k^2+b_k^2\beta_1)+\frac{b_k}{2})\\
&~~~~~~-\eta_k((\frac{1+\beta_1}{\omega_k^2\underline{\lambda}_L}+\frac{1}{\omega_k^2\underline{\lambda}_L^2}+\frac{3}{2})L_f^2+L_f)\\
&~~~~~~-\eta_kL_f^2b_k(\beta_1^2+2\left(\frac{(1+\beta_1)^2}{\eta_k\omega_k^2}+\frac{1+\beta_1}{2\omega_k^2}\right)\frac{1}{\underline{\lambda}_L}+1)\\
&\breve{\epsilon}_{7,k}=(\frac{2(1+\beta_1)^2}{\eta\omega_k^2\underline{\lambda}_L}+\frac{1+\beta_1}{\omega_k^2\underline{\lambda}_L}+\frac{1+\eta}{\eta\omega_k^2\underline{\lambda}_L^2}+\frac{3}{2})L_f^2+L_f\\
&~~~~~~+4 L_f^2(\frac{1}{2\underline{\lambda}_L}(b_k+b_k\beta_1+b_k^2+b_k^2\beta_1)+\frac{b_k}{2})\\
&~~~~~+L_f^2b_k(\beta_1^2+2\left(\frac{(1+\beta_1)^2}{\eta_k\omega_k^2}+\frac{1+\beta_1}{2\omega_k^2}\right)\frac{1}{\underline{\lambda}_L}+1)\\
&\breve{\epsilon}_{8,k}=11+\frac{16}{\varphi_1}+2b_k\\
&\breve{\epsilon}_{9,k}=\frac{\varphi_1}{2}+\frac{\varphi_1^2}{2}\\
&\breve{\epsilon}_{10,k}=\frac{1+b_k}{2}(\gamma_k+2 \omega_k)\bar{\lambda}_Lr_0+2\omega_k r_0+\frac{\gamma_k}{4}\bar{\lambda}_Lr_0\\
&\breve{\epsilon}_{11,k}=\frac{(8+(7+6b_k)\varphi_1)\gamma_k^2\bar{\lambda}_L^2r_0}{\varphi_1}\\
	&~~~~~~~~~+(1+b_k+2\omega_k^2+2b_k\omega_k(\omega_k+\gamma_k)\bar{\lambda}_L)r_0\\
&\breve{\epsilon}_{12,k}=\frac{1}{\underline{\lambda}_L}(b_k+b_k\beta_1+b_k^2+b_k^2\beta_1)+b_k\\
&\breve{\epsilon}_{13,k}=\eta_k(\gamma_k+6 \omega_k)\bar{\lambda}_Lr^2+\eta_k^2(14\gamma_k^2\bar{\lambda}_L^2r^2+2+4\omega^2)+1\\
&~~~~~~~~~+b_kr^2((\frac{5\omega_k}{2}+\gamma_k)\bar{\lambda}_L\eta_k\\
&~~~~~~~~~+(\frac{(\omega_k^2+\omega_k\gamma_k)\bar{\lambda}_L}{2}+\frac{3\gamma_k^2\bar{\lambda}_L^2}{4}+1)\eta_k^2)\\
   &\bar{\sigma}=2L_f f^*-2L_f\frac{1}{n}\sum_{i=1}{n}f_i^*\\
\end{align*}

\begin{lemma}
    Suppose Assumptions~\ref{as:strongconnected}--\ref{as:boundedvar} and~\ref{as:compressor} hold. If $\gamma_k=\beta_1\omega$, $\beta_1>1$, $\alpha_x\in(0,\frac{1}{r})$, and $\eta_k=\beta_2/\omega_k$, it  holds that
    \begin{align*}
     \E_{\xi_k}[V_{k+1}]&\leq V_k-\|\x_k\|_{(\eta_k\breve{\epsilon}_{1,k}-\eta_k^2\breve{\epsilon}_{2,k})\bk+b_k\eta_k(\breve{\epsilon}_{3,k}-\eta_k\breve{\epsilon}_{4,k})}\\
	&~~~-\Vert\vv_k+\frac{1}{\omega_k}\g_k^b\Vert^2_{\breve{\epsilon}_{5,k}\bp}\\
 &~~~-\eta_k\breve{\epsilon}_{6,k}\|\bg_k\|-\frac{\eta}{4}\|\bg_k^b\|^2+\breve{\epsilon}_{7,k}\sigma^2\eta^2+\breve{\epsilon}_{8,k}n\sigma^2\eta^2\\
 &~~~-(\breve{\epsilon}_{9,k}-\eta_k\breve{\epsilon}_{10,k}-\eta^2_k\breve{\epsilon}_{11,k})\Vert\x_k-\x_{k}^c\Vert^2\\
	&~~~+\eta_k^2\breve{\epsilon}_{12,k}(2L_fV_{4,k}+n\bar{\sigma})+\breve{\epsilon}_{13,k}h_k^2\sigma_{\mathcal{C}},\addtag\label{eq:upperofV24}
    \end{align*}
    where $V_{k+1}=\sum_{i=1}^5 V_{i,k+1}$.
\end{lemma}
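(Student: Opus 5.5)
The plan is to repeat the proof of Lemma~\ref{lemma:fixedinquali}, but now keep every term involving $b_k=\frac{1}{\omega_k}-\frac{1}{\omega_{k+1}}$ instead of setting $b_k=0$. First, sum the five inequalities \eqref{eq:lyapunovofv1}--\eqref{eq:lyapunovofv5} of Lemma~\ref{lemma:lyapunov}. Since $\omega_k=\beta_0(k+t_1)$ is nondecreasing, that lemma applies.

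The cross terms $\hx_k^\top\bk(\vv_k+\frac{1}{\omega_k}\g_k^b)$ should largely cancel. Their coefficients are $-\eta_k\omega_k$ from $V_1$, $(1+b_k)\eta_k\omega_k(1+\beta_1)$ from $V_2$, and $-(1+b_k)\eta_k\gamma_k$ from $V_3$. With $\gamma_k=\beta_1\omega_k$ the net coefficient is $b_k\eta_k\omega_k$. I will bound this residual by Cauchy--Schwarz as
\[
\tfrac{b_k\eta_k\gamma_k}{8}\|\hx_k\|^2_{\bl}+2b_k\eta_k\gamma_k\|\vv_k+\tfrac{1}{\omega_k}\g_k^b\|^2_{\bp}.
\]
Similarly, the terms $\hx_k^\top\bk(\g_{k+1}^b-\g_k^b)$ carry coefficient $(1+b_k)\eta_k\beta_1$ from $V_2$ and $-(1+b_k)\eta_k\gamma_k/\omega_k$ from $V_3$, so they cancel exactly.

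Next I handle the remaining stochastic and gradient terms. Replace $\E_{\xi_k}\Vert\bg_k^s\Vert^2$ by $2\sigma^2+2\|\bg_k\|^2$ using \eqref{eq:upperofV11}. For the new terms $\E_{\xi_k}\|\g_{k+1}^b\|^2$ and $\|\x_{k+1}\|_\bk^2$ that come with factors $b_k$ or $b_k+b_k^2$, I use two bounds:
\[
\|\g_{k+1}^b\|^2\le 2\|\g_{k+1}^b-\g_k^b\|^2+2\|\g_k^b\|^2\le 2\eta_k^2L_f^2\|\bg_k^s\|^2+2\|\g_k^b\|^2,
\]
via \eqref{eq:propertyofbg3}, and $\|\g_k^b\|^2=n\|\lf(\bar x_k)\|^2$. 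The latter is controlled through Lemma~\ref{lemma:lsmooth}: it produces the term $2L_fV_{4,k}$, and the constant $n\bar\sigma$ absorbs the heterogeneity between $f^*$ and the local minima $f_i^*$. For $\|\x_{k+1}\|_\bk^2$ I would reuse the expansion \eqref{eq:upperboundofx1} (or the bound \eqref{eq:upperboundofc2} on $\|\x_{k+1}-\x_k\|^2$). This generates the $b_k\eta_k(\breve\epsilon_{3,k}-\eta_k\breve\epsilon_{4,k})$ block on $\|\x_k\|^2$ and the $b_k$ contributions to $\breve\epsilon_{5,k}$, $\breve\epsilon_{10,k}$, $\breve\epsilon_{11,k}$ and $\breve\epsilon_{13,k}$.

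Then convert everything to the state variables. Every $\|\hx_k\|^2_{\bk}$ or $\|\hx_k\|^2_{\bl}$ is bounded via \eqref{eq:upperofV12} and \eqref{eq:propertyofk1} by $\|\x_k\|_\bk^2$, the error $\|\x_k-\x_k^c\|^2$ and $h_k^2\sigma_{\mathcal C}$. The term $\|\hx_k-\x_k\|^2$ is bounded via \eqref{eq:propertyofcompressors0}, and $\|\g_k-\g_k^b\|^2$ via \eqref{eq:propertyofbg1}. Finally, collect coefficients of $\|\x_k\|_\bk^2$, $\|\vv_k+\frac{1}{\omega_k}\g_k^b\|_\bp^2$, $\|\bg_k\|^2$, $\|\bg_k^b\|^2$, $\|\x_k-\x_k^c\|^2$, $\sigma^2$ and $h_k^2\sigma_{\mathcal C}$, and match them with the $\breve\epsilon_{i,k}$.

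I expect the main obstacle to be this bookkeeping of the $b_k$-dependent terms. In particular, the $\g_{k+1}^b$ terms must be reduced to time-$k$ quantities without leaving an uncontrolled $\|\bg_k^b\|^2$ with a positive $O(1)$ coefficient. That is why I route them through $V_{4,k}$ via Lemma~\ref{lemma:lsmooth} with an $\eta_k^2\breve\epsilon_{12,k}$ prefactor, rather than through $\|\bg_k^b\|^2$. The key identity making this $O(\eta_k^2)$ is that $b_k=\beta_0/(\omega_k\omega_{k+1})=O(\eta_k^2)$.
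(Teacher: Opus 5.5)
Your overall route is the paper's route. You sum the five bounds of Lemma~\ref{lemma:lyapunov}, replace $\E_{\xi_k}\|\bg_k^s\|^2$ via \eqref{eq:upperofV11}, convert $\hx_k$-norms via \eqref{eq:upperofV12} and the compressor bounds, and control $\E_{\xi_k}\|\g^b_{k+1}\|^2$ through \eqref{eq:propertyofbg3} and Lemma~\ref{lemma:lsmooth}, as in \eqref{eq:upperofV21}--\eqref{eq:upperofV22}. Your exact cancellation of the $\hx_k^\top\bk(\g^b_{k+1}-\g^b_k)$ terms is also correct.

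One step is wrong as written: the identity $\|\g_k^b\|^2=n\|\lf(\bar x_k)\|^2$ is false. That quantity is $\|\bg_k^b\|^2$, whereas $\|\g_k^b\|^2=\sum_{i=1}^n\|\lf_i(\bar x_k)\|^2\ge\|\bg_k^b\|^2$. With your identity, Lemma~\ref{lemma:lsmooth} applied to $f$ would give $\|\g_k^b\|^2\le 2L_fV_{4,k}$. This fails for heterogeneous $f_i$: at a minimizer of $f$ the right side vanishes while $\sum_i\|\lf_i\|^2$ need not. The step that actually produces your $n\bar\sigma$ is Lemma~\ref{lemma:lsmooth} applied to each $f_i$ with its own minimum $f_i^*$, as in \eqref{eq:upperofV21}. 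This tacitly requires every $f_i$ to be bounded below.

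The second problem is that your bookkeeping of the residual cross term does not close. The terms $\tfrac{b_k\eta_k\gamma_k}{8}\|\hx_k\|^2_{\bl}$ and $2b_k\eta_k\gamma_k\|\vv_k+\tfrac{1}{\omega_k}\g_k^b\|^2_{\bp}$ that your Cauchy--Schwarz step would add are already in \eqref{eq:lyapunovofv3}. They come from the add-and-subtract in part (iii) that produced the coefficient $-(1+b_k)\eta_k\gamma_k$.

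In the paper's accounting, which is what the $b_k$-parts of $\breve{\epsilon}_{3,k}$ and $\breve{\epsilon}_{5,k}$ reflect, the leftover $b_k\eta_k\omega_k\,\hx_k^\top\bk(\vv_k+\frac{1}{\omega_k}\g_k^b)$ is not bounded at all. It cancels exactly against the term $-b_k\eta_k\omega_k\,\hx_k^\top\bk(\vv_k+\frac{1}{\omega_k}\g_k^b)$. That term appears when $\frac{b_k}{2}\E_{\xi_k}\|\x_{k+1}\|^2_{\bk}=b_k\E_{\xi_k}[V_{1,k+1}]$ is bounded by $b_k$ times \eqref{eq:lyapunovofv1}. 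You invoke that expansion via \eqref{eq:upperboundofx1} but never account for its cross term. Meanwhile your extra Cauchy--Schwarz copy already consumes the margin in $\breve{\epsilon}_{3,k}$: $\frac{\gamma_k}{2}-\frac{\gamma_k}{4}-\frac{\gamma_k}{4}=0$. The \eqref{eq:upperboundofc2} alternative yields no helpful $b_k\eta_k\breve{\epsilon}_{3,k}$ block at all. Either variant can be repaired into a valid bound of the same order, but not one with the stated constants. The clean fix is to drop your Cauchy--Schwarz step and let the two cross terms cancel.

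Finally, the coefficient the computation produces in front of $2L_fV_{4,k}+n\bar\sigma$ is $\breve{\epsilon}_{12,k}$ itself. It comes from the factors $b_k$ and $b_k+b_k^2$ in \eqref{eq:lyapunovofv2}--\eqref{eq:lyapunovofv3}, doubled by \eqref{eq:upperofV22}. Your remark $b_k=\beta_0/(\omega_k\omega_{k+1})=\mathcal{O}(\eta_k^2)$ explains why this coefficient is already $\mathcal{O}(\eta_k^2)$. It does not justify an additional factor $\eta_k^2$, so the prefactor $\eta_k^2\breve{\epsilon}_{12,k}$ in the displayed statement is an inconsistency you inherit rather than prove.
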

\begin{proof}
\textbf{(i)} We have
\begin{align*}
\|\g_k^b\|^2&=\sum_{i=1}^n\|\lf_i(\bar{x}_k)\|^2\leq\sum_{i=1}^n2L_f(f(x)-f^*)\\
    &=2nL_f(f(x)-f^*)+n\bar{\sigma},\addtag\label{eq:upperofV21}
\end{align*}
the inequality cones from~Assumption~\ref{as:strongconnected} and~\eqref{eq:lsmooth}. Furthermore, we also have
\begin{align*}
    \|\g_{k+1}^b\|^2&=\|\g_{k+1}^b-\g_{k}^b+\g_{k}^b\|^2\leq2\|\g_{k+1}^b-\g_{k}^b\|^2+2\|\g_{k}^b\|^2\\
    &\leq2(\eta_k^2L_f^2\Vert\bg_k^s\Vert^2+2nL_f(f(x)-f^*)+n\bar{\sigma}),\addtag\label{eq:upperofV22}
\end{align*}
the first and second inequalities hold due to~Cauchy–Schwarz inequality and~\eqref{eq:propertyofbg3}, respectively. 

From Lemma~\ref{lemma:lyapunov},~\eqref{eq:propertyofcompressors1},~\eqref{eq:propertyofk1},~\eqref{eq:upperofV11},~\eqref{eq:upperofV12} and~\eqref{eq:upperofV22}, we have~\eqref{eq:upperofV24}
\end{proof}

We introduce the following useful lemmas.
\begin{lemma}\label{lemma:useful}
    \cite{yi2022primal} Let $\{z_k\},~\{r_{1,k}\},$ and $\{r_{2,k}\}$ be sequences. Suppose there exists $t_1\in\mathbb{N}_+$ such that
    \begin{align*}
        &z_k\geq 0,~z_{k+1}\leq(1-r_{1,k})z_k+r_{2,k},\\
        & 1>r_{1, k} \geq \frac{a_1}{\left(k+t_1\right)^\delta} \\
& r_{2, k} \leq \frac{a_2}{\left(k+t_1\right)^2}, \quad \forall k \in \mathbb{N}
    \end{align*}
    where $\delta>0$, $a_1>0$ and $a_2>0$ are constants.

    (i) if $\delta=1$, then 
    \begin{align*}
        z_k \leq \phi_1\left(k, t_1, a_1, a_2, z_0\right), \forall k \in \mathbb{N}_+,\addtag\label{eq:lemma61}
    \end{align*}
    where
    \begin{align*}
        \phi_1\left(k, t_1, a_1, a_2, z_0\right)= & \frac{t_1^{a_1} z_0}{\left(k+t_1\right)^{a_1}}+\frac{a_2}{\left(k+t_1-1\right)^2} \\
& +4 a_2 s_1\left(k+t_1\right),\addtag\label{eq:lemma62}
    \end{align*}
    with
    \begin{align*}
        s_1(k)= \begin{cases}\frac{1}{\left(a_1-1\right) k}, & \text { if } a_1>1, \\ \frac{\ln (k-1)}{k}, & \text { if } a_1=1, \\ \frac{-t_1^{a_1-1}}{\left(a_1-1\right) k^{a_1}}, & \text { if } a_1<1 .\end{cases}
    \end{align*}

    (ii) if $\delta=0$, then 
    \begin{align*}
        z_k \leq \phi_2\left(k, t_1, a_1, a_2, z_0\right), \forall k \in \mathbb{N}_{+}\addtag\label{eq:lemma63}
    \end{align*}
    where
    \begin{align*}
        \phi_2\left(k, t_1, a_1, a_2, z_0\right)= & \left(1-a_1\right)^k z_0\\
	&+a_2\left(1-a_1\right)^{k+t_1-1}\left(\left[t_2-t_1\right]s_2\left(t_1\right)\right. \\
& \left.+\left(\left[t_3-t_1\right]-\left[t_2-t_1\right]\right) s_2\left(t_3\right)\right) \\
& +\frac{\mathbf{1}_{\left(k+t_1-1 \geq t_3\right)} 2 a_2}{-\ln \left(1-a_1\right)\left(k+t_1\right)^2\left(1-a_1\right)},\addtag\label{eq:lemma64}
    \end{align*}
    with $s_2(k)=\frac{1}{k^2\left(1-a_1\right)^k}, t_2=\left\lceil\frac{-2}{\ln \left(1-a_1\right)}\right\rceil \text {, and } t_3=\left\lceil\frac{-4}{\ln \left(1-a_1\right)}\right\rceil \text {. }$
\end{lemma}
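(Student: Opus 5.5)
The plan is to unroll the recursion and then bound the resulting products and sums separately in each of the two regimes. Since $z_k\geq0$ and $1-r_{1,k}>0$, iterating $z_{k+1}\leq(1-r_{1,k})z_k+r_{2,k}$ gives
\begin{align*}
z_k\leq \prod_{j=0}^{k-1}(1-r_{1,j})\,z_0+\sum_{l=0}^{k-1}\Big(\prod_{j=l+1}^{k-1}(1-r_{1,j})\Big)r_{2,l}.
\end{align*}
Every product is then handled with $1-x\leq e^{-x}$ together with the lower bound on $r_{1,j}$.

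\textbf{Case (i), $\delta=1$.}
\begin{itemize}
\item \emph{Products.} Use $\sum_{j=l+1}^{k-1}\frac{1}{j+t_1}\geq\int_{l+1+t_1}^{k+t_1}\frac{dx}{x}=\ln\frac{k+t_1}{l+1+t_1}$. This gives
\begin{align*}
\prod_{j=l+1}^{k-1}(1-r_{1,j})\leq\Big(\frac{l+1+t_1}{k+t_1}\Big)^{a_1}.
\end{align*}
\item \emph{First term.} Taking $l=-1$ bounds it by $t_1^{a_1}z_0/(k+t_1)^{a_1}$.
\item \emph{Last summand.} Isolate $l=k-1$; it contributes at most $a_2/(k+t_1-1)^2$.
\item \emph{Remaining summands.} Write each as $(l+1+t_1)^{a_1}/(l+t_1)^2$ and use $\big(\tfrac{l+1+t_1}{l+t_1}\big)^2\leq4$. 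The remaining sum is then at most
\begin{align*}
\frac{4a_2}{(k+t_1)^{a_1}}\sum_{m}(m+1)^{a_1-2}.
\end{align*}
\item \emph{Integral comparison.} Compare this sum with $\int x^{a_1-2}dx$.
\begin{itemize}
\item If $a_1>1$, the integral is of order $(k+t_1)^{a_1-1}/(a_1-1)$, giving $\frac{1}{(a_1-1)(k+t_1)}$.
\item If $a_1=1$, it is a logarithm, giving $\ln(k+t_1-1)/(k+t_1)$.
\item If $a_1<1$, the integral converges and is dominated by its lower endpoint, giving $t_1^{a_1-1}/((1-a_1)(k+t_1)^{a_1})$.
\end{itemize}
These three bounds are exactly $s_1(k+t_1)$, which yields \eqref{eq:lemma61}–\eqref{eq:lemma62}.
\end{itemize}

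\textbf{Case (ii), $\delta=0$.}
\begin{itemize}
\item \emph{Reduction.} Now $1-r_{1,j}\leq1-a_1$, so
\begin{align*}
z_k\leq(1-a_1)^kz_0+a_2(1-a_1)^{k+t_1-1}\sum_{m=t_1}^{k+t_1-1}s_2(m),
\end{align*}
after substituting $m=l+t_1$ and factoring out $(1-a_1)^{k+t_1-1}$.
\item \emph{Shape of $s_2$.} We have $\frac{d}{dm}\ln s_2(m)=-\frac{2}{m}-\ln(1-a_1)$. So $s_2$ is decreasing for $m<t_2$ and increasing for $m\geq t_2$.
\item \emph{Splitting the sum.}
\begin{itemize}
\item On $[t_1,t_2)$, bound each term by $s_2(t_1)$; this gives $[t_2-t_1]s_2(t_1)$.
\item On $[t_2,t_3)$, $s_2$ is increasing, so bound each term by $s_2(t_3)$; this gives the second bracketed term.
\item The tail $m\geq t_3$ is handled next.
\end{itemize}
\end{itemize}

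The tail $m\geq t_3$ is the main obstacle. It is nonempty only when $k+t_1-1\geq t_3$, which is the indicator in \eqref{eq:lemma64}. There I would show the terms grow at least geometrically. Indeed, $-\ln(1-a_1)\geq 4/m$, so
\begin{align*}
\ln s_2(m+1)-\ln s_2(m)\geq -\ln(1-a_1)-\frac{2}{m}\geq -\tfrac12\ln(1-a_1).
\end{align*}
Summing backwards from $m=k+t_1-1$, the tail is then at most a constant times the last term $s_2(k+t_1-1)$, with the constant of order $2/(-\ln(1-a_1))$.

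Multiplying by $a_2(1-a_1)^{k+t_1-1}$ turns the last term into $a_2/((k+t_1-1)^2)$. Replacing $(k+t_1-1)^{-2}$ by a multiple of $(k+t_1)^{-2}$ should give the factor $\frac{2a_2}{-\ln(1-a_1)(k+t_1)^2(1-a_1)}$. I expect the stated constant to follow after carefully tracking the off-by-one indices. The natural way to absorb these is the extra $1/(1-a_1)$ in \eqref{eq:lemma64}, together with the ratio $\big(\tfrac{k+t_1}{k+t_1-1}\big)^2$.
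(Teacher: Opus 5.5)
\textbf{Verdict: correct; one step in case (ii) still has to be filled in.} The paper gives no proof of this lemma; it imports it from the cited primal--dual SGD work, so your argument has to stand on its own.

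Case (i) is complete. Unrolling, $1-x\le e^{-x}$, the harmonic-sum versus logarithm comparison, the factor $4$ from $\bigl(\frac{l+1+t_1}{l+t_1}\bigr)^2\le 4$, and the final sum--integral comparison reproduce $\phi_1$ exactly. For $a_1>1$ the constant really is $1$, not merely ``of order'': compare with $\int_{t_1}^{k+t_1-1}x^{a_1-2}dx$ when $a_1<2$ and with $\int_{t_1+1}^{k+t_1}x^{a_1-2}dx$ when $a_1\ge 2$. In case (ii), the reduction to $a_2(1-a_1)^{k+t_1-1}\sum_{m=t_1}^{k+t_1-1}s_2(m)$, the monotonicity of $s_2$, and the two bracketed pieces are also right, reading $[\cdot]$ as the positive part.

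\textbf{The tail $m\ge t_3$.} This step needs more than off-by-one tracking. Write $u=-\ln(1-a_1)$ and $M=k+t_1-1$.
\begin{itemize}
\item Your ratio bound $s_2(m+1)/s_2(m)\ge e^{u/2}$ is correct. However, the resulting geometric constant is $1/(1-e^{-u/2})$, which is strictly larger than $2/u$ and tends to $1$, not $0$, as $a_1\to 1$.
\item So the tail is \emph{not} at most $\frac{2}{u}s_2(M)$. Consequently you cannot spend the whole factor $1/(1-a_1)$ on the index shift $(k+t_1-1)\to(k+t_1)$.
\item It still closes. From $1-e^{-x}\ge xe^{-x}$ you get $1/(1-e^{-u/2})\le 2e^{u/2}/u$. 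From $M\ge t_3\ge 4/u$ you get $(1+1/M)^2\le e^{2/M}\le e^{u/2}$.
\item The two factors $e^{u/2}$ multiply to exactly $1/(1-a_1)$. Hence $a_2(1-a_1)^M\cdot\frac{2e^{u/2}}{u}s_2(M)=\frac{2a_2e^{u/2}}{uM^2}\le\frac{2a_2}{u(1-a_1)(k+t_1)^2}$.
\end{itemize}

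\textbf{A cleaner alternative.} An integral comparison gives the third term of $\phi_2$ directly and explains its shape.
\begin{itemize}
\item On $[t_3,\infty)$, $s_2$ is increasing and $s_2'(x)=s_2(x)(u-2/x)\ge\frac{u}{2}s_2(x)$.
\item Therefore $\sum_{m=t_3}^{M}s_2(m)\le\int_{t_3}^{M+1}s_2(x)\,dx\le\frac{2}{u}s_2(M+1)$.
\item Multiplying by $a_2(1-a_1)^M$ gives exactly $\frac{2a_2}{u(1-a_1)(M+1)^2}$, with $M+1=k+t_1$.
\end{itemize}
In this form the $1/(1-a_1)$ is simply the one-step mismatch between the exponent $M$ in the prefactor and the exponent $M+1$ in $s_2(M+1)$.
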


We then ready to prove Theorem~\ref{theo:convergence3}. We also denote
\begin{align*}
    &m_1=(\frac{\beta_1\underline{\lambda}_L}{2}-\frac{9}{4})-1\\
    &m_2=(12+\frac{16}{\varphi_1})+(4+\frac{8}{\varphi_1}\beta_1^2\bar{\lambda}_L^2)+1+2+3\beta_1^2\bar{\lambda}_L^2\\
    &m_3=\frac{\beta_1\underline{\lambda}_L}{4}-1\\
    &m_4=4+(3\beta_1^2+1+\beta_1)\bar{\lambda}_L\\
    &m_5=\frac{\beta_2}{4}-\beta_2^2(\bar{\lambda}_L+\underline{\lambda}_L^{-1}+(4+\frac{8}{\varphi_1})\underline{\lambda}_L)\\
    &m_6=4\eta_k L_f^2(\frac{1}{2\underline{\lambda}_L}(b_k+b_k\beta_1+b_k^2+b_k^2\beta_1)+\frac{b_k}{2})\\
&~~~~~~+\eta_kL_f^2b_k(\beta_1^2+2\left(\frac{(1+\beta_1)^2}{\eta_k\omega_k^2}+\frac{1+\beta_1}{2\omega_k^2}\right)\frac{1}{\underline{\lambda}_L}+1)\\
    &m_7=\frac{\varphi_1}{2}+\frac{\varphi_1^2}{2}\\
&m_8=(\beta_1+2)\bar{\lambda}_Lr_0+2 r_0+\frac{\beta_1}{4}\bar{\lambda}_Lr_0\\
&m_9=\frac{21\varphi_1)\beta_1^2\bar{\lambda}_L^2r_0}{\varphi_1}+(4+2(1+\beta_1)\bar{\lambda}_L)r_0\\
&m_{10}=\frac{3\beta_2^2\bar{\lambda}_L}{2}+\frac{\beta_2}{4}+\frac{1}{2}+\frac{\beta_1}{4}-2\beta_1\beta_2\\
&m_{11}=\beta_2 L_f^2\bigg(\frac{2}{\underline{\lambda}_L}(2+2\beta_1)+\beta_1^2\\
&~~~~~~+2\left(\frac{(1+\beta_1)^2}{\beta_2}+\frac{1+\beta_1}{2}\right)\frac{1}{\underline{\lambda}_L}+3\bigg)\\
&m_{12}=\bigg(\frac{2(1+\beta_1)^2}{\beta_2\beta_0t_1\underline{\lambda}_L}+\frac{1+\beta_1}{\beta_0^2t_1^2\underline{\lambda}_L}+\frac{1}{\beta_2\beta_0t_1\underline{\lambda}_L^2}+\frac{1}{\beta_0^2t_1^2\underline{\lambda}_L^2}\\
&~~~~~~+\frac{3}{2}\bigg)L_f^2+L_f\\
&~~~~~~+4L_f^2\frac{1}{\beta_0t_1}(\frac{1}{\underline{\lambda}_L}(2+2\beta_1)+\frac{1}{2})\\
&~~~~~~+L_f^2\frac{1}{\beta_0t_1}(\beta_1^2+2\left(\frac{(1+\beta_1)^2}{\beta_2\beta_0t_1}+\frac{1+\beta_1}{2\beta_0^2t_1^2}\right)\frac{1}{\underline{\lambda}_L}+1)\\
&m_{13}=11+\frac{16}{\varphi_1}+2\\
&m_{14}=\frac{1}{\underline{\lambda}_L}(2+2\beta_1)+1\\
&m_{15}=\beta_2(6+\beta_1)\bar{\lambda}_Lr^2+14\beta_1^2\beta_2^2\bar{\lambda}_L^2r^2+\frac{2\beta_2^2}{\beta_0^2}+4\beta_2^2+1\\
&~~~~~~~~~+r^2((\frac{5\beta_2}{2}+\beta_1\beta_2)\bar{\lambda}_L\\
&~~~~~~~~~+(\frac{(\beta_2^2+\beta_1\beta_2^2)\bar{\lambda}_L}{2}+\frac{3\beta_1^2\beta_2^2\bar{\lambda}_L^2}{4}+\frac{\beta_2^2}{\beta_0^2}))\\
&m_{16}=(\frac{1+\beta_1}{\beta_0^2t_1^2\underline{\lambda}_L}+\frac{1}{\beta_0^2t_1^2\underline{\lambda}_L^2}+\frac{3}{2})L_f^2+L_f+\frac{m_{11}}{\beta_2\beta_0t_1}\\
&m_{17}=\frac{1}{\beta_2^2\underline{\lambda}_L}(2+2\beta_1)+\frac{1}{\beta_2^2}\\
&m_{18}=((\frac{m_{11}}{n}+m_{12})\sigma^2+m_{17}\bar{\sigma})\\
&m_{19}=\frac{1}{\beta_8}\min\{\frac{\bar{c}_4}{\eta_k},\frac{m_5}{2\eta_k},\frac{m_7}{2\eta_k},\frac{\nu}{4}\}\\
&\bar{c}_0=\max\{\frac{2m_{10}}{m_5},16m_{11}\}\\
    &\bar{c}_1=\max\{\frac{9}{2\underline{\lambda}_L}+1\}\\
    &\bar{c}_2=\min\bigg\{\frac{m_1}{m_2},\frac{m_3}{m_4},\frac{\sqrt{m_8^2+2m_7m_9}-m_8}{2m_9}\\
	&~~~~~~~~,\frac{\bar{\lambda}_L+\underline{\lambda}_L^{-1}+(4+\frac{8}{\varphi_1})\underline{\lambda}_L}{4}\\
    &~~~~~~~~,\frac{1}{(\frac{1+\beta_1}{32\underline{\lambda}_L}+\frac{1}{32\underline{\lambda}_L^2}+\frac{3}{64})L_f^2+32L_f}\bigg\}\\
    &\bar{c}_3=\max\bigg\{1+\frac{5}{4}L_f^2,\frac{4}{3\underline{\lambda}_L},4L_f\sqrt{\frac{4(1+\beta_1)}{\underline{\lambda}_L}+\frac{2}{\underline{\lambda}_L^2}},4\beta_2L_f\bigg\}\\
    &\bar{c}_4=m_1\beta_2-m_2\beta_2^2\\
    &\bar{c}_5=\max\{\frac{\bar{c}_0}{\beta_0},\frac{8L_f}{\nu \beta_2}(\frac{1}{\underline{\lambda}_L}(2+2\beta_1)+1),\frac{\bar{c}_3}{\beta_0},2\}
\end{align*}

\textbf{(i)} 
We first show the corresponding parameters are positive. From~$\gamma_k=\beta_1\omega_k$, $\beta_1>\frac{9}{2\underline{\lambda}_L}+1$, $\omega_k\geq\beta_0t_1\geq(1+\frac{5}{4}L_f^2)$, and $\eta_k=\frac{\beta_2}{\omega_k}$, one obtains that
\begin{align*}
    \eta_k\breve{\epsilon}_{1,k}\bk\geq m_1\beta_2\bk,~m_1>0.\addtag\label{eq:theo3w1}
\end{align*}
Similarly, since $\omega_k\geq(1+\frac{5}{4}L_f^2)>\max\{1,L_f\}$, we have
\begin{align*}
    \eta_k^2\breve{\epsilon}_{2,k}\bk\leq m_2\beta_2^2\bk.\addtag\label{eq:theo3w2}
\end{align*}
From~$\gamma_k=\beta_1\omega_k$, $\beta_1>\frac{9}{2\underline{\lambda}_L}+1$, $\omega_k\geq\beta_0t_1\geq(1+\frac{5}{4}L_f^2)$, and $\eta_k=\frac{\beta_2}{\omega_k}$, one obtains that
\begin{align*}
    \eta_k\breve{\epsilon}_{3,k}\geq m_3\beta_2\bk, m_3>0.\addtag\label{eq:theo3w3}
\end{align*}
Similarly, since $\omega_k\geq(1+\frac{5}{4}L_f^2)>\max\{1,L_f\}$, we have
\begin{align*}
    &\eta_k^2\breve{\epsilon}_{4,k}\bk\leq m_4\beta_2^2\bk,\addtag\label{eq:theo3w4}
\end{align*}
From $\gamma_k=\beta_1\omega_k$, $\omega_k\geq\frac{4}{3\underline{\lambda}_L}$ and $\omega_k\geq(1+\frac{5}{4}L_f^2)>\max\{1,L_f\}$, we have
\begin{align*}
    \breve{\epsilon}_{5,k}\geq\breve{\epsilon}_{5,k}^0,\addtag\label{eq:theo3w5}
\end{align*}
where $\breve{\epsilon}_{5,k}^0=m_5-b_km_{10}$.
Since $\omega_k\geq4L_f\sqrt{\frac{4(1+\beta_1)}{\underline{\lambda}_L}+\frac{2}{\underline{\lambda}_L^2}}$, $w_k>1$, and $\beta_2<\frac{1}{(\frac{1+\beta_1}{32\underline{\lambda}_L}+\frac{1}{32\underline{\lambda}_L^2}+\frac{3}{64})L_f^2+32L_f}$, we have
\begin{align*}
    \breve{\epsilon}_{6,k}\geq\breve{\epsilon}_{6,k}^0,\addtag\label{eq:theo3w6}
\end{align*}
where $\breve{\epsilon}_{6,k}^0=\frac{1}{16}-m_6$.
From~$\omega_k=\beta_0(k+t_1)$, we have
\begin{align*}
    b_k&=\frac{1}{\omega_k}-\frac{1}{\omega_{k+1}}=\frac{1}{\beta_0}(\frac{1}{k+t_1}-\frac{1}{k+t_1+1})\\
    &=\frac{1}{\beta_0(k+t_1)(k+t_1+1)}\leq\frac{\beta_0}{\omega_k^2}.\addtag\label{eq:theo3w7}
\end{align*}
Since~$\gamma_k=\beta_1\omega_k$,~$\omega_k>1$, $b_k=\frac{1}{\omega_k}-\frac{1}{\omega_{k+1}}<\frac{1}{\omega_k}<1$ and $\omega_k\geq(1+\frac{5}{4}L_f^2)>\max\{1,L_f\}$, we have
\begin{align*}
    \breve{\epsilon}_{9,k}-\eta_k\breve{\epsilon}_{10,k}-&\eta^2_k\breve{\epsilon}_{11,k}\geq m_7-\beta_2m_8-\beta_2^2m_9,~m_9>0.\addtag\label{eq:theo3w8}
\end{align*}
Since $\beta_2<\bar{c}_2=\max\bigg\{\frac{m_1}{m_2},\frac{m_3}{m_4},\frac{\sqrt{m_8^2+2m_7m_9}-m_8}{2m_9},$ $\frac{\bar{\lambda}_L+\underline{\lambda}_L^{-1}+(4+\frac{8}{\varphi_1})\underline{\lambda}_L}{4}\bigg\}$,~\eqref{eq:theo3w1}--\eqref{eq:theo3w4}, and~\eqref{eq:theo3w7}, we have
\begin{align*}
    &\bar{c}_4>0,\addtag\label{eq:theo3w9}\\
    &m_3\beta_2-m_4\beta_2^2>0,\addtag\label{eq:theo3w10}\\
    &m_7-\beta_2m_8-\beta_2^2m_9>\frac{m_7}{2}>0,\addtag\label{eq:theo3w11}\\
    &m_5>0\addtag\label{eq:theo3w12}.
\end{align*}
From~\eqref{eq:theo3w5},~\eqref{eq:theo3w7},~\eqref{eq:theo3w12}, and $\beta_0>\bar{c}_0>\frac{2m_{10}}{m_5t_1}$, we have
\begin{align*}
    \breve{\epsilon}_{5,k}^0\geq m_5-\frac{m_{10}}{\beta_0t_1^2}\geq m_5-\frac{m_{10}}{\beta_0t_1}\geq\frac{1}{2}m_5>0.\addtag\label{eq:theo3w13}
\end{align*}
Furthermore, since $\omega_k>1$, $b_k=\frac{1}{\omega_k}-\frac{1}{\omega_{k+1}}<\frac{1}{\omega_k}<1$, it holds that $b_k^2<b_k$. Then from~\eqref{eq:theo3w6},~\eqref{eq:theo3w7} and~$\beta_0>\bar{c}_0>\frac{2m_{10}}{m_5t_1}$, we have
\begin{align*}
     \breve{\epsilon}_{6,k}^0\geq\frac{1}{16}-\frac{m_{11}}{\beta_0t_1}>0.\addtag\label{eq:theo3w14}
\end{align*}
Since $t_1>\bar{c}_5>2$, then we have $\eta_k=\beta_2/\omega_k=\beta_2/\beta_0(k+t_1)<\beta_2/\beta_0$. From $\omega_k=\beta_0(k+t_1)>\beta_0t_1$ and $b_k<1$, we have
\begin{align*}
    &\breve{\epsilon}_{7,k}<m_{11},\addtag\label{eq:theo3w15}\\
    &\breve{\epsilon}_{8,k}<m_{12},\addtag\label{eq:theo3w16}\\
    &\breve{\epsilon}_{12,k}<m_{14}.\addtag\label{eq:theo3w17}\\
    &\breve{\epsilon}_{13,k}<m_{15}.\addtag\label{eq:theo3w18}
\end{align*}
From~\eqref{eq:upperofV24},~\eqref{eq:theo3w1}--\eqref{eq:theo3w6} and~~\eqref{eq:theo3w8}--\eqref{eq:theo3w18}, we have
\begin{align*}
     \E_{\xi_k}[V_{k+1}]&\leq V_k-\|\x_k\|_{\bar{c}_4\bk}-\frac{m_5}{2}\Vert\vv_k+\frac{1}{\omega_k}\g_k^b\Vert^2_{\bp}\\
     &~~~-\frac{m_7}{2}\|\x_k-\x_k^c\|^2\\
 &~~~-\frac{\eta_k}{4}\|\bg_k^b\|^2+(m_{11}+m_{12}n)\sigma^2\eta_k^2\\
 &~~~+\eta_k^2m_{14}(2L_fV_{4,k}+n\bar{\sigma})+m_{15}h_k^2\sigma_{\mathcal{C}},\addtag\label{eq:upperofV31}
    \end{align*}
    Similarly, we have
    \begin{align*}
     \E_{\xi_k}[\breve{V}_{k+1}]&\leq \breve{V}_k-\|\x_k\|_{\bar{c}_4\bk}-\frac{m_5}{2}\Vert\vv_k+\frac{1}{\omega_k}\g_k^b\Vert^2_{\bp}\\
      &~~~-\frac{m_7}{2}\|\x_k-\x_k^c\|^2\\
 &~~~+m_{16}\eta_k^2\|\bg_k\|^2+(m_{11}+m_{12}n)\sigma^2\eta_k^2\\
 &~~~+\eta_k^2m_{14}(2L_fV_{4,k}+n\bar{\sigma})+m_{15}h_k^2\sigma_{\mathcal{C}}.\addtag\label{eq:upperofV32}
    \end{align*}
    
With respect to $\|\bg_k^b\|^2$, we have
\begin{align*}
    \|\bg_k\|^2&=\|\bg_k-\bg_k^b+\bg_k^b\|^2\\
    &\leq 2\|\bg_k-\bg_k^b\|^2+2\|\bg_k^b\|^2\leq2L_f^2\|\x_k\|_\bk^2+2\|\bg_k^b\|^2,\addtag\label{eq:upperofV33}
\end{align*}
the first inequality comes from~Cauchy–Schwarz inequality and the first inequality comes from~\eqref{eq:propertyofbg2}. From~\eqref{eq:lsmooth} and Cauchy–Schwarz inequality, we have
\begin{align*}
\|\bg_k^b\|^2&=n\|\lf_i(\bar{x}_k)\|^2\leq2nL_f(f(x)-f^*)\\
    &=2nL_f(f(x)-f^*)=2L_f V_{4,k},\addtag\label{eq:upperofV34}
\end{align*}
Then combining~\eqref{eq:upperofV32}--\eqref{eq:upperofV34}, we have
 \begin{align*}
     \E_{\xi_k}[\breve{V}_{k+1}]&\leq \breve{V}_k-\|\x_k\|_{\bar{c}_4\bk}-\frac{m_5}{2}\Vert\vv_k+\frac{1}{\omega_k}\g_k^b\Vert^2_{\bp}\\
      &~~~-\frac{m_7}{2}\|\x_k-\x_k^c\|^2\\
 &~~~+2m_{16}\eta_k^2L_f^2\|\x_k\|_\bk^2+(m_{11}+m_{12}n)\sigma^2\eta^2\\
 &~~~+2L_f\eta_k^2(2m_{16}+m_{17})V_{4,k}+\eta_k^2m_{17}n\bar{\sigma},\addtag\label{eq:upperofV35}
    \end{align*}
From~\eqref{eq:lyapunovofv4} and~\eqref{eq:upperofV11}, we have
\begin{align*}
    \E_{\xi_k}[V_{4,k+1}]&\leq V_{4,k}-\frac{\eta_k}{4}\|\bg_k\|^2+\frac{\eta_kL_f^2}{2}\|\x_k\|_\bk^2-\frac{\eta_k}{4}\|\bg_k^b\|^2\\
&~~~+\eta_k^2L_f(\sigma^2+\|\bg_k\|^2)\\
    &\leq V_{4,k}-\frac{\eta_k}{4}\|\bg_k^b\|^2+\frac{\eta_kL_f^2}{2}\|\x_k\|_\bk^2+\eta_k^2L_f\sigma^2,\addtag\label{eq:upperofV36}
\end{align*}
the last inequality holds due to $\eta_k=\frac{\beta_2}{\omega_k}$ and $\omega_k>4\beta_2L_f$.

\textbf{(ii)} From~\eqref{upperofV21} and~\eqref{eq:upperofV31}, we have
\begin{align*}
     \E_{\xi_k}[V_{k+1}]&\leq V_k-\|\x_k\|_{\bar{c}_4\bk}-\frac{m_5}{2}\Vert\vv_k+\frac{1}{\omega_k}\g_k^b\Vert^2_{\bp}\\
      &~~~-\frac{m_7}{2}\|\x_k-\x_k^c\|^2\\
 &~~~-\frac{\eta_k\nu}{2}V_{4,k}+(m_{11}+m_{12}n)\sigma^2\eta_k^2\\
 &~~~+\eta_k^2m_{14}(2L_fV_{4,k}+n\bar{\sigma})+m_{15}h_k^2\sigma_{\mathcal{C}}\\
 &\leq V_k-\|\x_k\|_{\bar{c}_4\bk}-\frac{m_5}{2}\Vert\vv_k+\frac{1}{\omega_k}\g_k^b\Vert^2_{\bp}+m_{18}n\eta_k^2\\
  &~~~-\frac{m_7}{2}\|\x_k-\x_k^c\|^2\\
 &~~~-2(\frac{1}{4}-\frac{1}{\nu}L_fm_{14}\eta_k)\nu\eta_kV_{4,k}+m_{15}h_k^2\sigma_{\mathcal{C}}.\addtag\label{eq:upperofV41}
\end{align*}
From~\eqref{eq:theo3w12}, $\omega_k>\beta_0t_1$, and $t_1>\bar{c}_5>\frac{8L_f}{\nu \beta_2}(\frac{1}{\underline{\lambda}_L}(2+2\beta_1)+1)$, we have
\begin{align*}
        \frac{1}{4}-&\frac{1}{\nu}L_fm_{14}\eta_k=\frac{1}{4}-\frac{1}{\nu}L_f(\frac{\beta_0}{\beta_2\omega_k\underline{\lambda}_L}(2+2\beta_1)+\frac{\beta_0}{\beta_2\omega_k})\\
        &\geq\frac{1}{4}-\frac{1}{\nu}L_f(\frac{1}{\beta_2t_1\underline{\lambda}_L}(2+2\beta_1)+\frac{1}{\beta_2t_1})\geq \frac{1}{8}.\addtag\label{eq:upperofV42}
\end{align*}
From~\eqref{eq:lowerofw},~\eqref{upperofV2}, and~\eqref{eq:upperofV41}, it holds that
\begin{align*}
     \E_{\xi_k}[V_{k+1}]&\leq V_k-\frac{\eta_k}{\beta_8}\min\{\frac{\bar{c}_4}{\eta_k},\frac{m_5}{2\eta_k},\frac{m_7}{2\eta_k},\frac{\nu}{4}\}V_k+m_{18}n\eta_k^2\\
     &~~~+m_{15}h_k^2\sigma_{\mathcal{C}}\\
     &\leq V_k-m_{19}\eta_kV_k+m_{18}n\eta_k^2+m_{15}h_k^2\sigma_{\mathcal{C}}.\addtag\label{eq:upperofV43}
\end{align*}
Denote $z_k=\E[V_{k}], r_{1,k}=m_{19}\eta_k$ and $r_{2,k}=m_{18}n\eta_k^2+m_{15}h_k^2\sigma_{\mathcal{C}}$, from~\eqref{eq:upperofV43} we have
\begin{align*}
    z_{k+1}\leq(1-r_{1,k})z_k+r_{2,k},\forall k\in\mathbb{N}.\addtag\label{eq:upperofV44}
\end{align*}
Since $h_k=h_0^k$ and $h_0\in(0,\frac{1}{t_1})$, we have $  h_0^k<\frac{1}{t_1^k}$. It is easy to know that $h_0^0<\frac{1}{t_1}$ when $k=0$. Suppose for any $k\in\mathbb{N}$, $h_0^k<\frac{1}{k+t_1}$ holds. Due to $t_1>\bar{c}_5>2$, we have
\begin{align*}
    h_0^{k+1}<\frac{1}{(k+t_1)t_1}<\frac{1}{2(k+t_1)}<\frac{1}{k+1+t_1},
\end{align*}
which means $h_0^k<\frac{1}{k+t_1},\forall k\in\mathbb{N}$ holds. From~$\omega_k=\beta_0(k+t_1)$ and $\eta_k=\frac{\beta_2}{\omega_k}$, one obtains that
\begin{align*}
   & r_{1,k}=\eta_km_{19}=\frac{a_1}{k+t_1}\addtag\label{eq:upperofV45}\\
   &r_{2,k}=m_{18}n\eta_k^2+m_{15}h_k^2\sigma_{\mathcal{C}}<\frac{a_2}{(k+t_1)^2}\addtag\label{eq:upperofV46}
\end{align*}
where $a_1=\beta_2 m_{19}/\beta_0$ and $a_2=n\beta_2^2m_{18}/\beta_0^2+m_{15}\sigma_{\mathcal{C}}$.
Since $\varphi_1<1$ and $\beta_1>1$, we have
\begin{align*}
   r_{1,k}\leq\frac{m_7}{2\beta_8}\leq\frac{\frac{\varphi_1}{2}+\frac{\varphi_1^2}{2}}{2(\frac{1}{2}+\beta_1)}\leq\frac{1}{3}\addtag\label{eq:upperofV47}
\end{align*}
From~\eqref{eq:upperofV44}--\eqref{eq:upperofV47} and~\eqref{eq:lemma61}, we have
\begin{align*}
    z_k \leq \phi_1\left(k, t_1, a_1, a_2, z_0\right), \forall k \in \mathbb{N}_+\addtag\label{eq:upperofV48}
\end{align*}
From~$\beta_0\geq \tilde{c}\nu\eta_2/4$ and $a_4=\mathcal{O}(n)$, we have
\begin{align*}
    \phi_1\left(k, t_1, a_1, a_2, z_0\right)= \begin{cases}\mathcal{O}\left(\frac{n}{k}\right), & \text { if } a_1>1, \\ \mathcal{O}\left(\frac{n \ln (k-1)}{k}\right), & \text { if } a_1=1, \\ \mathcal{O}\left(\frac{n}{k^{a_1}}\right), & \text { if } a_1<1,\end{cases}\addtag\label{eq:upperofV49}
\end{align*}
Since~\eqref{eq:proveoftheo21}, from~\eqref{eq:upperofV48} and~\eqref{eq:upperofV49}, one obtains that
\begin{align*}
    \|\x_k\|_\bk^2+V_{4,k}\leq n\bar{m}_3,\addtag\label{eq:upperofV50}
\end{align*}
for some constants $\bar{m}_3>0$.

Since~$\omega_k=\beta_0(k+t_1)$ and $\eta_k=\frac{\beta_2}{\omega_k}$, from~\eqref{eq:upperofV35},~\eqref{eq:lowerofw},~\eqref{upperofV2} and~\eqref{eq:upperofV50},  one obtains that
\begin{align*}
    \breve{z}_{k+1}\leq(1-a_3)\breve{z}_{k}+\frac{a_4}{(k+t_1)^2}\addtag\label{eq:upperofV51}
\end{align*}
where $\breve{z}_{k}=\E_{\xi_k}[\breve{V}_{k+1}]$, $a_3=\frac{1}{\beta_8}\min\{\bar{c}_4,\frac{m_5}{2},\frac{m_7}{2}\}$ and $a_4=n(m_{16}L_f^2\bar{m}_3+2L_f(2m_{16}+m_{17})\bar{m}_3+m_{18})\beta_2^2/\beta_0+m_{15}\sigma_{\mathcal{C}}$.

From~\eqref{eq:theo3w9}--\eqref{eq:theo3w12}, and~\eqref{eq:upperofV47}, it holds that
\begin{align*}
    0<a_3<1~\text{and}~a_4>0.\addtag\label{eq:upperofV52}
\end{align*}

From~\eqref{eq:upperofV51}, \eqref{eq:upperofV52} and~\eqref{eq:lemma63}, we have
\begin{align*}
    \breve{z}_k \leq \phi_2\left(k, t_1, a_3, a_4, z_0\right)=\mathcal{O}(\frac{n}{k^2})\addtag\label{eq:upperofV53}
\end{align*}

From~~\eqref{eq:lowerofw},~\eqref{upperofV2} and~\eqref{eq:upperofV53}, we have
\begin{align*}
    \E[\|\x_k\|_\bk^2]\leq\frac{1}{\check{c}_1}\phi_2\left(k, t_1, a_3, a_4, z_0\right)=\mathcal{O}(\frac{n}{k^2}).\addtag\label{eq:upperofV54}
\end{align*}
Then we have~\eqref{eq:convergenceofth31}.

From~\eqref{upperofV21} and~\eqref{eq:upperofV36}, we have
\begin{align*}
    \E[V_{4,k+1}]&\leq(1-\frac{\nu}{2}\eta_k)V_{4,k}+\frac{\eta_kL_f^2}{2}\|\x_k\|_\bk^2+\eta_k^2L_f\sigma^2\\
    &\leq (1-\frac{\nu\beta_2}{2\beta_0(k+t_1)})V_{4,k}+\mathcal{O}(\frac{n}{k^2})+\frac{\beta_2^2L_f\sigma^2}{\beta_0^2(k+t_1^2)}\addtag\label{eq:upperofV55}
\end{align*}
Thus, from~\eqref{eq:upperofV55}, the proof can be completed in the same way as the proof of~\cite[Lemma 5]{yi2022primal}.

\section{The proof of Theorem~\ref{theo:privacy1}}\label{app-privacy1}

Since the stochastic compressor $\mathcal{C}'(\cdot)$ satisfies Assumption~\ref{as:compressor}, it is straightforward to see from Appendices~\ref{app-convergence0} and~\ref{app-convergence1} that replacing the parameters $\varphi$ and $\sigma_{\mathcal{C}}$ with $\varphi(1-q)$ and $(1-q)\sigma_{\mathcal{C}}$, respectively, still guarantees the convergence of the RCP-SGD algorithm. To this end, we only need to prove the privacy under RCP-SGD. From Definition~\ref{def:differentialprivacy}, for any two adjacent sampled datasets $\mathcal{S}_i^{(1)}$ and $\mathcal{S}_i^{(2)}$, the $(0,\delta)$-differential privacy is achieved if
\begin{align*}
    \mathbb{P}\{\mathbb{M}(\mathcal{S}^{(1)},x_{-i})\in\mathcal{H}_i\}\leq \mathbb{P}\{\mathbb{M}(\mathcal{S}^{(2)},x_{-i})\in\mathcal{H}_i\}+\delta.
\end{align*}
We further denote $x_{-i,t}$ and $\mathcal{H}_{i,t}$ as the input and observation at time step $t$. Denote events $E_t^{(j)}=\{\mathbb{M}(\mathcal{S}_i^{(j)},x_{-i,t})\in\mathcal{H}_{i,t}\},~j=1,2,~\forall t\in\mathbb{N}$. Then the above inequality can be rewritten as
\begin{align*}
 \mathbb{P}\{\cup_{t=0}^\infty E_t^{(1)}\}\leq  \mathbb{P}\{\cup_{t=0}^\infty E_t^{(2)}\}+\delta.\addtag\label{eqn:privacy1}
\end{align*}
From the Definition~\ref{def:adjacency}, it can be observed that $\mathcal{S}^{(1)}_i$ and $\mathcal{S}^{(2)}_i$ only differ at a specific time step $k$, then we have
\begin{align*}
	\mathbb{P}\{\cup_{t=0,m\neq k}^\infty E_t^{(1)}\}=\mathbb{P}\{\cup_{t=0,t\neq k}^\infty E_t^{(2)}\}.\addtag\label{eqn:privacy2}
\end{align*}
By the law of conditional probability, Equation \eqref{eqn:privacy2} can be rewritten as
\begin{align*}
	p_1 \mathbb{P}\{\mathbb{M}(\mathcal{S}_i^{(1)},x_{-i,k})&\in\mathcal{H}_{i,k}\}\leq\\
	&p_1\mathbb{P}\{\mathbb{M}(\mathcal{S}_i^{(2)},x_{-i,k})\in\mathcal{H}_{i,k}\}+\delta.
\end{align*}
where $p_1=\mathbb{P}\{\cup_{m=0,m\neq k}^\infty E_m^{(1)}\}=\mathbb{P}\{\cup_{m=0,m\neq k}^\infty E_m^{(2)}\}$. From Definition~\ref{def:privacycom}, we have
\begin{align*}
	\sup_{\mathcal{H}_{i,k}}\mathbb{P}\{\mathbb{M}(\mathcal{S}_i^{(1)},x_{-i,k})\in\mathcal{H}_{i,k}\}-\mathbb{P}\{\mathbb{M}(&\mathcal{S}_i^{(2)},x_{-i,k})\in\mathcal{H}_{i,k}\}\\
	&\leq 1-\delta.
\end{align*}
 Consequently, it follows that $\delta\leq p_1(1-q)\leq 1-q$, which completes the proof.

\bibliographystyle{IEEEtran}
\bibliography{ref_Antai}

\end{document}